\newcommand{\RR}{\mathbb{R}}
\newcommand{\ZZ}{\mathbb{Z}}
\newcommand{\ZZp}{\ZZ_{>0}}
\newcommand{\ZZnn}{\ZZ_{\geq 0}}
\newcommand{\bbE}{\mathbb{E}}
\newcommand{\Ind}{\mathbb{I}}
\newcommand{\bone}{\boldsymbol{1}}
\newcommand{\cE}{\mathcal{E}}
\newcommand{\cG}{\mathcal{G}}
\newcommand{\cN}{\mathcal{N}}
\newcommand{\cQ}{\mathcal{Q}}
\newcommand{\cS}{\mathcal{S}}
\newcommand{\cT}{\mathcal{T}}
\newcommand{\cV}{\mathcal{V}}
\newcommand{\res}{r}
\newcommand{\sfL}{\mathsf{L}}
\newcommand{\sfR}{\mathsf{R}}
\newcommand{\whsfR}{\widehat{\sfR}}
\newcommand{\sfT}{\mathsf{T}}
\newcommand{\tr}{\mathrm{tr}}
\newcommand{\diag}{\mathrm{diag}}
\newcommand{\avg}{\mathrm{avg}}
\newcommand{\Var}{\mathrm{Var}}
\newcommand{\htaudim}{\hat{\tau}_{\mathrm{DiM}}}
\newcommand{\hresdim}{\hat{\tau}^{\mathrm{res}}_{\mathrm{DiM}}}
\newcommand{\htauols}{\hat{\tau}_{\mathrm{OLS}}}
\newcommand{\opnorm}[1]{\left\lVert #1 \right\rVert_{\mathrm{op}}}
\newcommand{\arm}{\alpha}
\newcommand{\Sone}{\cS_1}
\newcommand{\Szero}{\cS_0}
\newcommand{\Sa}{\cS_{\arm}}
\newcommand{\na}{n_{\arm}}
\newcommand{\Xa}{X_{\arm}}
\newcommand{\ya}{y_{\arm}}
\newcommand{\Ca}{C_{\arm}}
\newcommand{\xbara}{\bar{x}_{\arm}}
\newcommand{\ybara}{\bar{y}_{\arm}}
\newcommand{\onea}{\boldsymbol{1}_{\na}}
\newcommand{\Part}{\Pi}
\newcommand{\all}{\mathrm{all}}
\newcommand{\disc}{\mathsf{disc}}
\DeclareMathOperator{\Mob}{Mob}
\newcommand{\uni}{\mathrm{uni}}
\newcommand{\Unif}{\mathrm{Unif}}
\newcommand{\degree}{\mathsf{deg}}
\newcommand{\DeltaJK}{\Delta^{(J\leftrightarrow K)}}
\newcommand{\CompFold}[1]{\breve{\Phi}_{#1}}
\newcommand{\init}{\mathrm{init}}
\theoremstyle{plain}
\newtheorem{theorem}{Theorem}
\newtheorem{proposition}[theorem]{Proposition}
\newtheorem{lemma}{Lemma}[section]
\newtheorem{corollary}[theorem]{Corollary}
\newtheorem{definition}{Definition}[section]
\newtheorem{example}{Example}
\newtheorem{assumption}{Assumption}
\theoremstyle{remark}
\newtheorem{remark}{Remark}[section]
\title{Neumann-series corrections for regression adjustment\\in randomized experiments}
\author{Dogyoon Song\thanks{Email: \texttt{dgsong@ucdavis.edu}}}
\affil{Department of Statistics, University of California, Davis}
\date{\today}
\begin{document}

\maketitle

\begin{abstract}
    We study average treatment effect (ATE) estimation under complete randomization with many covariates in a design-based, finite-population framework. 
    In randomized experiments, regression adjustment can improve precision of estimators using covariates, without requiring a correctly specified outcome model. 
    However, existing design-based analyses establish asymptotic normality only up to $p = o(n^{1/2})$, extendable to $p = o(n^{2/3})$ with a single de-biasing. 
    We introduce a novel theoretical perspective on the asymptotic properties of regression adjustment through a Neumann‑series decomposition, yielding a systematic higher-degree corrections and a refined analysis of regression adjustment. 
    Specifically, for ordinary least squares regression adjustment, the Neumann expansion sharpens analysis of the remainder term, relative to the residual difference-in-means. 
    Under mild leverage regularity, we show that the degree-$d$ Neumann-corrected estimator is asymptotically normal whenever $p^{ d+3}(\log p)^{ d+1}=o(n^{ d+2})$, strictly enlarging the admissible growth of $p$.
    The analysis is purely randomization-based and does not impose any parametric outcome models or super-population assumptions.
\end{abstract}

\newpage
\tableofcontents
\newpage

\section{Introduction}

Estimating causal effects is a fundamental problem, and randomized experiments remain among the most powerful tools for causal inference across scientific domains; see, e.g., \citet{fisher1935design, neyman, kempthorne1952design} for foundational developments and \citet{imbens2015causal, rosenberger2015randomization, ding2024first} for modern treatments.
In a treatment–control experiment, each unit $i\in [n]$ receives treatment or control per the design, and the average treatment effect (ATE) $\tau$ is often the primary estimand.
Statistical inference for randomized experiments typically proceeds from one of two perspectives.
Following \citet{neyman}, the finite-population view treats the two potential outcomes $\{ (y^{(1)}_i, y^{(0)}_i)\}_{i=1}^n$ as fixed, with randomness arising solely from treatment assignment \citep{aronow2014sharp, dasgupta2015causal,fogarty2018regression,mukerjee2018using, li2020rerandomization}.
By contrast, the superpopulation view assumes units are i.i.d. draws from a hypothetical infinite population \citep{tsiatis2008covariate, cattaneo2018inference}.
While both are widely used, theoretical developments under the finite-population framework—especially in modern, complex settings—remain less explored.
We adopt the design-based, finite-population perspective to foreground the role of randomization itself, without positing outcome‑generating models.

A baseline estimator of the ATE is the difference-in-means (DiM) estimator $\hat\tau_{\mathrm{DiM}}$, which is unbiased with an explicit variance formula \citep{neyman}.
Experiments often collect pre-treatment covariates; when these are predictive of outcomes, incorporating them can improve precision.
Let $x_i\in\RR^p$ be covariates.
Early work on analysis of covariance often assumed a constant treatment effect, under which a common approach is OLS of $Y_i$ on $(T_i,x_i)$ with a treatment indicator $T_i$ \citep{fisher1935design,kempthorne1952design,hinkelmann2007design}.
\citet{freedman2008regression} cautioned that (i) regression adjustment can be less efficient than DiM under heterogeneity; (ii) usual OLS standard errors can be inconsistent; and (iii) the OLS estimator has $O_p(n^{-1})$ bias.
These issues are resolved in fixed dimensions: \citet{lin_ols2013} recommends including all treatment–covariate interactions (equivalently, two arm‑wise OLS fits) and shows the resulting estimator is consistent, asymptotically normal, and never less efficient than DiM; \citet{chang2024exact} provide exact bias corrections with and without interaction.

Modern experiments often involve many covariates; with large $p$, fixed‑$p$ asymptotics may fail to capture the behavior of OLS adjustment. 
Recently, \citet{lei2021regression} extend the analysis of \citet{lin_ols2013}, establishing asymptotic normality when $p=o(n^{1/2})$, and then up to $p=o(n^{2/3})$ after a single degree‑0 correction (both ignoring logarithms, under balanced designs).
\citet{chiang2023regression} show that asymptotic normality can hold up to $p=o(n^{3/4})$ via cross‑fitting. 
Refinements based on lasso adjustments or related high-dimensional methods deliver guarantees under sparsity, semiparametric structure, or superpopulation modeling \citep{bloniarz2016lasso, wager2016high, chernozhukov2018double}. 
Nevertheless, to the best of our knowledge, a \emph{design-based}, finite-population theory for regression adjustment in high-dimensional setting, or even in the intermediate growing-$p$ regime with $p = o_n(n)$ that avoids ad hoc assumptions or techniques (e.g., sparsity, cross-fitting) remains limited.

\subsection{Contributions}
We study regression adjustment under the design-based, finite‑population perspective when $p<n$ but $p$ grows with $n$. 
We focus on OLS‑based regression adjustment (OLS-RA), which is widely used in practice for its simplicity and the lack of any ad hoc tuning parameters, to investigate fundamental properties of regression adjustment. 
Our contributions in this paper are three‑fold.

\begin{enumerate}
    \item 
    \emph{Neumann-series correction} (Section \ref{sec:methods}).  
    We introduce an \emph{arm‑wise} Neumann–series expansion of the centered Gram inverse that appears in the remainder term of the OLS-adjusted estimator (relative to the DiM of population residuals); see \eqref{eq:master-decomp} and \eqref{eqn:neumann_remainder}. 
    For a user‑chosen $d \geq 0$, we replace the random inverse by its Neumann series truncated at degree $d$. 
    We show each degree-$d'$ term has design expectation equal to the inner product between the \emph{Neumann weights} (Definition \ref{defn:neumann_weight}) and the population OLS residuals (Proposition \ref{prop:neumann_exp}). 
    By sample-analog estimation, this yields a drop‑in linear‑algebraic modification to the vanilla OLS-RA estimator that adds correction terms $\whsfR_\arm^{[d']}$ ($d' = 0, \dots, d$), computable from arm‑wise OLS fits; see \eqref{eqn:Neumann_k_sample}. 

    \item 
    \emph{Computation of Neumann weights} (Section \ref{sec:design_expectation}). 
    We derive a closed-form expression for the Neumann weight $\xi^{[d']}$, cf. \eqref{eqn:xi_weight}, by evaluating the \emph{design expectation} of the degree‑$d'$ Neumann term $\sfR^{[d']}_{\arm}$ under complete randomization. 
    To this end, we develop a finite‑population counting argument that yields a closed‑form for $\bbE[\sfR^{[d']}_{\arm}]$ (Theorem \ref{thm:xi_formula}). 
    We also describe algorithms to compute the Neumann weights (Appendix \ref{sec:computation_expectation}) used in our experiments. 
    
    \item 
    \emph{Finite-population asymptotics under growing $p$} (Section \ref{sec:theory-OLS}). 
    We prove consistency and the rates of convergence for the Neumann‑corrected estimator (Theorem \ref{thm:main-OLS}). 
    Further, we establish asymptotic normality for any fixed degree $d\ge 0$ under the scaling (ignoring logarithms)
    \[
        p^{d+3}(\log p)^{d+1}=o\!\left(n^{d+2}\right)\quad\Rightarrow\quad p=o\!\left(n^{(d+2)/(d+3)}\right),
    \]
    strictly extending \citet{lei2021regression} (Corollary \ref{cor:diffuse}).
    Intuitively, each additional degree cancels one more layer of the random‑inverse fluctuation, enlarging the admissible $p$‑regime.

    \item 
    \emph{Empirical validation} (Section \ref{sec:experiments}). 
    Through controlled finite‑population simulations that vary dimensionality ($p=\lfloor n^\gamma\rfloor$, $\gamma\in[0,0.7]$), covariate distributions (Gaussian and heavy‑tailed), and an adversarial residual configuration aligned with leverage, we show that small Neumann degrees ($d \in \{1, 2, 3\}$) markedly reduce design bias with modest increase in variance relative to plain OLS‑RA and DiM.
\end{enumerate}

\subsection{Related work}

We situate our results within the design-based literature on regression adjustment for randomized experiments. 
In fixed dimensions, \citet{freedman2008regression} highlighted pitfalls of regression adjustment (efficiency loss under heterogeneity, inconsistent OLS SEs, $O_p(n^{-1})$ bias), while \citet{lin_ols2013} advocated fitting all treatment--covariate interactions (equivalently, arm-wise OLS with an intercept) to achieve asymptotic normality and non-inferiority to the difference-in-means (DiM) baseline. 
Recent refinements provide exact bias corrections and sharpen inference under complete randomization \citep[e.g.,][]{chang2024exact, fogarty2018regression}.

For growing covariate dimension, \citet{lei2021regression} established design-based asymptotic normality of OLS-RA up to $p=o(n^{1/2})$ (ignoring logs) and, with a single degree-$0$ de-biasing, up to $p=o(n^{2/3})$ in balanced designs. 
Cross‑fitting can push the regime further \citep[e.g.,][]{chiang2023regression}, though such techniques often blend design‑based reasoning with model-assisted tools. 
Our contribution is complementary: we retain the plain OLS-RA estimator within the finite-population framework and provide a systematic, degree-$d$ Neumann correction that progressively controls the random inverse, enlarging the admissible $p$-growth regime.

A parallel line of work develops high-dimensional or model-assisted adjustments using modern learning methods. 
Lasso- or ML-based procedures can improve precision under sparsity or super-population assumptions \citep{bloniarz2016lasso, wager2016high, chernozhukov2018double}, often aided by sample splitting/cross-fitting. 
These methods modify the estimator and/or posit outcome models, whereas our analysis keeps the estimator as plain OLS-RA and relies solely on the randomization law.

A complementary line of work studies the geometry and statistical behavior of the OLS interpolator in modern, potentially over‑parameterized regimes; see, e.g., \citet{shen2023algebraic}. 
Our results are purely design‑based and focus on $p<n$ with growing $p$, while the algebraic perspective in that line of work is closely aligned with our Neumann‑series viewpoint and suggests a path to $p\geq n$ via stabilized inverses (e.g., ridge or pseudoinverses). 
In this sense, the two strands are synergistic: our corrections control randomization‑induced Gram fluctuations for OLS‑RA, while the OLS‑interpolator identities can inform extensions of Neumann corrections beyond the classical regime.

Technically, our arguments draw on concentration and limit theorems for sampling without replacement and spectral tools for randomization-induced perturbations. 
We use scalar/matrix Bernstein-type bounds adapted to SRSWOR \citep{hoeffding1963probability, gross2010note, bardenet2015concentration} and invoke the finite-population CLT \citep{hajek1960limiting}. 
A Poincar\'e (spectral-gap) inequality on the Johnson graph controls the variance of correction-estimation errors under single row-swaps \citep{levin2017markov, brouwer2011spectra}, enabling $o_P(n^{-1/2})$ remainder control for any fixed degree.

\section{Setup and background}\label{sec:setup}

\paragraph{Notation.} 
Let $\RR$ denote the set of real numbers, and $\ZZ$ denote the set of integers. 
Let $\ZZnn \coloneqq \{ z \in \ZZ: z \geq 0\}$ and $\ZZ_{> 0} \coloneqq \{ z \in \ZZ: z > 0\}$.
For $n \in \ZZp$, write $[n] \coloneqq \{1, \dots, n\}$. 
A vector $v \in \RR^n$ is identified with its column vector form $v \in \RR^{n \times 1}$. 
For a matrix $M$, $M^{\top}$ denotes transpose, and $M^{-1}$ the inverse (when it exists). 
For readability, we use calligraphic symbols to denote sets (e.g., $\cS$). 
We write $\bone_n \coloneqq (1, \cdots, 1) \in \RR^n$ and $\Ind_E$ to denote the indicator of event $E$.

\subsection{Average treatment effect estimation}
\paragraph{Finite population and potential outcomes.}
We consider a finite population of $n$ units. 
Each unit $i\in[n]$ has two potential outcomes $y_i^{(\arm)}$ for $\arm\in\{0,1\}$, where $y_i^{(1)}$ is the treated outcome and $y_i^{(0)}$ is the control outcome. 
Collect $y^{(\arm)}=(y_1^{(\arm)},\dots,y_n^{(\arm)})\in\RR^n$.

\paragraph{Randomized experiments.}
A completely randomized design selects a treatment set $\Sone \subset [n]$ of a fixed size $n_1$ uniformly at random; the control set is $\Szero \coloneqq [n] \setminus \Sone$, with $n_0 = |\Szero| = n - n_1$. 
Let $T_i \coloneqq \Ind\{ i \in \Sone \} \in \{0,1\}$ denote the treatment indicator. 
Then the observed outcome is
\[
    Y_i = T_i y_i^{(1)} + (1 - T_i) y_i^{(0)}. 
\]
Randomness arises \emph{only} from the treatment assignment. 

For each $\arm \in \{0,1\}$, let $S_{\arm} \in \{0,1\}^{n_{\arm} \times n}$ be the selection matrix such that $(S_{\arm})_{ij} = 1$ if and only if $j$ is the $i$-th smallest element in $\Sa$. 
Define $\ya=S_{\arm} y^{(\arm)}$, the \emph{observed} subvector of $y^{(\arm)}$ corresponding to $\Sa$.

\paragraph{Average treatment effect (ATE) estimation.}
The finite-population average treatment effect is
\[
    \tau = \frac{1}{n} \sum_{i=1}^n (y^{(1)}_i - y^{(0)}_i).
\]
The difference-in-means (DiM) estimator of ATE is
\[
    \hat\tau_{\mathrm{DiM}} = \frac{1}{n_1}\sum_{i\in\Sone}y_i^{(1)} - \frac{1}{n_0}\sum_{i\in\Szero}y_i^{(0)}.
\]

\subsection{Regression adjustment}\label{sec:reg_adjustment}
\paragraph{Covariates, design and normalization.}
Each unit $i$ carries pre-treatment covariates $x_i\in\RR^p$. 
The design matrix refers to $X=\begin{bmatrix}x_1&\cdots&x_n\end{bmatrix}^\top\in\RR^{n\times p}$. 

\setcounter{assumption}{-1}
\begin{assumption}\label{assump:covariates}
    Suppose that
    \begin{enumerate}
        \item 
        $\bone_n^\top X=0$, i.e., columns of $X$ are centered, and
        \item 
        $X^\top X=nI_p$, i.e., columns of $X$ are linearly independent and normalized.
    \end{enumerate}
\end{assumption}

For $\arm \in \{0,1\}$, let $(\mu_{\arm},\beta_{\arm})$ denote the \emph{population} ordinary least squares (OLS) coefficients from regressing $y^{(\arm)}$ on $[\bone_n,X]$, and define the population residuals
\begin{equation}\label{eqn:population_OLS}
    \res^{(\arm)}  \coloneqq  y^{(\arm)} - \mu_{\arm} \bone_n - X\beta_{\arm},
        \qquad \arm\in\{0,1\}.
\end{equation}
By OLS orthogonality, $\bone_n^\top \res^{(\arm)}=0$ and $X^\top \res^{(\arm)}=0$. 
Computing the population OLS coefficients $(\mu_{\arm},\beta_{\arm})$ is only a notional device to aid in our analysis; it requires access to the potential outcome $y^{(\arm)}$ for the entire population, but we only observe $\ya = S_{\arm} y^{(\arm)}$.

\paragraph{Arm-wise OLS with intercept.} 
For each $\arm \in \{0,1\}$, let $(\hat\mu_{\arm},\hat\beta_{\arm})$ be the arm-wise OLS coefficients obtained by fitting observed data:
\begin{equation*}
    (\hat\mu_{\arm},\hat\beta_{\arm}) \in \arg\min_{(\mu, \beta)} \left\{ \frac{1}{n_{\arm}} \sum_{i \in \Sa}\bigl( y^{(\arm)}_i - \mu - x_i^{\top} \beta  \bigr)^2 \right\}.
\end{equation*}
For each $\arm \in \{0,1\}$, define $\Xa=S_{\arm}X$ as the submatrix of $X$ that retains only the rows with indices in $\Sa$, and define arm-wise means 
\begin{equation}\label{eqn:armwise_means}
    \xbara\coloneqq \frac{1}{n_{\arm}} \Xa^\top\onea
    \qquad\text{and}\qquad
    \ybara\coloneqq \frac{1}{n_{\arm}} \onea^\top \ya.
\end{equation}
Define the arm-wise (row-) centering projection matrix $\Ca \coloneqq I_{n_{\arm}} - \frac{1}{n_{\arm}}\onea\onea^\top$ so $\Ca\onea = 0$.
Then $\Ca X_{\arm} = X_{\arm} - \onea \xbara^{\top}$, and the arm-wise sample covariance 
\begin{equation}\label{eqn:armwise_covariance}
    \Sigma_{\arm} 
        \coloneqq  \frac{1}{n_{\arm}}  \sum_{i \in \Sa} ( x_i - \xbara )( x_i - \xbara )^{\top}
        = \frac{1}{n_{\arm}} \Xa^\top \Ca \Xa .
\end{equation}
By standard algebra for OLS-with-intercept,
\begin{equation*}
    \hat\beta_{\arm} = \Sigma_{\arm}^{-1} \Bigl(\frac{1}{n_{\arm}}\Xa^\top \Ca \ya\Bigr),
    \qquad
    \hat\mu_{\arm} = \ybara - \xbara^\top \hat\beta_{\arm}.
\end{equation*}
Substituting $\ya=\mu_{\arm}\onea+\Xa\beta_{\arm}+S_{\arm} \res^{(\arm)}$ and using $\Ca\onea=0$ and $X_{\arm}^\top\Ca\onea=0$ yields
\begin{equation} \label{eq:beta-sample-minus-pop}
    \hat\beta_{\arm} = \beta_{\arm} + \Sigma_{\arm}^{-1} \Bigl(\frac{1}{n_{\arm}}\Xa^\top \Ca S_{\arm} \res^{(\arm)}\Bigr). 
\end{equation}
Finally,
\begin{equation}
    \hat\mu_{\arm} - \mu_{\arm} = \underbrace{\frac{1}{n_{\arm}}\onea^\top S_{\arm} \res^{(\arm)}}_{\text{residual mean in arm $a$}}
    - 
    \xbara^\top\bigl(\hat\beta_{\arm}-\beta_{\arm}\bigr).
    \label{eq:mu-deviation}
\end{equation}

\paragraph{Regression adjustment.}
Recall the population residuals in \eqref{eqn:population_OLS}. 
Since $\bone_n^\top X=0$ and $\bone_n^\top \res^{(\arm)}=0$, 
\[
    \tau
        =\frac{1}{n}\bone_n^{\top} y^{(1)} - \frac{1}{n}\bone_n^{\top} y^{(0)} 
        = \mu_1-\mu_0.
\]
The regression-adjusted ATE estimator via OLS (or, the OLS-adjusted ATE estimator) refers to a natural plug-in estimator
\begin{equation}\label{eqn:OLS_RA}
    \hat\tau_{\mathrm{OLS}}\coloneqq \hat\mu_1-\hat\mu_0.
\end{equation}

\subsection{Asymptotics of regression adjustment}
For fixed $p$, OLS adjustment with treatment-covariate interactions (equivalently, arm-wise OLS with intercept as in \eqref{eqn:OLS_RA}) is a classical method that is asymptotically non-inferior to DiM under standard regularity. 
In particular, \citet[Theorem 1]{lin_ols2013} shows
\[
    \sqrt{n}( \hat\tau_{\mathrm{OLS}}-\tau ) \ \xRightarrow{d}\ \cN\bigl(0, \sigma^2_{\mathrm{res}}\bigr),
    \qquad\text{where}\qquad
    \sigma^2_{\mathrm{res}} \coloneqq \frac{v_1}{\rho}+\frac{v_0}{1-\rho}-v_\tau,
\]
with
$v_{\arm} \!\coloneqq \lim_{n \to \infty} \!\frac{1}{n}\sum_{i=1}^n\bigl(r^{(\arm)}_i\bigr)^2$ for each $\arm \in \{0,1\}$, and $v_\tau\! \coloneqq \lim_{n \to \infty} \!\frac{1}{n}\sum_{i=1}^n\bigl(r^{(1)}_i \!-\! r^{(0)}_i\bigr)^2$. 
For $p$ growing with $n$, \citet{lei2021regression} establish asymptotic normality (AN) for OLS-RA up to $p=o(n^{1/2})$ (modulo logs), and extend AN to $p=o(n^{2/3})$ with de-biasing.

At the core of these analyses is the decomposition obtained by \eqref{eq:beta-sample-minus-pop}--\eqref{eq:mu-deviation} and $\tau=\mu_1-\mu_0$:
\begin{equation}\label{eq:master-decomp}
    \hat\tau_{\mathrm{OLS}} - \tau
        = \underbrace{ \frac{1}{n_1} \bone_{n_1}^\top S_1 r^{(1)} - \frac{1}{n_0} \bone_{n_0}^\top S_0 r^{(0)}}_{\eqqcolon \hresdim;~~\text{Residual DiM}}
        - \underbrace{\Bigl(\sfR_1-\sfR_0\Bigr)}_{\text{Remainder}}
\end{equation}
where the arm-wise remainder for each $\arm \in \{0,1\}$ is
\begin{equation}\label{eqn:neumann_remainder}
    \sfR_{\arm} 
        \coloneqq \bar{x}_{\arm}^{\top} \bigl( \hat{\beta}_{\arm} - \beta_{\arm} \bigr) 
        = \bar x_{\arm}^\top \Sigma_{\arm}^{-1} \Bigl(\frac{1}{n_{\arm}}\Xa^\top \Ca S_{\arm} \res^{(\arm)}\Bigr).
\end{equation}
A sampling-without-replacement central limit theorem (e.g., \citet{hajek1960limiting}) implies $\sqrt{n} \cdot \hresdim \ \xRightarrow{d}\ \cN\bigl(0, \sigma^2_{\mathrm{res}}\bigr)$; hence, if the remainder is $o_P(n^{-1/2})$, the AN of $\hat\tau_{\mathrm{OLS}}-\tau$ follows. 
Therefore, ensuring $\sfR_1 - \sfR_0 = o_P(n^{-1/2})$ provides a sufficient condition for asymptotic normality.

\paragraph{Gaps and motivation.}
Despite strong asymptotic results for regression adjustment, several gaps remain.

\begin{itemize}
    \item 
    \emph{Systematic control of the random inverse.}
    The bottleneck in the analysis of \eqref{eqn:neumann_remainder} is the random inverse $\Sigma_{\arm}^{-1}$. 
    Prior work either (i) keeps $p$ fixed, or (ii) handles $p$ only up to $o(n^{1/2})$, with considering a \emph{single} (degree‑$0$) correction pushing to $o(n^{2/3})$. 
    A general scheme that progressively controls higher powers of $\Delta_{\arm}\coloneqq I_p-\Sigma_{\arm}$ is missing.
    
    \item
    \emph{Growth of $p$ in a design‑based framework.}
    Refinements such as lasso adjustments, cross‑fitting/double machine learning (ML), or super‑population models achieve AN under sparsity or extra modeling assumptions, but they either modify the estimator or depart from the finite‑population viewpoint. 
    A design‑based theory for \emph{plain} OLS‑RA that scales across the $p=o(n)$ remains limited. 
    Addressing this matters because OLS underlies many modern statistics/ML pipelines (e.g., learned features ultimately feed a linear fit).
    
    \item
    \emph{Tunable correction and tradeoff.}
    Degree-0 de‑biasing is a one‑shot fix. 
    With many covariates, analysts may want a principled “dial” that trades computation for performance as $p$ grows, retaining design‑based validity. 
\end{itemize}


\section{Method: Neumann series correction}\label{sec:methods}

\subsection{Motivation and intuition}
We develop a Neumann‑series correction of degree $d$ that leaves the OLS‑RA estimator intact but augments it with computable arm‑wise terms.
Recall the bottleneck for analyzing \eqref{eq:master-decomp} is the \emph{random inverse} $\Sigma_{\arm}^{-1}$ inside $\sfR_{\arm}$, cf. \eqref{eqn:neumann_remainder}. 
To tackle this challenge, we represent $\sfR_{\arm}$ by expanding $(I_p-\Delta_{\arm})^{-1}$ via a Neumann series, truncate at degree $d$, and estimate the truncated series from observed data.

\paragraph{Neumann series expansion and truncation.}
For each $\arm \in \{0,1\}$, define shorthand notations
\begin{equation*}
    \Delta_{\arm}\coloneqq I_p - \Sigma_{\arm},
    \qquad
    u_{\arm} \coloneqq \frac{1}{n_{\arm}} X_{\arm}^\top \Ca S_{\arm} \res^{(\arm)}\in\RR^p
\end{equation*}
so that $\sfR_{\arm} = \xbara^\top (I_p - \Delta_{\arm})^{-1} u_{\arm}$.
On the event that $\opnorm{\Delta_{\arm}}<1$, 
\begin{align}
        \Sigma_{\arm}^{-1} 
        = (I_p - \Delta_{\arm})^{-1} 
        = \sum_{d'=0}^\infty \Delta_{\arm}^{d'}    \qquad
        &\implies\qquad
        \sfR_{\arm} 
            =
            \sum_{d' = 0}^{d}  \underbrace{ \xbara^\top  \, \Delta_{\arm}^{d'} \, u_{\arm} }_{\eqqcolon \sfR_{\arm}^{[d']}}
            + \sum_{d'= d+1}^{\infty} \xbara^\top \, \Delta_{\arm}^{d'} \, u_{\arm}.
                \label{eqn:R-approx}
\end{align}
Assumption~\ref{assumption:rates} (in Section \ref{sec:theory-OLS}) ensure $\|\Delta_{\arm}\|<1$ with high probability, so the Neumann series converges and the truncation error is controlled; see Appendix~\ref{sec:assumption_rate} for further discussion.

We estimate the truncated Neumann series $\sum_{d'=0}^{d} \sfR_{\arm}^{[d']}$ and add it to $\hat{\tau}_{\mathrm{OLS}}$ to counterbalance the remainder term in \eqref{eq:master-decomp} up to degree $d$, in hopes of de-biasing or variance reduction effects. 
In the rest of this section, we describe how to estimate the truncated series from available data. 

\subsection{Estimating Neumann terms from data}\label{sec:neumann_estimation}
Each Neumann term $\sfR_{\arm}^{[d']} = \xbara^\top  \, \Delta_{\arm}^{d'} \, u_{\arm}$ depends on the population residuals $\res^{(\arm)}$ through the term $u_{\arm}$, which requires the entire potential outcomes and is not computable from the available data. 
Thus, we compute its design expectation $\bbE\bigl[ \sfR_{\arm}^{[d]} \bigr]$ and use its sample-analog estimator for data-driven correction of $\hat{\tau}_{\mathrm{OLS}}$.

\paragraph{Design expectation.} 
For $\cS \subseteq [n]$ with size $m$, define
\[
    \bar{x}_{\cS} \coloneqq \frac{1}{m} \sum_{i \in \cS} x_i
    \qquad\text{and}\qquad
    \Sigma_{\cS} \coloneqq \frac{1}{m} \sum_{i \in \cS} ( x_i - \bar{x}_{\cS} )( x_i - \bar{x}_{\cS} )^{\top},
\]
in a compatible way with \eqref{eqn:armwise_means}, \eqref{eqn:armwise_covariance} so that $\bar{x}_{\arm} = \bar{x}_{\Sa}$ and $\Sigma_{\arm} = \Sigma_{\Sa}$.

\begin{definition}\label{defn:neumann_weight}
    Let $n, m \in \ZZp$ such that $1 \leq m \leq n$. 
    Let $\cS \subset [n]$ be a simple random sample without replacement (SRSWOR) of size $m$. 
    For $i \in [n]$ and $d \in \ZZnn$, the $i$-th Neumann weight (of size $m$ and degree $d$) associated with $X \in \RR^{n \times p}$ is defined as
    \begin{equation}\label{eqn:xi_weight}
    \begin{aligned}
        \xi^{[d]}_i(m; X) &= \bbE\left[ \, \bar{x}_{\cS}^{\top} \bigl( I_p - \Sigma_{\cS} \bigr)^d \bigl( x_i - \bar{x}_{\cS} \bigr) \,\Bigm|\, i \in \cS \, \right].
    \end{aligned}
    \end{equation}
\end{definition}

\begin{proposition}\label{prop:neumann_exp}
    For every $d \in \ZZnn$, 
    \begin{equation}\label{eqn:design_expectation}
        \bbE\bigl[ \sfR^{[d]}_{\arm} \bigr]
            = \frac{1}{n} \sum_{i=1}^n \xi^{[d]}_i(n_{\arm}; X) \cdot r^{(\arm)}_i,
            \qquad \forall \arm \in \{0,1\}.
    \end{equation}
\end{proposition}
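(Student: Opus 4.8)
The plan is to reduce $\sfR^{[d]}_{\arm}$ to an explicit finite sum over the random arm set $\Sa$ and then take the design expectation term by term; no concentration or asymptotic input is needed, since the claimed identity is exact. First I would simplify $u_{\arm}$. Because $\Ca$ is symmetric and idempotent, $\Xa^\top \Ca S_{\arm}\res^{(\arm)} = (\Ca\Xa)^\top (S_{\arm}\res^{(\arm)})$, and since $\Ca\Xa$ has rows $(x_i-\xbara)^\top$ for $i\in\Sa$ while $S_{\arm}\res^{(\arm)}$ has entries $r^{(\arm)}_i$ for $i\in\Sa$, this equals $\sum_{i\in\Sa}(x_i-\xbara)\,r^{(\arm)}_i$. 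Hence
\[
    \sfR^{[d]}_{\arm}
    = \xbara^\top \bigl(I_p-\Sigma_{\arm}\bigr)^d u_{\arm}
    = \frac{1}{\na}\sum_{i\in\Sa} \xbara^\top \bigl(I_p-\Sigma_{\arm}\bigr)^d (x_i-\xbara)\, r^{(\arm)}_i .
\]
Introducing, for $\cS\ni i$, the shorthand $g^{[d]}_i(\cS)\coloneqq \bar{x}_{\cS}^\top (I_p-\Sigma_{\cS})^d (x_i-\bar{x}_{\cS})$ — which depends on $\cS$ only through $\{x_j:j\in\cS\}$ together with the index $i$, exactly the dependence appearing in \eqref{eqn:xi_weight} — we obtain $\sfR^{[d]}_{\arm}=\frac{1}{\na}\sum_{i\in\Sa} g^{[d]}_i(\Sa)\,r^{(\arm)}_i$.

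Next I would take the design expectation. Under complete randomization $\Sa$ is, marginally, a SRSWOR of size $\na$ from $[n]$: this is immediate for $\arm=1$, and for $\arm=0$ it follows because $\Szero=[n]\setminus\Sone$ is uniform over size-$\nzero$ subsets whenever $\Sone$ is uniform over size-$\none$ subsets. Since the population residuals $r^{(\arm)}_i$ are non-random in the finite-population model, linearity of expectation together with the indicator decomposition $\sum_{i\in\Sa}(\cdot)=\sum_{i=1}^n \Ind\{i\in\Sa\}(\cdot)$ gives
\[
    \bbE\bigl[\sfR^{[d]}_{\arm}\bigr]
    = \frac{1}{\na}\sum_{i=1}^n r^{(\arm)}_i\, \bbE\bigl[\Ind\{i\in\Sa\}\, g^{[d]}_i(\Sa)\bigr]
    = \frac{1}{\na}\sum_{i=1}^n r^{(\arm)}_i\, \bbP(i\in\Sa)\, \bbE\bigl[g^{[d]}_i(\Sa)\,\big|\,i\in\Sa\bigr].
\]
Now $\bbP(i\in\Sa)=\na/n$, and $\bbE\bigl[g^{[d]}_i(\Sa)\mid i\in\Sa\bigr]=\xi^{[d]}_i(\na;X)$ by Definition~\ref{defn:neumann_weight}, because the conditional law of $\Sa$ given $i\in\Sa$ is precisely that of a size-$\na$ SRSWOR conditioned to contain $i$, matching the conditioning in \eqref{eqn:xi_weight}. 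Substituting and cancelling $\na$ yields $\bbE[\sfR^{[d]}_{\arm}]=\frac{1}{n}\sum_{i=1}^n \xi^{[d]}_i(\na;X)\, r^{(\arm)}_i$.

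The argument is essentially bookkeeping, and the only points that merit care are (i) the collapse of the centered product $\Xa^\top\Ca S_{\arm}\res^{(\arm)}$ into the clean sum $\sum_{i\in\Sa}(x_i-\xbara)r^{(\arm)}_i$ (equivalently, that centering $\Xa$ and centering $S_{\arm}\res^{(\arm)}$ agree here thanks to $\Ca^2=\Ca=\Ca^\top$), and (ii) checking the marginal SRSWOR property for the control arm so that Definition~\ref{defn:neumann_weight} applies verbatim to both $\arm\in\{0,1\}$. I do not anticipate a genuine obstacle: the identity holds for every finite $n$, $p$, and $d$, with no appeal to invertibility of $\Sigma_{\arm}$ or to any high-probability event — that is exactly the role of the Neumann truncation, which replaces the random inverse by a polynomial in $\Sigma_{\arm}$ whose expectation is always well-defined.
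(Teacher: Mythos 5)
Your proposal is correct and follows essentially the same route as the paper's own proof: rewrite $u_{\arm}$ as $\frac{1}{\na}\sum_{i\in\Sa}(x_i-\xbara)r^{(\arm)}_i$, expand via the indicator $\Ind\{i\in\Sa\}$, and use $\Pr(i\in\Sa)=\na/n$ together with the conditional expectation defining $\xi^{[d]}_i(\na;X)$. The extra remarks you add (the SRSWOR marginal for the control arm and the centering identity via $\Ca^2=\Ca=\Ca^\top$) are correct details that the paper treats implicitly.
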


\begin{proof}[Proof of Proposition \ref{prop:neumann_exp}]
    By definition, cf. \eqref{eqn:R-approx}, the degree-$d$ Neumann term
    \begin{align*}
        \sfR^{[d]}_{\arm} 
            &= \bar x_{\Sa}^\top \bigl( I_p - \Sigma_{\Sa} \bigr)^d \biggl( \frac{1}{n_{\arm}}\Xa^\top \Ca S_{\arm} \res^{(\arm)} \biggr)\\
            &= \bar x_{\Sa}^\top \bigl( I_p - \Sigma_{\Sa} \bigr)^d \biggl(\frac{1}{n_{\arm}} \sum_{i \in \Sa} (x_i - \bar{x}_{\Sa}) \res^{(\arm)}_i\biggr)\\
            &= \frac{1}{n_{\arm}} \sum_{i =1}^n \Ind\{i \in \Sa\}
                \cdot \underbrace{\bar x_{\Sa}^\top \bigl( I_p \!-\! \Sigma_{\Sa} \bigr)^d  (x_i \!-\! \bar{x}_{\Sa})}_{\eqqcolon \zeta_i^{[d]}(\Sa; X)}
                \cdot  \res^{(\arm)}_i.
    \end{align*}
    Taking expectation over the random set $\Sa$,
    \begin{align*}
        \bbE \bigl[ \sfR^{[d]}_{\arm} \bigr]
            &= \frac{1}{n_{\arm}} \sum_{i =1}^n 
                \bbE\Bigl[ \Ind\{i \in \Sa\} \cdot \zeta_i^{[d]}(\Sa; X) \Bigr] \cdot \res^{(\arm)}_i\\
            &= \frac{1}{n_{\arm}} \sum_{i =1}^n 
                \Pr\bigl( i \in \Sa \bigr) \cdot \bbE\Bigl[ \zeta_i^{[d]}(\Sa; X) \bigm| i \in \Sa \Bigr] \cdot \res^{(\arm)}_i\\
            &= \frac{1}{n} \sum_{i =1}^n \xi^{[d]}_i(n_{\arm}; X) \cdot r^{(\arm)}_i,
    \end{align*}
    because $\Pr\bigl( i \in \Sa \bigr) = \tfrac{|\Sa|}{n} = \tfrac{n_{\arm}}{n}$ under SRSWOR. 
\end{proof}

\begin{example}[Degree $d=0$: closed form]\label{example:exp_k.0}
    Assume the standard design normalization $1_n^\top X = 0$ and $X^\top X = n I_p$ (cf. Assumption \ref{assump:covariates}).
    For $d=0$, 
    \begin{equation}\label{eqn:Neumann_exp_k.0}
        \xi^{[0]}_i(m; X)
            = \frac{(m-1)(n-m)n }{ m^2(n-1)(n-2)} \, \Bigl( \|x_i\|_2^2 - p \Bigr).
    \end{equation}
    See Appendix \ref{sec:example_design_k.0} for the derivation of \eqref{eqn:Neumann_exp_k.0}.
\end{example}

For $d \geq 1$, higher-order interaction terms arise from $(I_p - \Sigma_{\cS})^d$; see Section \ref{sec:design_expectation} for a combinatorial argument to account for these.

\paragraph{Sample analogs.}
Note that the population residuals $\res^{(\arm)}_i$ in \eqref{eqn:design_expectation} are not available in practice. 
Thus, we use sample analog estimators of $\bbE\bigl[ \sfR^{[d]}_{\arm} \bigr]$ using in-sample OLS residuals. 
Specifically, for each $d \in \ZZnn$, 
\begin{equation}\label{eqn:Neumann_k_sample}
    \whsfR^{[d]}_{\arm} \coloneqq \frac{1}{n_{\arm}} \sum_{i \in \Sa}  \xi^{[d]}_i(n_{\arm}; X) \cdot \hat{\res}^{(\arm)}_i.
\end{equation}
Under the mild conditions that $p = o(n)$ and $\sup_{\arm \in \{0,1\}} \sup_n n^{-1} \bigl\| r^{(\arm)} \bigr\|_2^2 < \infty$, 
this sample-analog closely approximates the design expectation: $\whsfR_{\arm}^{[d]} - \bbE \bigl[ \sfR_{\arm}^{[d]} \bigr] = o_P(n^{-1/2})$; see Theorem \ref{thm:main-OLS} and Appendix \ref{sec:proof_main_theorem}.

\begin{example}[Degree $d=0$: sample analog estimator]\label{example:sample_k.0}
    For $d=0$, cf. \eqref{eqn:Neumann_exp_k.0} in Example \ref{example:exp_k.0}, for each $a \in \{0,1\}$,
    \begin{equation}\label{eqn:Neumann_sample_k.0}
        \begin{aligned}
            \whsfR^{[0]}_{\arm} &\coloneqq \frac{n_{\arm} - 1}{n_{\arm}}\frac{n^2}{(n-1)(n-2)}
                \cdot \frac{n_{1-\arm}}{n_{\arm}} \cdot \frac{1}{n_{\arm}} \sum_{i \in \Sa}  \biggl( \frac{\|x_i\|_2^2}{n} - \frac{p}{n} \biggr) \cdot \hat{\res}^{(\arm)}_i.
        \end{aligned}
    \end{equation}
\end{example}

We compare \eqref{eqn:Neumann_sample_k.0} with the debiased estimator proposed in \citet{lei2021regression}. 
They proposed 
\[
    \hat{\tau}^{\mathrm{de}}_{\mathrm{OLS}}
        = \hat{\tau}_{\mathrm{OLS}} - \biggl( \frac{n_1}{n_0} \hat{\Delta}_0 - \frac{n_0}{n_1} \hat{\Delta}_1 \biggr),
\]
where $\hat{\Delta}_{\arm} = n_{\arm}^{-1} \sum_{i \in \Sa} h_i \cdot \hat{r}^{(\arm)}_i$ where $h_i := \bigl[ X(X^{\top}X)^{-1} X^{\top} \bigr]_{ii}$ is the $i$-th leverage score. 
Noticing that $h_i = \tfrac{\|x_i\|_2^2}{n}$ (because $X^{\top}X = n I_p$ per Assumption \ref{assump:covariates}) and that $\sum_{i=1}^n h_i = p$, the degree-0 Neumann correction term in \eqref{eqn:Neumann_sample_k.0} is a scalar multiple of the Lei-Ding debiased estimator by the finite-sample correction factors $\frac{n_{\arm} - 1}{n_{\arm}}\frac{n^2}{(n-1)(n-2)}$. 
Since $\bone_n^\top r^{(\arm)}=0$ by the population normal equations with $\bone_n^\top X=0$, centering the leverage $h_i$ by $p/n$ does not change the sum.

\paragraph{OLS-adjusted ATE estimator with Neumann correction of degree up to $d$.} 
Finally, we define the Neumann-corrected estimator as follows. 

\begin{definition}
    For a nonnegative integer $k$, the \emph{OLS-adjusted ATE estimator with Neumann correction of degree up to $d$} is
    \begin{equation}\label{eq:NPm-def-final}
        \htauols^{[d]}
            \coloneqq
            \htauols   
            + \sum_{d'=0}^{d} \left( \whsfR_1^{[d']}-\whsfR_0^{[d']} \right).
    \end{equation}
    where the degree-$d'$ correction term $\whsfR_{\arm}^{[d']}$ is as in \eqref{eqn:Neumann_k_sample}.
\end{definition}

\section{Design expectation of Neumann terms}\label{sec:design_expectation}

Recall from \eqref{eqn:design_expectation} in Proposition \ref{prop:neumann_exp} that for every integer $d \geq 0$, 
$\bbE\bigl[ \sfR^{[d]}_{\arm} \bigr] = \frac{1}{n} \sum_{i=1}^n \xi^{[d]}_i(n_{\arm}; X) \cdot r^{(\arm)}_i$, $\forall \arm \in \{0,1\}$. 
Thus, computing the expectation reduces to computing $\xi^{[d]}_i(m;X)$ as defined in \eqref{eqn:xi_weight} with $m=n_{\arm}$.

Let
\[
    U_{\cS} \coloneqq  \frac{1}{m} \sum_{j \in \cS}  x_j x_j^{\top},
    \qquad
    V_{\cS} \coloneqq \bar{x}_{\cS} \bar{x}_{\cS}^{\top}.
\]
Then $\Sigma_{\cS} = U_{\cS} - V_{\cS}$ and the argument inside the conditional expectation is
\begin{equation}\label{eqn:neumann_template}
        \bar{x}_{\cS}^{\top} \bigl( I_p - U_{\cS} + V_{\cS} \bigr)^d \bigl( x_i - \bar{x}_{\cS} \bigr),
\end{equation}
which is a polynomial in $(x_1, \cdots, x_n)$ of degree $2d+2$. 
Its expectation depends only on which unit indices coincide across the factors, i.e., on the multi‑index pattern of each constituent monomial rather than on the specific values of those indices, conditioned on $i \in \cS$.

\paragraph{Combinatorial definitions.} 
We formalize the multi-index patterns to properly enumerate and stratify them, as well as identify their joint inclusion probability conditioned on the event $i \in \cS$.

\begin{definition}[Word]\label{defn:word}
    Let $d \in \ZZnn$. 
    A \emph{word} is a pair $\theta = (\phi, \chi)$ with
    \[
        \phi = (\phi_1, \cdots, \phi_d) \in \{ I, U, V \}^d,
        \qquad
        \chi \in \{i, \avg\}.
    \]
    Let $u(\phi) \coloneqq \bigl| \{ a: \phi_a = U \} \bigr|$ and $v(\phi) \coloneqq \bigl| \{ a: \phi_a = V \} \bigr|$.
    The \emph{sign} and \emph{multiplicity} of a word $\theta = (\phi,\chi)$ are
    \begin{align*}
        \mathrm{sgn}_{\theta} &\coloneqq (-1)^{u(\phi) + \Ind\{ \chi = \avg \}},\\
        L_{\theta} &\coloneqq 1 + u(\phi) + 2 v(\phi) + \Ind\{ \chi = \avg \}.
    \end{align*}
\end{definition}

A word encodes, for each factor of $\bigl(I_p - U_{\cS} + V_{\cS}\bigr)$, the choice among $I_p$, $-U_{\cS}$, and $V_{\cS}$, together with the terminal choice between $x_i$ (if $\chi=i$) and $-\bar{x}_{\cS}$ (if $\chi=\avg$), which comprise $3^d \times 2$ total possible configurations. 
The sign of word, $\mathrm{sgn}_{\theta}$, is equal to the sign of the corresponding monomial in the expansion of \eqref{eqn:neumann_template}, and the multiplicity $L_{\theta}$ equals the number of $\cS$-averages introduced by $\theta$ (each contributes a factor of $1/m$ in the pre-factor).

\begin{definition}[Position graph]\label{defn:position_graph}
    For a word $\theta = (\phi,\chi)$, its \emph{position graph} $\cG_{\theta} = (\cV_{\theta}, \cE_{\theta})$ is an undirected graph such that the vertex set $\cV_{\theta} = \{ 1, 2, \cdots, L_{\theta} \}$ and the edge set $\cE_{\theta} \subset \cV_{\theta} \times \cV_{\theta}$ is constructed as follows:
    \begin{enumerate}
        \item 
        Initialize $\cE_{\theta} \gets \emptyset$, $s \gets 1$.
        \item 
        For $a = 1, \cdots, d$:
        \begin{itemize}
            \item 
            If $\phi_a = I$, do nothing.
            \item 
            If $\phi_a = U$, $\cE_{\theta} \gets \cE_{\theta} \cup (s, s+1)$; set $s \gets s+1$.
            \item 
            If $\phi_a = V$, $\cE_{\theta} \gets \cE_{\theta} \cup (s, s+1)$; set $s \gets s+2$.
        \end{itemize}
        \item 
        If $\chi = \avg$, then $\cE_{\theta} \gets \cE_{\theta} \cup (s, s+1)$.
    \end{enumerate}
\end{definition}

Vertices index the appearances of $\bar x_{\cS}$ (hence, each vertex contributes a factor $1/m$ due to averaging); each edge $(s,t)$ records that the monomial contains an inner product between the vectors assigned to positions $s$ and $t$. 
Intuitively, choosing $U_{\cS}$ adds one new average (one vertex) and one edge; choosing $V_{\cS}$ adds two new averages (two vertices) and one edge between them; choosing $I_p$ adds neither. 
If $\chi=\avg$, the terminal $-\bar x_{\cS}$ contributes one more average and one more edge.

\begin{definition}[Partition of a set]\label{defn:partition}
    A \emph{partition} of a set $\cT \subset \ZZ$ is a collection of non-empty, pairwise disjoint subsets of $\cT$, called \emph{blocks} whose union is $\cT$. 
    We fix an enumeration $\pi = \{ P_1, \dots, P_{\ell} \}$ of its blocks by the increasing order of minima, 
    \[
        \min P_1 < \min P_2 < \cdots < \min P_{\ell},
    \]
    and write $|\pi| = \ell$. 
    The \emph{partition index} is a map $\iota_{\pi}: \cT \to [|\pi|]$ defined by 
    \[
        \iota_{\pi}(t) = a 
            \qquad\text{if and only if}\qquad t \in P_{a}.
    \]
    Let $\Part(\cT)$ denote the set of all partitions of $\cT$. 
\end{definition}

\begin{definition}[Allocation and design monomial]\label{defn:allocation}
    For a word $\theta$ and a partition $\pi = \{ P_1, \dots, P_{\ell}\} \in \Part(\cV_{\theta})$, 
    an \emph{allocation} admissible to $(\theta, \pi)$ is an injective map $\omega: [\ell] \to [n]$; the set of all allocations is
    \[
        \Omega(\theta, \pi) \coloneqq \left\{ \omega \in [n]^{\ell}: \omega_a \neq \omega_b, ~\forall (a,b) \text{ with }a \neq b \right\}.
    \]
    For $X \in \RR^{n \times p}$, its $i$-th \emph{design monomial} with respect to $\omega \in \Omega(\theta, \pi)$ is    
    \begin{equation*}
        \varphi_{\omega}(X; i) 
            \coloneqq \biggl(\! \prod_{(s,t) \in \cE_{\theta}} x_{ \omega_{\iota_{\pi}(s)}}^{\top} x_{ \omega_{\iota_{\pi}(t)}} \!\biggr) 
            \times \bigl( x_{ \omega_{\iota_{\pi}(L_{\theta})}}^{\top} x_i \bigr)^{\Ind\{ \chi = i \}}.
    \end{equation*}
\end{definition}

Note $\varphi_{\omega}(X; i)$ is the monomial we get when 
(i) the element in each parenthesis in \eqref{eqn:neumann_template} is chosen per the word $\theta$; 
(ii) each of the unit indices are grouped per the partition $\pi \in \Part( \cV_{\theta})$; and
(iii) for each block, a unit index is assigned by the assignment $\omega \in \Omega(\theta, \pi)$. 
See Appendix \ref{sec:example_word_partition} for an example that illustrates words, partition and monomial for the case $d=1$.

\paragraph{Design expectation formula.} 
The definitions above are all deterministic; no probabilities appeared yet. 
Now we take into account the joint inclusion probabilities of assigned indices.

\begin{lemma}[SRSWOR conditional joint inclusion]\label{lem:SRSWOR_inclusion}
    Fix $i \in [n]$. 
    Let $\cS \subset [n]$ be SRSWOR with $|\cS| = m$. 
    For any $\cS' \subset [n] \setminus\{i\}$ with $|\cS'| = \ell < m$,
    \[
        \Pr( \cS' \subset \cS \mid i \in \cS ) = \frac{(m-1)_\ell}{(n-1)_\ell},
    \]
    where $(n)_\ell = \prod_{a=1}^\ell (n+1-a)$ is the falling factorial.
\end{lemma}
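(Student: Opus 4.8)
The plan is to reduce the conditional statement to a plain uniform‑subset count. The key observation is an exchangeability fact: under SRSWOR, conditioning on the event $\{i \in \cS\}$ leaves $\cS \setminus \{i\}$ distributed uniformly over all $(m-1)$‑subsets of $[n] \setminus \{i\}$. This is immediate because $\cS$ is uniform over all $\binom{n}{m}$ subsets of size $m$, so the $m$‑subsets containing $i$ are in bijection (via $\cS \mapsto \cS \setminus \{i\}$) with the $(m-1)$‑subsets of the $(n-1)$‑element set $[n]\setminus\{i\}$, and uniformity is preserved.

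Given that, the first step is to rewrite the target probability. Since $\cS' \subseteq [n]\setminus\{i\}$ by hypothesis, we have $\{\cS' \subset \cS\} \cap \{i \in \cS\} = \{\cS' \subset \cS \setminus \{i\}\} \cap \{i \in \cS\}$, hence
\[
    \Pr(\cS' \subset \cS \mid i \in \cS) = \Pr\bigl(\cS' \subset (\cS \setminus \{i\}) \,\bigm|\, i \in \cS\bigr),
\]
and by the exchangeability fact the right‑hand side equals the probability that a uniformly random $(m-1)$‑subset $\cT$ of an $(n-1)$‑element set contains a fixed $\ell$‑subset. The second step is the count: the number of $(m-1)$‑subsets of an $(n-1)$‑set containing a fixed $\ell$‑set (legitimate since $\ell < m \le n$, so $m-1-\ell \ge 0$) is $\binom{(n-1)-\ell}{(m-1)-\ell}$, while the total is $\binom{n-1}{m-1}$. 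Taking the ratio and cancelling factorials,
\[
    \frac{\binom{n-1-\ell}{m-1-\ell}}{\binom{n-1}{m-1}}
    = \frac{(m-1)!\,/\,(m-1-\ell)!}{(n-1)!\,/\,(n-1-\ell)!}
    = \frac{(m-1)(m-2)\cdots(m-\ell)}{(n-1)(n-2)\cdots(n-\ell)}
    = \frac{(m-1)_\ell}{(n-1)_\ell},
\]
which is the claimed identity; the case $\ell = 0$ is the trivial equality $1 = 1$.

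An equivalent route, which I would mention as an alternative, is to order the remaining $m-1$ draws sequentially after placing $i$ and apply the chain rule: the probability that the $\ell$ fixed indices all appear among $m-1$ draws from the $n-1$ remaining units is $\prod_{a=1}^{\ell} \frac{m-a}{n-a}$, giving the same product. The proof involves no real obstacle; the only point requiring care is the exchangeability reduction in the first paragraph (i.e., justifying that conditioning on $\{i\in\cS\}$ yields a uniform law on $(m-1)$‑subsets of $[n]\setminus\{i\}$) and keeping the falling‑factorial bookkeeping consistent with the convention $(n)_\ell = \prod_{a=1}^{\ell}(n+1-a)$ used in the statement.
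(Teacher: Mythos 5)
Your proof is correct: the reduction to a uniform $(m-1)$-subset of $[n]\setminus\{i\}$, the count $\binom{n-1-\ell}{m-1-\ell}/\binom{n-1}{m-1}$, and the cancellation to $(m-1)_\ell/(n-1)_\ell$ are all valid, and the bookkeeping matches the stated falling-factorial convention. The paper states this lemma without proof, treating it as a standard SRSWOR counting fact, and your argument is precisely the standard derivation it implicitly relies on, so there is nothing to reconcile.
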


We separate allocations depending on whether the fixed index $i$ is used by the assignment, because the corresponding inclusion probabilities differ. 
Injectivity of $\omega$ implies that at most one block can be assigned $i$, hence if $i$ appears there is a unique $a^\star$. 
Define two disjoint classes, namely, Class 0 and Class 1, as 
\begin{align*}
    \Omega(\theta, \pi)_0 &\coloneqq \{ \omega \in \Omega(\theta, \pi): \omega_{a} \neq i, ~\forall a \in [|\pi|] \},\\
    \Omega(\theta, \pi)_1 &\coloneqq \{ \omega \in \Omega(\theta, \pi): \exists! a^* \text{ with }\omega_{a^*} = i \},
\end{align*}
and the corresponding monomial aggregates:
\begin{equation}\label{eqn:diagram_aggregates}
    \begin{aligned}
        \Phi_{\pi, 0}(X; i) &\coloneqq \sum_{\omega \in \Omega(\theta, \pi)_0} \varphi_{\omega}(X; i),\\
        \Phi_{\pi, 1}(X; i) &\coloneqq \sum_{\omega \in \Omega(\theta, \pi)_1} \varphi_{\omega}(X; i).
    \end{aligned}
\end{equation}

\begin{theorem}[Formula for $\xi^{[d]}_i(m; X) $]\label{thm:xi_formula}
    Let $n \in \ZZp$ and $m \in [n]$. 
    For every $d \in \ZZnn$ and $i \in [n]$,
    \begin{align*}
        &\xi^{[d]}_i(m; X) = \sum_{\theta \in \{ I, U, V \}^d \times \{i, \avg\}} \frac{\mathrm{sgn}_{\theta}}{m^{L_{\theta}}}
            \cdot
                \sum_{\pi \in \Part(\cV_{\theta})} \biggl( \, \psi_0(\pi) \cdot \Phi_{\pi, 0}(X; i)
                + \psi_1(\pi) \cdot \Phi_{\pi, 1}(X; i) \, \biggr),
    \end{align*}
    where $\psi_0(\pi) = \frac{(m-1)_{|\pi|}}{(n-1)_{|\pi|}}$ and $\psi_1(\pi) = \frac{(m-1)_{|\pi|-1}}{(n-1)_{|\pi|-1}}$.
\end{theorem}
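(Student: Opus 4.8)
The plan is to prove the identity by fully expanding the matrix polynomial inside the conditional expectation that defines $\xi^{[d]}_i(m;X)$, recognizing the resulting signed sum of scalar monomials as being indexed exactly by the combinatorial data of Definitions~\ref{defn:word}, \ref{defn:position_graph} and \ref{defn:allocation}, and then evaluating the conditional expectation of each monomial via exchangeability and Lemma~\ref{lem:SRSWOR_inclusion}. No convergence hypothesis is needed: for fixed $d$ the integrand is a polynomial of degree $2d+2$ in $(x_1,\dots,x_n)$, so the expectation is automatically finite and the truncation issues behind Assumption~\ref{assumption:rates} are irrelevant here. Concretely, I would first expand $\bigl(I_p-\Sigma_{\cS}\bigr)^d=\bigl(I_p-U_{\cS}+V_{\cS}\bigr)^d$ by distributing the $d$-fold product, choosing each factor to be $I_p$, $-U_{\cS}$ or $V_{\cS}$ (recording $\phi\in\{I,U,V\}^d$) and choosing the terminal vector $x_i-\bar x_{\cS}$ to be $+x_i$ or $-\bar x_{\cS}$ (recording $\chi\in\{i,\avg\}$); since only $U$-factors and a terminal $-\bar x_{\cS}$ carry a minus sign while $V_{\cS}=\bar x_{\cS}\bar x_{\cS}^\top$ is unsigned, the global sign of the word $\theta=(\phi,\chi)$ is $(-1)^{u(\phi)+\Ind\{\chi=\avg\}}=\mathrm{sgn}_\theta$. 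Substituting $U_{\cS}=m^{-1}\sum_{j\in\cS}x_jx_j^\top$, $V_{\cS}=m^{-2}\sum_{j,k\in\cS}x_jx_k^\top$ and $\bar x_{\cS}=m^{-1}\sum_{j\in\cS}x_j$ and distributing all internal sums, the $\theta$-term becomes $\mathrm{sgn}_\theta\,m^{-L_\theta}$ times a sum over tuples $(j_1,\dots,j_{L_\theta})\in\cS^{L_\theta}$ of a product of inner products, where the leading $\bar x_{\cS}^\top$ gives one summation slot, each $U_{\cS}$ one slot, each $V_{\cS}$ two slots, and a terminal $-\bar x_{\cS}$ one more, totalling $L_\theta=1+u(\phi)+2v(\phi)+\Ind\{\chi=\avg\}$ slots with combined prefactor $m^{-L_\theta}$.

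The structural heart of the argument is then to show that reading the chain $\bar x_{\cS}^\top M_1\cdots M_d v$ from left to right produces exactly the inner products recorded by the edges of the position graph $\cG_\theta$. I would verify this by a short induction on the number of factors processed, carrying as hypothesis that after processing $\phi_1,\dots,\phi_a$ the partial chain equals $m^{-(\#\mathrm{slots})}\sum_{(j_\bullet)}\bigl(\prod_{\text{edges so far}}x_{j_s}^\top x_{j_t}\bigr)\,x_{j_{s}}^\top$, with $s$ the running slot counter of Definition~\ref{defn:position_graph}: a $U$-factor contracts the current dangling slot with one new slot, which then becomes dangling; a $V$-factor contracts the dangling slot with the first of two new slots and leaves the second dangling (its outer-product behaviour, with no edge between the two new slots); an $I$-factor does nothing; finally the terminal $-\bar x_{\cS}$ contracts the last dangling slot with one final new slot while the terminal $x_i$ contracts it with the fixed vector $x_i$. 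Consequently, once we declare which slots share a common index — a set partition $\pi\in\Part(\cV_\theta)$ with $s\sim t\iff j_s=j_t$ — and record the distinct values as an injective map $\omega\in\Omega(\theta,\pi)$ via $j_s=\omega_{\iota_\pi(s)}$, the tuple's contribution is precisely the design monomial $\varphi_\omega(X;i)$ of Definition~\ref{defn:allocation}.

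It then remains to take the conditional expectation. Rewriting $\sum_{(j_\bullet)\in\cS^{L_\theta}}=\sum_{(j_\bullet)\in[n]^{L_\theta}}\prod_s\Ind\{j_s\in\cS\}$ and using that each monomial is deterministic, $\bbE[\,\cdot\mid i\in\cS]$ attaches the factor $\Pr\bigl(\{j_1,\dots,j_{L_\theta}\}\subseteq\cS\mid i\in\cS\bigr)$ to each monomial. Stratifying by the coincidence partition $\pi$ as above, this probability depends only on $|\pi|$ and on whether $i$ is one of the allocated values (SRSWOR exchangeability), and injectivity of $\omega$ forces at most one block to carry $i$, giving the split $\Omega(\theta,\pi)=\Omega(\theta,\pi)_0\sqcup\Omega(\theta,\pi)_1$. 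Lemma~\ref{lem:SRSWOR_inclusion} then yields the probability $\tfrac{(m-1)_{|\pi|}}{(n-1)_{|\pi|}}=\psi_0(\pi)$ on Class~0 ($|\pi|$ distinct indices, all different from $i$, required to lie in $\cS$ given $i\in\cS$) and $\tfrac{(m-1)_{|\pi|-1}}{(n-1)_{|\pi|-1}}=\psi_1(\pi)$ on Class~1 (the block equal to $i$ is automatically in $\cS$, leaving $|\pi|-1$ distinct indices $\neq i$). Summing $\varphi_\omega(X;i)$ over each class gives $\Phi_{\pi,0}(X;i)$ and $\Phi_{\pi,1}(X;i)$ from \eqref{eqn:diagram_aggregates}, and reassembling over $\pi\in\Part(\cV_\theta)$ and $\theta\in\{I,U,V\}^d\times\{i,\avg\}$ reproduces the asserted formula.

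I expect the main obstacle to be the bookkeeping in the second step: making the dictionary between the left-to-right algebraic expansion and the position-graph recursion watertight, especially the asymmetric contribution of a $V$-factor (one contracted slot, one dangling slot) and the verification that the accumulated $1/m$ factors total exactly $m^{-L_\theta}$. After that, the probabilistic step is routine given Lemma~\ref{lem:SRSWOR_inclusion}. A minor point to dispatch separately: when $|\pi|$ exceeds $m$ (or $n$), the falling factorials in $\psi_0$, $\psi_1$ can vanish or be formally $0/0$, but in those cases the corresponding allocation class is empty or the conditional probability is already zero, so the identity holds with the obvious convention (equivalently, one may assume $n\ge 2d+2$, which is harmless since $L_\theta\le 2d+2$).
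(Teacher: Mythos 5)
Your proposal is correct and follows essentially the same route as the paper's proof: expand $(I_p-U_{\cS}+V_{\cS})^d$ into words $\theta$ with sign $\mathrm{sgn}_\theta$ and prefactor $m^{-L_\theta}$, identify the inner-product pattern with the edges of $\cG_\theta$, regroup tuples by their coincidence partition $\pi$ with injective allocations $\omega$, and apply Lemma~\ref{lem:SRSWOR_inclusion} to the Class~0/Class~1 split to obtain $\psi_0(\pi)\Phi_{\pi,0}+\psi_1(\pi)\Phi_{\pi,1}$. Your added details (the left-to-right induction matching the position-graph recursion, and the degenerate-case convention when $|\pi|$ is large) only make explicit steps the paper leaves implicit.
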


\begin{proof}[Proof of Theorem \ref{thm:xi_formula}]
    Fix $d$ and $i$ and expand \eqref{eqn:neumann_template}. 
    Each choice of $\theta=(\phi,\chi)$ contributes monomials with overall sign $\mathrm{sgn}_\theta$ and a total of $L_\theta$ averages, yielding the prefactor $m^{-L_\theta}$. 
    The inner‑product structure of each monomial is encoded by $\cG_\theta$: its edges specify which positions are paired in inner products.
    
    Regroup the sum by set partitions $\pi\in\Part(\cV_\theta)$. 
    Within a block of $\pi$, all positions share the same unit index, and the indices are distinct across distinct blocks. 
    For a fixed partition $\pi$ and an injective block assignment $\omega:[|\pi|]\to[n]$, the corresponding monomial equals $\varphi_\omega(X;i)$ by Definition~\ref{defn:allocation}. 
    
    Taking conditional expectation given $i\in\cS$ reduces to the probability that all block indices (excluding $i$ if it is used) are contained in $\cS$. 
    If $\omega\in\Omega(\theta,\pi)_0$, none of the $|\pi|$ assigned indices equals $i$, so by Lemma~\ref{lem:SRSWOR_inclusion} the inclusion probability is $(m-1)_{|\pi|}/(n-1)_{|\pi|}=\psi_0(\pi)$. 
    If $\omega\in\Omega(\theta,\pi)_1$, exactly one block is fixed to $i$ (uniqueness by injectivity), and the remaining $|\pi|-1$ distinct indices must lie in $\cS\setminus\{i\}$; the conditional probability equals $(m-1)_{|\pi|-1}/(n-1)_{|\pi|-1}=\psi_1(\pi)$. 
    Summing the resulting expectations over $\omega$ produces \(\Phi_{\pi,0}(X;i)\) and \(\Phi_{\pi,1}(X;i)\). 
    Finally, summing over all partitions \(\pi\) and words \(\theta\) yields the stated formula.
\end{proof}

Although Theorem \ref{thm:xi_formula} provides an explicit closed-form expression for $\xi^{[d]}_i(m; X)$, brute-force enumeration over all words, partitions, and (injective) allocations can be computationally prohibitive. 
Appendix \ref{sec:computation_expectation} presents implementable procedures (Algorithms~\ref{alg:folding_mobius} and \ref{alg:scalar_folding_mobius}) to compute $\xi^{[d]}_i(m;X)$ and $\bbE[\sfR^{[d]}_{\arm}] = \tfrac{1}{n} \langle \xi^{[d]}, r^{(\arm)}\rangle$ more efficiently. 
Specifically, these approaches enumerate the partition lattice and apply M\"{o}bius inversion, thereby avoiding the summation over $\omega \in \Omega(\theta, \pi)$ that is intractable when $n$ and $d$ are large.

\section{Main results: Neumann correction for OLS-RA}\label{sec:theory-OLS}

\subsection{Assumptions}\label{sec:assumptions}
Recall that $\bone_n^\top X=0$ and $X^\top X=nI_p$ (Assumption \ref{assump:covariates}); treatment is completely randomized with $n_1+n_0=n$. 
To establish asymptotic theory, we consider a sequence of problems of increasing size $n$ with
\[
    \lim_{n \to \infty} \rho_n = \rho\in(0,1),
    \qquad\text{where}\qquad
    \rho_n \coloneqq \frac{n_1}{n}.
\]

We impose three assumptions described below.

\begin{assumption}[Non-asymptotic envelopes]\label{assumption:rates}
    For any $\delta\in(0,1)$ there exist an event $\Omega_\delta$ with $\Pr(\Omega_\delta)\ge 1-\delta$ and deterministic envelopes
    $\alpha_{n,p}(\delta),\ \beta_{n,p}(\delta),\ \gamma_{n,p}(\delta)>0$ and  such that on $\Omega_\delta$, for both $\arm \in \{0,1\}$,
    \begin{align*}
        \max_{\arm\in\{0,1\}}\ \|\Delta_{\arm}\| &\le \alpha_{n,p}(\delta),\\
        \max_{\arm\in\{0,1\}}\ \|u_{\arm}\|     &\le \beta_{n,p}(\delta),\\
        \max_{\arm\in\{0,1\}}\ \|\bar x_{\arm}\|&\le \gamma_{n,p}(\delta).
    \end{align*}
\end{assumption}

\begin{remark}
    In Appendix \ref{sec:assumption_rate}, we derive an explicit high-probability bound for $\opnorm{ \Delta_{\arm} }$ (hence, an admissible $\alpha_{n,p}(\delta)$) using matrix Bernstein inequality under sampling without replacement \citep{gross2010note}. 
    In particular, on $\Omega_\delta$, if $\alpha_{n,p}(\delta)<1$, the Neumann series for $(I_p - \Delta_{\arm})^{-1}$ converges for both arms.
\end{remark}

\begin{assumption}[Residual regularity]\label{assump:regularity}
    For each $\arm \in\{0,1\}$, the population residuals $r^{(\arm)}$, 
    cf. \eqref{eqn:population_OLS}, satisfy
    \begin{align*}
        &\frac{1}{n}\sum_{i=1}^n\bigl(r^{(\arm)}_i\bigr)^2  \to v_{\arm}\in(0,\infty),\\
        &\frac{1}{n}\sum_{i=1}^n\bigl(r^{(\arm)}_i\bigr)^2 \cdot \Ind \bigl\{|r^{(\arm)}_i|>\epsilon\sqrt{n} \bigr\} \to 0,
            \quad\forall \epsilon>0,
    \end{align*}
    and the finite-population covariance limit exists:
    \[
        v_\tau \coloneqq \lim_{n \to \infty} \frac{1}{n}\sum_{i=1}^n\bigl(r^{(1)}_i-r^{(0)}_i\bigr)^2 \in[0,\infty).
    \]    
\end{assumption}

\begin{remark}
    Assumption~\ref{assump:regularity} is about the standard Lindeberg--H\'{a}jek regularity conditions ensuring a finite-population central limit theorem (CLT) under sampling without replacement; see \citet{hajek1960limiting}. 
    It is \emph{mild}, as it is implied by either (1) a uniform $(2+\delta)$-moment bound for some $\delta>0$ or (2) $\max_i|r^{(\arm)}_i|=o(\sqrt{n})$ together with $\frac{1}{n}\sum_i (r^{(\arm)}_i)^2\to v_{\arm}\in(0,\infty)$. 
\end{remark}

\begin{assumption}[Diffuse leverage]\label{assump:leverage}
    Let $H_X\coloneqq X(X^\top X)^{-1}X^\top$ and $h^X \coloneqq \diag(H_X)$. 
    There exists $\kappa_n$ with
    \[
        \max_{i\in[n]} h^X_i  \leq  \kappa_n,
        \quad
        \kappa_n \lesssim \frac{p}{n},
        \quad 
        \kappa_n\log\bigl( \max\{p,\, n\} \bigr) = o(1).
    \]
\end{assumption}

\begin{remark}[Canonical rates]\label{rem:canonical_rates}
    Under Assumption~\ref{assump:leverage}, standard sampling‑without‑replacement concentration yields envelopes of the order
    \[
        \alpha_{n,p}(\delta) \asymp \sqrt{\frac{p\log(p/\delta)}{n}},~
        \beta_{n,p}(\delta) \asymp \sqrt{\frac{p}{n}},~
        \gamma_{n,p}(\delta) \asymp \sqrt{\frac{p}{n}}.
    \]
    The constants hidden in $\asymp$ can be chosen independent of $n,p$ once $\rho_n \to \rho \in (0,1)$.
\end{remark}

\subsection{Main results for OLS-RA}\label{sec:main-OLS}
We analyze the degree-$d$ Neumann‑corrected OLS estimator as defined in \eqref{eq:NPm-def-final}. 
Recall from \eqref{eq:master-decomp} that $\hresdim = \frac{1}{n_1} \bone_{n_1}^\top S_1 r^{(1)} - \frac{1}{n_0} \bone_{n_0}^\top S_0 r^{(0)}$ denotes the residual difference in means. 
Given $n, p, d, \delta$, define a shorthand
\begin{equation}\label{eqn:epsilon}
    \varepsilon_{n,p}^{[d]}(\delta) 
        \coloneqq 2 \cdot \frac{\alpha_{n,p}(\delta)^{\,d+1}}{1-\alpha_{n,p}(\delta)} \cdot \beta_{n,p}(\delta) \cdot \gamma_{n,p}(\delta).
\end{equation}

\begin{theorem}[Consistency and rate of convergence]\label{thm:main-OLS}
    Suppose that Assumption~\ref{assumption:rates} holds with $\alpha_{n,p}(\delta) < 1$. 
    If $p = o(n)$ and $\sup_{\arm \in \{0,1\}} \sup_n n^{-1} \bigl\| r^{(\arm)} \bigr\|_2^2 < \infty$, then for any $d\geq 0$,
    \begin{equation}\label{eqn:main_theorem}
        \htauols^{[d]}-\tau
            = \hresdim \, +\, \Delta_{n,p}^{[d]} \, +\, o_P\big(n^{-1/2}\big),
    \end{equation}
    and for any $\delta \in (0,1)$, the remainder term $\Delta_{n,p}^{[d]}$ satisfies with probability at least $1 - \delta$ that
    \begin{equation}\label{eq:two-term-envelope}
        \bigl| \Delta_{n,p}^{[d]} \bigr|
            \leq  \varepsilon_{n,p}^{[d]}(\delta).
    \end{equation}
\end{theorem}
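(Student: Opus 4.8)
The plan is to start from the master decomposition \eqref{eq:master-decomp}, substitute the definition \eqref{eq:NPm-def-final} of $\htauols^{[d]}$, and organize the difference $\htauols^{[d]} - \tau$ into three pieces: the residual difference-in-means $\hresdim$; a deterministic remainder $\Delta_{n,p}^{[d]}$ coming from the truncation tail of the Neumann series together with the gap between the design expectations $\bbE[\sfR_\arm^{[d']}]$ and the true Neumann terms $\sfR_\arm^{[d']}$; and an estimation-error piece $\sum_{d'=0}^d (\whsfR_\arm^{[d']} - \bbE[\sfR_\arm^{[d']}])$ that must be shown to be $o_P(n^{-1/2})$. Concretely, combining \eqref{eq:master-decomp}, \eqref{eqn:R-approx}, and \eqref{eq:NPm-def-final},
\[
    \htauols^{[d]} - \tau
        = \hresdim
        - \sum_{\arm \in \{0,1\}} (-1)^{\arm} \Biggl( \sum_{d'=d+1}^{\infty} \sfR_\arm^{[d']}
            + \sum_{d'=0}^{d} \bigl( \sfR_\arm^{[d']} - \whsfR_\arm^{[d']} \bigr) \Biggr),
\]
so I would set $\Delta_{n,p}^{[d]} \coloneqq - \sum_\arm (-1)^\arm \sum_{d' > d} \sfR_\arm^{[d']}$ (the Neumann tail, restricted to the high-probability event $\Omega_\delta$) and absorb everything else into the $o_P(n^{-1/2})$ term.

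For the tail bound \eqref{eq:two-term-envelope}, I would work on the event $\Omega_\delta$ of Assumption~\ref{assumption:rates}, where $\|\Delta_\arm\| \le \alpha_{n,p}(\delta) < 1$, $\|u_\arm\| \le \beta_{n,p}(\delta)$, and $\|\bar x_\arm\| \le \gamma_{n,p}(\delta)$. Using $\sfR_\arm^{[d']} = \bar x_\arm^\top \Delta_\arm^{d'} u_\arm$ and the submultiplicativity of the operator norm,
\[
    \Biggl| \sum_{d'=d+1}^{\infty} \sfR_\arm^{[d']} \Biggr|
        \le \|\bar x_\arm\| \, \|u_\arm\| \sum_{d'=d+1}^{\infty} \|\Delta_\arm\|^{d'}
        \le \gamma_{n,p}(\delta) \, \beta_{n,p}(\delta) \cdot \frac{\alpha_{n,p}(\delta)^{d+1}}{1 - \alpha_{n,p}(\delta)},
\]
and summing the two arms gives exactly $\varepsilon_{n,p}^{[d]}(\delta)$ as in \eqref{eqn:epsilon}. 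This part is essentially bookkeeping once the envelopes are in place.

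The main obstacle is the estimation-error term: showing $\whsfR_\arm^{[d']} - \bbE[\sfR_\arm^{[d']}] = o_P(n^{-1/2})$ for each fixed $d' \le d$. I would split this as $\bigl(\whsfR_\arm^{[d']} - \sfR_\arm^{[d']}\bigr) + \bigl(\sfR_\arm^{[d']} - \bbE[\sfR_\arm^{[d']}]\bigr)$. The first difference compares the sample-analog built from in-sample OLS residuals $\hat r_\arm^{(\arm)}$ against $\sfR_\arm^{[d']}$ built from population residuals $r^{(\arm)}$; since $\hat r_i^{(\arm)} = r_i^{(\arm)} - x_i^\top(\hat\beta_\arm - \beta_\arm) - (\hat\mu_\arm - \mu_\arm)$, I would plug in \eqref{eq:beta-sample-minus-pop} and control the cross terms using the envelopes of Remark~\ref{rem:canonical_rates} together with the leverage bound of Assumption~\ref{assump:leverage}; the hypothesis $\sup_n n^{-1}\|r^{(\arm)}\|_2^2 < \infty$ keeps the residual energy bounded, and one checks the resulting product of rates is $o(n^{-1/2})$ when $p = o(n)$. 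The second difference is the fluctuation of a degree-$(2d'+2)$ polynomial functional of the random sample $\cS_\arm$ around its mean; I would bound its variance by a Poincaré (spectral-gap) inequality on the Johnson graph — as flagged in the introduction — estimating the single-swap increment $\sfR_\arm^{[d']}(\cS) - \sfR_\arm^{[d']}(\cS')$ when one index is replaced, and showing $\Var(\sfR_\arm^{[d']}) = o(1/n)$ under the diffuse-leverage scaling, whence Chebyshev gives $o_P(n^{-1/2})$. Both sub-steps are where the detailed estimates of Appendix~\ref{sec:proof_main_theorem} will live; the rest of the proof is assembly. Finally, \eqref{eqn:main_theorem} follows by collecting: $\hresdim$ stays, $\Delta_{n,p}^{[d]}$ is the tail controlled by \eqref{eq:two-term-envelope}, and the estimation errors and the off-$\Omega_\delta$ event (probability $\le \delta$, arbitrary) are absorbed into $o_P(n^{-1/2})$.
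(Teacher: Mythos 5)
Your skeleton matches the paper's: the master decomposition \eqref{eq:master-decomp} combined with \eqref{eq:NPm-def-final} and \eqref{eqn:R-approx}, taking $\Delta_{n,p}^{[d]}$ to be the Neumann tail and bounding it on $\Omega_\delta$ by $2\gamma_{n,p}(\delta)\beta_{n,p}(\delta)\alpha_{n,p}(\delta)^{d+1}/(1-\alpha_{n,p}(\delta))=\varepsilon_{n,p}^{[d]}(\delta)$ is exactly the paper's Steps 1--2 (your $(-1)^{\arm}$ sign convention is flipped, but that is harmless for the absolute bound). The gap is in your treatment of the correction-estimation error. What must be shown to be $o_P(n^{-1/2})$ is $\whsfR^{[d']}_{\arm}-\sfR^{[d']}_{\arm}$ for each $d'\le d$, and your sub-step (1) proposes to obtain this from the identity $\hat r^{(\arm)}_i=r^{(\arm)}_i-x_i^\top(\hat\beta_\arm-\beta_\arm)-(\hat\mu_\arm-\mu_\arm)$ plus rate bookkeeping. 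That substitution only controls $\frac{1}{n_\arm}\sum_{i\in\Sa}\xi^{[d']}_i\,(\hat r^{(\arm)}_i-r^{(\arm)}_i)$. Even if $\hat r^{(\arm)}\equiv r^{(\arm)}$, the two quantities still differ by
\[
    \frac{1}{n_\arm}\sum_{i\in\Sa}\Bigl(\xi^{[d']}_i(n_\arm;X)-\zeta^{[d']}_i(\Sa;X)\Bigr)\,r^{(\arm)}_i,
    \qquad
    \zeta^{[d']}_i(\Sa;X)=\bar x_{\Sa}^\top\bigl(I_p-\Sigma_{\Sa}\bigr)^{d'}\bigl(x_i-\bar x_{\Sa}\bigr),
\]
i.e., the discrepancy between the deterministic Neumann weights and the random kernel. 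This is a genuine design fluctuation of order comparable to the quantity you are trying to kill; it is mean-(approximately-)zero but needs a variance/concentration argument, and no amount of plugging in residual identities or multiplying envelope rates addresses it. This is precisely where the paper spends its effort: it treats the whole difference $F_{\arm,d'}=\whsfR^{[d']}_{\arm}-\sfR^{[d']}_{\arm}$ as one functional of $\Sa$, bounds its one-swap increments (both the population-term increments and the sample-analog increments), and converts these into $\Var(F_{\arm,d'})=o(1/n)$ via the Johnson-graph Poincar\'e inequality, then applies Chebyshev.

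You do have the Poincar\'e/swap idea, but you aim it only at $\sfR^{[d']}_{\arm}-\bbE[\sfR^{[d']}_{\arm}]$, which by itself is neither sufficient nor even needed under your choice of $\Delta_{n,p}^{[d]}$: once you center at the design expectation you must also control the fluctuation of the $\xi$-weighted sample sum $\frac{1}{n_\arm}\sum_{i\in\Sa}\xi^{[d']}_i\hat r^{(\arm)}_i$ around $\frac{1}{n}\sum_i\xi^{[d']}_i r^{(\arm)}_i$, which your plan never does (the paper folds this into the same swap analysis via the bound involving $\|\xi\|_2$ and $\|r^{(\arm)}\|_2$). To repair the proposal, either apply the swap/Poincar\'e argument to the full difference $F_{\arm,d'}$, or center both $\whsfR^{[d']}_{\arm}$ and $\sfR^{[d']}_{\arm}$ at $\bbE[\sfR^{[d']}_{\arm}]$ and run the concentration argument for each. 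A secondary issue: your sub-step (1) invokes the canonical rates of Remark~\ref{rem:canonical_rates} and Assumption~\ref{assump:leverage}, but Theorem~\ref{thm:main-OLS} is stated (and proved in the paper) under Assumption~\ref{assumption:rates}, $p=o(n)$, and the residual second-moment bound alone, so the diffuse-leverage assumption should not be needed at this stage.
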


\begin{remark}
    The moment condition required in Theorem \ref{thm:main-OLS}, $\sup_{\arm \in \{0,1\}}\sup_n n^{-1} \bigl\| r^{(\arm)} \bigr\|_2^2 < \infty$, is strictly weaker than Assumption \ref{assump:regularity}. 
\end{remark}

In \eqref{eqn:main_theorem}, $\Delta_{n,p}^{[d]}$ collects the (deterministic) Neumann tail $\sum_{d'= d+1}^{\infty} \xbara^\top \, \Delta_{\arm}^{d'} \, u_{\arm}$ in \eqref{eqn:R-approx}, while the $o_P(n^{-1/2})$ term is the stochastic fluctuation in the correction-estimation error $\whsfR^{[d']}_{\arm} - \sfR^{[d']}_{\arm}$ across $d'=0,\dots,d$. 
See Section \ref{sec:proof_sketch} for a proof sketch and Appendix \ref{sec:proof_main_theorem} for a full proof.

\begin{theorem}[Asymptotic normality]\label{thm:AN-OLS}
    Suppose that Assumptions \ref{assumption:rates} and \ref{assump:regularity} hold. 
    If $p=o(n)$ and $\varepsilon_{n,p}^{[d]}(\delta) = o\big(n^{-1/2}\big)$, then
    \[
        \sqrt{n} \bigl(\htauols^{[d]}-\tau\bigr)\ \xRightarrow{d}\ \cN\bigl(0, \sigma^2_{\mathrm{res}}\bigr),
    \]
    where $\sigma^2_{\mathrm{res}} \coloneqq \frac{v_1}{\rho}+\frac{v_0}{1-\rho}-v_\tau$.
\end{theorem}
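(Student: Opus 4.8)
The plan is to derive asymptotic normality by combining the expansion of Theorem~\ref{thm:main-OLS} with the finite-population central limit theorem for a difference-in-means and Slutsky's theorem. First I would verify that Theorem~\ref{thm:main-OLS} applies: Assumption~\ref{assump:regularity} forces $\tfrac{1}{n}\sum_i (r^{(\arm)}_i)^2\to v_\arm\in(0,\infty)$, hence $\sup_n n^{-1}\|r^{(\arm)}\|_2^2<\infty$; together with $p=o(n)$ and Assumption~\ref{assumption:rates} with $\alpha_{n,p}(\delta)<1$ (implicit in the finiteness of the positive envelope $\varepsilon_{n,p}^{[d]}(\delta)$, and needed for the Neumann series to converge), all hypotheses of Theorem~\ref{thm:main-OLS} are in force. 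It then gives, for the fixed degree $d$,
\[
    \sqrt{n}\,\bigl(\htauols^{[d]}-\tau\bigr)=\sqrt{n}\,\hresdim+\sqrt{n}\,\Delta_{n,p}^{[d]}+o_P(1),
\]
so the proof reduces to showing (i) $\sqrt{n}\,\Delta_{n,p}^{[d]}=o_P(1)$ and (ii) $\sqrt{n}\,\hresdim\xRightarrow{d}\cN(0,\sigma^2_{\mathrm{res}})$.

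For (i) I would run a $\delta$-sandwich argument. Fix $\eta>0$ and $\delta\in(0,1)$. By \eqref{eq:two-term-envelope}, $\Pr\bigl(\sqrt{n}\,|\Delta_{n,p}^{[d]}|>\sqrt{n}\,\varepsilon_{n,p}^{[d]}(\delta)\bigr)\le\delta$ for every $n$; since $\varepsilon_{n,p}^{[d]}(\delta)=o(n^{-1/2})$, eventually $\sqrt{n}\,\varepsilon_{n,p}^{[d]}(\delta)<\eta$, so $\limsup_n\Pr\bigl(\sqrt{n}\,|\Delta_{n,p}^{[d]}|>\eta\bigr)\le\delta$. Letting $\delta\downarrow 0$ yields $\sqrt{n}\,\Delta_{n,p}^{[d]}\to 0$ in probability. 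This is the single point at which the $\delta$-indexed envelope must be turned into an unconditional statement; under the canonical rates of Remark~\ref{rem:canonical_rates} the requirement $\varepsilon_{n,p}^{[d]}(\delta)=o(n^{-1/2})$ is precisely the growth condition $p^{d+3}(\log p)^{d+1}=o(n^{d+2})$ of Corollary~\ref{cor:diffuse}, with only a $\log(1/\delta)$ factor entering.

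For (ii) I would reduce $\hresdim$ to a single SRSWOR sum and invoke the Hájek CLT. Since $\bone_n^\top r^{(\arm)}=0$ (population normal equations) and $\Szero=[n]\setminus\Sone$,
\[
    \hresdim=\frac{1}{n_1}\sum_{i\in\Sone}r^{(1)}_i-\frac{1}{n_0}\sum_{i\in\Szero}r^{(0)}_i=\sum_{i\in\Sone}c_i,\qquad c_i\coloneqq\frac{r^{(1)}_i}{n_1}+\frac{r^{(0)}_i}{n_0},
\]
so $\hresdim$ is an SRSWOR sum (sample size $n_1$) of fixed scores with population mean $\bar c=\tfrac{1}{n}\sum_i c_i=0$. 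The Lindeberg--Hájek condition for $\{c_i\}$ follows from Assumption~\ref{assump:regularity}, because $c_i$ is a linear combination of $r^{(1)}_i,r^{(0)}_i$ with coefficients of order $n^{-1}$ (as $n_\arm\asymp n$ from $\rho_n\to\rho\in(0,1)$), so the truncated second moments of $\sqrt{n}\,c_i$ inherit the vanishing-tail property of the $r^{(\arm)}_i$. The Hájek CLT \citep{hajek1960limiting} then applies, with variance
\[
    n\,\Var(\hresdim)=\frac{n_1 n_0}{n-1}\sum_{i=1}^n\Bigl(\frac{r^{(1)}_i}{n_1}+\frac{r^{(0)}_i}{n_0}\Bigr)^2\longrightarrow\frac{v_1}{\rho}+\frac{v_0}{1-\rho}-v_\tau=\sigma^2_{\mathrm{res}},
\]
using the SRSWOR variance identity, $n/n_\arm\to 1/\rho$ or $1/(1-\rho)$, and $\tfrac{1}{n}\sum_i r^{(1)}_i r^{(0)}_i\to\tfrac{1}{2}(v_1+v_0-v_\tau)$ (polarization of the three limits in Assumption~\ref{assump:regularity}). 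Combining (i), (ii), and Slutsky's theorem completes the argument. Alternatively, (ii) may simply be cited from the discussion following \eqref{eqn:neumann_remainder}, where $\sqrt{n}\,\hresdim\xRightarrow{d}\cN(0,\sigma^2_{\mathrm{res}})$ is already recorded under Assumption~\ref{assump:regularity}.

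The main obstacle is not a new analytic difficulty---the substantive work lives in Theorem~\ref{thm:main-OLS}---but rather two bookkeeping points: (a) the $\delta$-sandwich in (i), which is routine given the hypothesis but relies on interpreting $\varepsilon_{n,p}^{[d]}(\delta)=o(n^{-1/2})$ as holding for every $\delta\in(0,1)$ (equivalently, for arbitrarily small $\delta$, which suffices since $\varepsilon_{n,p}^{[d]}(\delta)$ is monotone in $\delta$); and (b) checking that the Lindeberg--Hájek regularity of $r^{(\arm)}$ in Assumption~\ref{assump:regularity} transfers to the aggregated scores $c_i$, which is immediate because $n_\arm/n$ stays bounded away from $0$ and $1$. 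Any further sharpening would have to come from improving the envelope $\varepsilon_{n,p}^{[d]}(\delta)$ feeding into (i), which is the province of Theorem~\ref{thm:main-OLS} rather than of this argument.
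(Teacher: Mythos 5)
Your proposal is correct and follows essentially the same route as the paper's proof: invoke Theorem~\ref{thm:main-OLS} to reduce $\htauols^{[d]}-\tau$ to $\hresdim$ plus negligible terms, apply the finite-population (H\'ajek) CLT to $\hresdim$ under Assumption~\ref{assump:regularity}, and conclude by Slutsky. The paper merely sketches this, while you additionally spell out the $\delta$-sandwich for turning the envelope bound on $\Delta_{n,p}^{[d]}$ into $o_P(n^{-1/2})$ and the explicit SRSWOR variance/polarization computation giving $\sigma^2_{\mathrm{res}}$ — both correct fillings of details the paper leaves to citation.
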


\begin{proof}[Proof sketch of Theorem \ref{thm:AN-OLS}]
    By Theorem~\ref{thm:main-OLS}, $\htauols^{[d]}-\tau = \hresdim + o_P(n^{-1/2})$. 
    Since $\hresdim$ is a difference of sample means (of population residuals) under complete randomization, Assumption \ref{assump:regularity} yields the finite-population CLT, namely, $\sqrt{n} \hresdim \xRightarrow{d}\ \cN\bigl(0, \sigma^2_{\mathrm{res}}\bigr)$; see \citet{lin_ols2013}. 
    The remaining $o_P(n^{-1/2})$ term is negligible.
\end{proof}

\begin{corollary}[$p$-scaling under diffuse-leverage]\label{cor:diffuse}
    Suppose Assumptions \ref{assumption:rates}, \ref{assump:regularity}, \ref{assump:leverage} hold. 
    Then $\htauols^{[d]}$ is $\sqrt{n}$-consistent and asymptotically normal with variance $\sigma^2_{\mathrm{res}}$ when $p = o\bigl(n^{\frac{d+2}{d+3}}\bigr)$, ignoring logs. 
\end{corollary}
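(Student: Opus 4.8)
\textbf{Proof proposal for Corollary~\ref{cor:diffuse}.}
The plan is to reduce the statement to Theorem~\ref{thm:AN-OLS} by verifying its two hypotheses --- $p=o(n)$ and $\varepsilon_{n,p}^{[d]}(\delta)=o(n^{-1/2})$ --- under the diffuse-leverage scaling, the only real work being to install the canonical envelope rates of Remark~\ref{rem:canonical_rates} under Assumption~\ref{assump:leverage}. First I would record the leverage identity $\|x_i\|_2^2 = n\,h^X_i \le n\kappa_n \lesssim p$, uniformly in $i$, which is the single structural input Assumption~\ref{assump:leverage} supplies. For $\alpha_{n,p}(\delta)$: write $\Sigma_{\arm}$ as an SRSWOR sum of the $n_{\arm}$ rank-one blocks $x_ix_i^\top$ (up to the lower-order centering correction $\bar x_{\arm}\bar x_{\arm}^\top$, controlled separately), each of operator norm at most $n\kappa_n \lesssim p$, with mean $\approx I_p$ after the finite-population correction; applying the matrix Bernstein inequality for sampling without replacement \citep{gross2010note} yields $\|\Delta_{\arm}\| \lesssim \sqrt{p\log(p/\delta)/n}$ on a $(1-\delta)$-event, i.e.\ $\alpha_{n,p}(\delta)\asymp \sqrt{p\log(p/\delta)/n}$. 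The condition $\kappa_n\log(\max\{p,n\})=o(1)$ forces $\alpha_{n,p}(\delta)\to 0$, hence $\alpha_{n,p}(\delta)<1$ for all large $n$, so the Neumann series converges and Theorems~\ref{thm:main-OLS}--\ref{thm:AN-OLS} are applicable.

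Next I would dispatch $\beta_{n,p}(\delta)$ and $\gamma_{n,p}(\delta)$. Both $u_{\arm}=\frac{1}{n_{\arm}}\sum_{i\in\Sa}(x_i-\bar x_{\arm})r^{(\arm)}_i$ and $\bar x_{\arm}=\frac{1}{n_{\arm}}\sum_{i\in\Sa}x_i$ are centered SRSWOR averages (using $\bone_n^\top X=0$, $X^\top r^{(\arm)}=0$, $\bone_n^\top r^{(\arm)}=0$), so their second moments are $\bbE\|\bar x_{\arm}\|_2^2 \asymp p/n$ and, via the leverage bound and Assumption~\ref{assump:regularity}, $\bbE\|u_{\arm}\|_2^2 \lesssim \frac{1}{n_{\arm}}\cdot\frac{1}{n}\sum_i \|x_i\|_2^2 (r^{(\arm)}_i)^2 \lesssim \frac{p}{n_{\arm}}\cdot\frac{1}{n}\|r^{(\arm)}\|_2^2 \asymp p/n$; a fixed-$\delta$ Markov bound then gives $\beta_{n,p}(\delta),\gamma_{n,p}(\delta)\asymp\sqrt{p/n}$, with constants uniform once $\rho_n\to\rho\in(0,1)$. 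Substituting these three rates into \eqref{eqn:epsilon}, and using that $(1-\alpha_{n,p}(\delta))^{-1}$ is bounded, gives
\[
    \varepsilon_{n,p}^{[d]}(\delta)
        \asymp \alpha_{n,p}(\delta)^{\,d+1}\,\beta_{n,p}(\delta)\,\gamma_{n,p}(\delta)
        \asymp \Bigl(\tfrac{p\log p}{n}\Bigr)^{\frac{d+1}{2}}\cdot\tfrac{p}{n}
        = \frac{p^{\frac{d+3}{2}}(\log p)^{\frac{d+1}{2}}}{n^{\frac{d+3}{2}}}.
\]
Hence $\varepsilon_{n,p}^{[d]}(\delta)=o(n^{-1/2})$ if and only if $p^{\frac{d+3}{2}}(\log p)^{\frac{d+1}{2}}=o(n^{\frac{d+2}{2}})$, i.e.\ (squaring) $p^{d+3}(\log p)^{d+1}=o(n^{d+2})$, which ignoring logarithms is exactly $p=o(n^{(d+2)/(d+3)})$. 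Since $(d+2)/(d+3)<1$, this scaling also entails $p=o(n)$ and $\alpha_{n,p}(\delta)<1$ eventually, so all hypotheses of Theorem~\ref{thm:AN-OLS} hold and $\sqrt{n}(\htauols^{[d]}-\tau)\xRightarrow{d}\cN(0,\sigma^2_{\mathrm{res}})$ with $\sigma^2_{\mathrm{res}}=\tfrac{v_1}{\rho}+\tfrac{v_0}{1-\rho}-v_\tau$.

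The substantive obstacle is entirely in the first two paragraphs --- the deferred appendix argument establishing the matrix Bernstein bound for $\Sigma_{\arm}$ under SRSWOR and tracking how the diffuse-leverage hypothesis converts the per-sample operator-norm budget $\|x_ix_i^\top\|\le n\kappa_n\lesssim p$ into the spectral envelope $\sqrt{p\log p/n}$, together with the moment computations for $u_{\arm}$ and $\bar x_{\arm}$; once the three envelopes are in hand the corollary is pure arithmetic and an invocation of Theorem~\ref{thm:AN-OLS}. One minor caveat worth stating explicitly: Remark~\ref{rem:canonical_rates} drops the logarithm from $\beta$ and $\gamma$, which is legitimate only at a fixed confidence level $\delta$ (a Markov, not a tail, bound); this is harmless here because Theorem~\ref{thm:AN-OLS} and hence the corollary need the envelope condition only for a single fixed $\delta\in(0,1)$.
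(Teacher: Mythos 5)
Your proposal is correct and follows essentially the same route as the paper: the paper's proof of Corollary~\ref{cor:diffuse} simply plugs the canonical envelopes of Remark~\ref{rem:canonical_rates} (derived in Appendix~\ref{sec:assumption_rate} via the SRSWOR matrix Bernstein bound and scalar concentration, exactly as you sketch) into $\varepsilon_{n,p}^{[d]}(\delta)$ from \eqref{eqn:epsilon} to get the rate $(p/n)^{(d+3)/2}(\log p)^{(d+1)/2}$, and then checks that $p^{d+3}(\log p)^{d+1}=o(n^{d+2})$ delivers the premise of Theorem~\ref{thm:AN-OLS}. Your additional observations --- that $(d+2)/(d+3)<1$ ensures $p=o(n)$ and $\alpha_{n,p}(\delta)<1$ eventually, and that the envelope condition is only needed at a fixed $\delta$ --- are consistent with the paper's argument and do not change the approach.
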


Note the allowable growth regime for $p$ of asymptotic normality expands with each additional degree of correction (e.g., $d=0$: $p=o(n^{2/3})$; $d=1$: $p=o(n^{3/4})$).

\begin{proof}[Proof sketch of Corollary \ref{cor:diffuse}]
    Under the canonical rates in Remark \ref{rem:canonical_rates}, the envelope in \eqref{eq:two-term-envelope} becomes
    \[
        \varepsilon_{n,p}^{[d]}(\delta)
        \ =\ O_P\biggl(\, \Bigl(\frac{p}{n}\Bigr)^{\frac{d+3}{2}} (\log p)^{\frac{d+1}{2}} \,\biggr).
    \]
    Consequently, for any fixed correction degree $d\geq 0$, if
    \begin{equation}\label{eq:diffuse-scaling}
        p^{ d+3}(\log p)^{ d+1} \ =\ o\big(n^{ d+2}\big),
    \end{equation}
    then the premise of Theorem \ref{thm:AN-OLS} is satisfied.
\end{proof}

\paragraph{Discussion: What enables the gains?} 
Two ingredients drive the improvement. 
\begin{enumerate}[label=(\roman*)]
    \item
    \emph{Geometric bias control.} 
    Truncating the Neumann series cancels successive powers of the perturbation $\Delta_{\arm}$; after degree $d$ the bias tail $\sum_{d'\ge d+1}\bar x_{\arm}^\top(-\Delta_{\arm})^{d'}u_{\arm}$ is bounded on $\Omega_\delta$ by $\alpha_{n,p}^{\,d+1}\,\beta_{n,p}\,\gamma_{n,p}$. 
    
    \item
    \emph{Stable estimation of corrections.} 
    A single treatment–control swap changes $(\bar x_{\arm},\Delta_{\arm},u_{\arm})$ by $O(1/n)$, so the average swap sensitivity of each correction term is $O(1/n)$; the Poincar\'e bound on the Johnson graph yields variance $O(1/n)$ and an $o_P(n^{-1/2})$ aggregate estimation error for fixed $d$ (Appendix \ref{sec:Poincare_Johnson}). 
\end{enumerate}

Together, these imply that under diffuse leverage with $\alpha_{n,p}\asymp\sqrt{p/n}$, each added degree enlarges the admissible dimension of $p$ to $p=o \big(n^{(d+2)/(d+3)}\big)$ (up to logs).

\subsection{Proof sketch of Theorem \ref{thm:main-OLS}}\label{sec:proof_sketch}

Recall $\htauols^{[d]} \coloneqq \hat\tau_{\mathrm{OLS}} + \sum_{d'=0}^{d} \left( \whsfR_1^{[d']}-\whsfR_0^{[d']} \right)$, cf. \eqref{eq:NPm-def-final}, and $\hat\tau_{\mathrm{OLS}}-\tau = \hresdim - \bigl(\sfR_1-\sfR_0\bigr)$, cf. \eqref{eq:master-decomp}. 
On $\Omega_{\delta}$, $\opnorm{\Delta_{\arm}} \leq \alpha_{n,p}(\delta) < 1$, so $(I_p - \Delta_{\arm})^{-1} = \sum_{d'=0}^\infty \Delta_{\arm}^{d'}$ converges, cf. \eqref{eqn:R-approx}, and therefore, $\sfR_{\arm} = \sum_{d' = 0}^{\infty} \sfR_{\arm}^{[d']}$. 
Defining $\sfT^{[d]}_{\arm} \coloneqq \sum_{d' \geq d+1} \sfR^{[d']}_{\arm}$,  we write $\sfR_{\arm} = \sum_{d' = 0}^{d} \sfR_{\arm}^{[d']} + \sfT^{[d]}_{\arm}$. 
Then, we obtain
\begin{align*}
    \htauols^{[d]} - \tau 
        &= 
        \hresdim
        + \underbrace{\left\{ \sum_{d'=0}^d\left( \whsfR_1^{[d']} \!-\!  \sfR_1^{[d']}\right) - \left( \whsfR_0^{[d']} \!-\! \sfR_0^{[d']} \right) \right\}}_{\text{correction-estimation error}}
        - \underbrace{\left( \sfT_1^{[d]} - \sfT_0^{[d]} \right)}_{\text{Neumann tail}}.
\end{align*}

\begin{itemize}
    \item 
    \emph{Tail term.} 
    On $\Omega_\delta$, $|\sfT^{[d]}_{\arm}|\le \|\bar x_{\arm}\|\,\|\Delta_{\arm}\|^{d+1}(1-\|\Delta_{\arm}\|)^{-1}\|u_{\arm}\|\le \frac{\alpha_{n,p}^{\,d+1}}{1-\alpha_{n,p}}\beta_{n,p}\gamma_{n,p}$, so $|\Delta_{n,p}^{[d]}|\le \varepsilon_{n,p}^{[d]}(\delta)$.
    \item 
    \emph{Correction-estimation error.} 
    For each $\arm \in \{0,1\}$ and each $d' \in \{0, 1, \dots, d\}$, define $F_{\arm, d'} \coloneqq \whsfR^{[d']}_{\arm} - \sfR^{[d']}_{\arm}$. 
    Viewing $F_{\arm, d'}$ as a function of random subset $\Sa$, we analyze its swap sensitivity, $F_{\arm, d'}(\Sa') - F_{\arm, d'}(\Sa)$ with $\Sa' = \Sa'(j,k) = (\Sa \setminus \{j\}) \cup \{k\}$ for $(j,k) \in \Sa \times \Sa^c$. 
    Design‑based bounds give average swap sensitivity $O(1/n)$; the Poincar\'e inequality on Johnson‑graph then yields $\Var(F_{\arm, d'}) = o(1/n)$ when $p = o(n)$; see Appendix \ref{sec:Poincare_Johnson}. 
    Since $d$ is fixed, summing over $d'=0,\dots,d$ and $\arm \in \{0,1\}$ preserves $o(1/n)$ variance, and Chebyshev implies an $o_P(n^{-1/2})$ contribution.
\end{itemize}

See Appendix \ref{sec:proof_main_theorem} for a complete proof.

\section{Experiments}\label{sec:experiments}

\subsection{Setup}

\paragraph{Data generative process.}
We set $n = 500$, $n_1 = \rho n$ with $\rho = 0.3$. 
For each outer replicate ($R = 50$), we generate a single ``master'' matrix $\widetilde{X} \in \RR^{n \times n}$ with i.i.d.\ entries from standard Gaussian or $t(2)$, which we hold fixed for a controlled comparison across covriate dimensionality $p$. 
For each exponent $\gamma \in \{ 0, 0.1, \dots, 0.7\}$, let $p = \lceil n^{\gamma} \rceil$ and form $X\in\RR^{n\times p}$ by taking the first $p$ columns of $\widetilde X$. 
We then column–center and rescale $X$ so that $\bone_n^\top X=0$ and $X^\top X=n I_p$, matching Assumption~\ref{assump:covariates}.

With $X$ instantiated, potential outcomes are generated as $y^{(1)}=X\beta^*_1+\varepsilon^{(1)}$ and $y^{(0)}=X\beta^*_0+\varepsilon^{(0)}$. 
Because $\hat{\beta}_{\arm} - \beta^*_{\arm} = (X_\arm^{\top} X_\arm)^{-1} X_\arm^{\top} \varepsilon^{(\arm)}$ does not depend on $\beta^*_\arm$, we may take $\beta^*_1, \beta^*_0$ arbitrarily without affecting the bias and variance of the estimates $\hat{\tau}$; here, we generate a random unit-norm vector $\beta^*$ and set $\beta^*_1 = \beta^*_0 = \beta^*$ for comparison of the (Neumann-corrected) OLS-RA estimators against the DiM baseline. 
We consider two types of residual models for $\varepsilon$ as follows:


\begin{enumerate}[label=(RM\arabic*)]
    \item\label{item:typical} 
    \emph{Typical case.} 
    $\{ \varepsilon^{(1)}, \varepsilon^{(0)} \}$ are independent random vectors with i.i.d. entries from $\cN(0,1)$. 

    \item\label{item:worst} 
    \emph{Worst case.} 
    Let $H\coloneqq X(X^\top X)^{-1}X^\top$ and $h\coloneqq \diag(H)$. 
    Set $\varepsilon^{(0)}=\varepsilon$ and $\varepsilon^{(1)}=3\varepsilon$, where 
    \begin{equation}\label{eqn:res_worst}
    \begin{aligned}
        \varepsilon ~\in~& \arg\max_{\varepsilon'\in\RR^n}
            \Bigl(\tfrac{3n_0}{n_1}-\tfrac{n_1}{n_0}\Bigr)\,\bigl|h^\top \varepsilon'\bigr|\\
        &\text{s.t.}\quad \tfrac{1}{n}\|\varepsilon'\|_2^2=1,\ \ X^\top\varepsilon'=0,\ \ \bone_n^\top\varepsilon'=0.
    \end{aligned}
    \end{equation}
    An explicit maximizer is 
    $\varepsilon^\star = \sqrt{n} \frac{ (I_n - H_X)\bigl( h - \tfrac{p}{n}\bone_n \bigr) }{ \big\| (I_n - H_X)\bigl( h - \tfrac{p}{n}\bone_n \bigr) \big\|_2 }$, which is unique up to sign; if $(I_n-H_X)(h-\tfrac{p}{n}\bone_n)=0$, any feasible $\varepsilon$ satisfies \eqref{eqn:res_worst}. 
    This choice of $\varepsilon = \varepsilon^\star$ maximizes the leading (zeroth–order) term in the Neumann-series expansion of the remainder in \eqref{eq:master-decomp}; cf.\ \citet[Eq.~(16)]{lei2021regression}. 
\end{enumerate}

We keep $(X, y^{(1)}, y^{(0)})$ fixed while sampling treatment assignments (complete randomization) and compare (Neumann‑corrected) OLS‑RA estimators to the DiM baseline. 
Given $X$ and $(y^{(1)},y^{(0)})$, we draw $N=2000$ treatment assignments $T^{(k)}\in\{0,1\}^n$ uniformly with $\bone_n^\top T^{(k)}=n_1$. 
For each $k$, the observed outcome is $Y^{(k)}=T^{(k)}\odot y^{(1)}+(1-T^{(k)})\odot y^{(0)}$. 
This entire process is repeated $R = 50$ times with different random seeds.

\paragraph{Evaluation.} 
For each instance, we compute $\hat\tau_{\mathrm{DiM}}$, the arm‑wise OLS‑RA estimator $\hat\tau_{\mathrm{OLS}}$, and the Neumann‑corrected estimators $\hat\tau_{\mathrm{OLS}}^{[d]}$ for $d \in \{0,1,2,3\}$.  
Let $\{\hat\tau_k\}_{k=1}^N$ be the $N$ estimates of a given method. 
We report 
\begin{enumerate}[label=(\roman*)]
    \item 
    the normalized absolute bias  
    $\Bigl|\frac{1}{N}\sum_{k=1}^N \hat\tau_k - \tau\Bigr|\cdot\frac{\sqrt n}{\sigma_n}$,
    and
    \item 
    the normalized empirical variance 
    $\Var(\hat{\tau}_1, \dots, \hat{\tau}_N)\cdot \frac{n}{\sigma_n^2}$ (sample variance across replicates), 
\end{enumerate}
where $\sigma_n^2 = n_1^{-1} \| r^{(1)} \|_2^2 + n_0^{-1} \| r^{(0)} \|_2^2 - n^{-1} \| r^{(1)} - r^{(0)} \|_2^2$ with $r^{(a)}$ the \emph{population} OLS residuals. 
This scaling is for comparability only and is not used by the estimators. 
For each $p$, we compute both measures over the $N$ assignments, repeat the outer replication (varying $\widetilde{X}$), and report the median along with the 10\%-90\% envelope across $R = 50$ replicates.

\paragraph{Computing environment.}
All experiments were run on a single Apple Mac mini (M4 Pro, Apple silicon) under macOS using Python (NumPy/SciPy; CPU only.

\subsection{Results}
Here we focus on the “worst-case” construction in \ref{item:worst}, deferring additional results to Appendix~\ref{sec:additional_experiments}\footnote{In the typical case, differences are smaller, as expected for linear outcomes where $\hat\tau_{\mathrm{OLS}}$ has near‑zero bias.}.

\paragraph{Experiment 1. Bias and variance against correction degrees.}
First, we consider the setting where $\widehat{X}$ has i.i.d. standard Gaussian entries (before centering/normalization). 
Figure~\ref{figure:experiment.1} shows that the normalized absolute bias of $\hat\tau_{\mathrm{OLS}}^{[d]}$ decreases monotonically with $d$, and is already small at $d=1$ or $2$.  
On the other hand, the normalized empirical variance slightly increases with $d$ but changes little across $d$ and remains comparable to $\hat\tau_{\mathrm{OLS}}$. 
We observe that the OLS-RA and Neumann-corrected estimators all exhibit smaller variance than the difference-in-means variance baseline, as the part of outcomes explainable by covariates is neutralized to reduce the variance. 

\begin{figure}[t]
    \centering
    \includegraphics[width=0.42\linewidth]{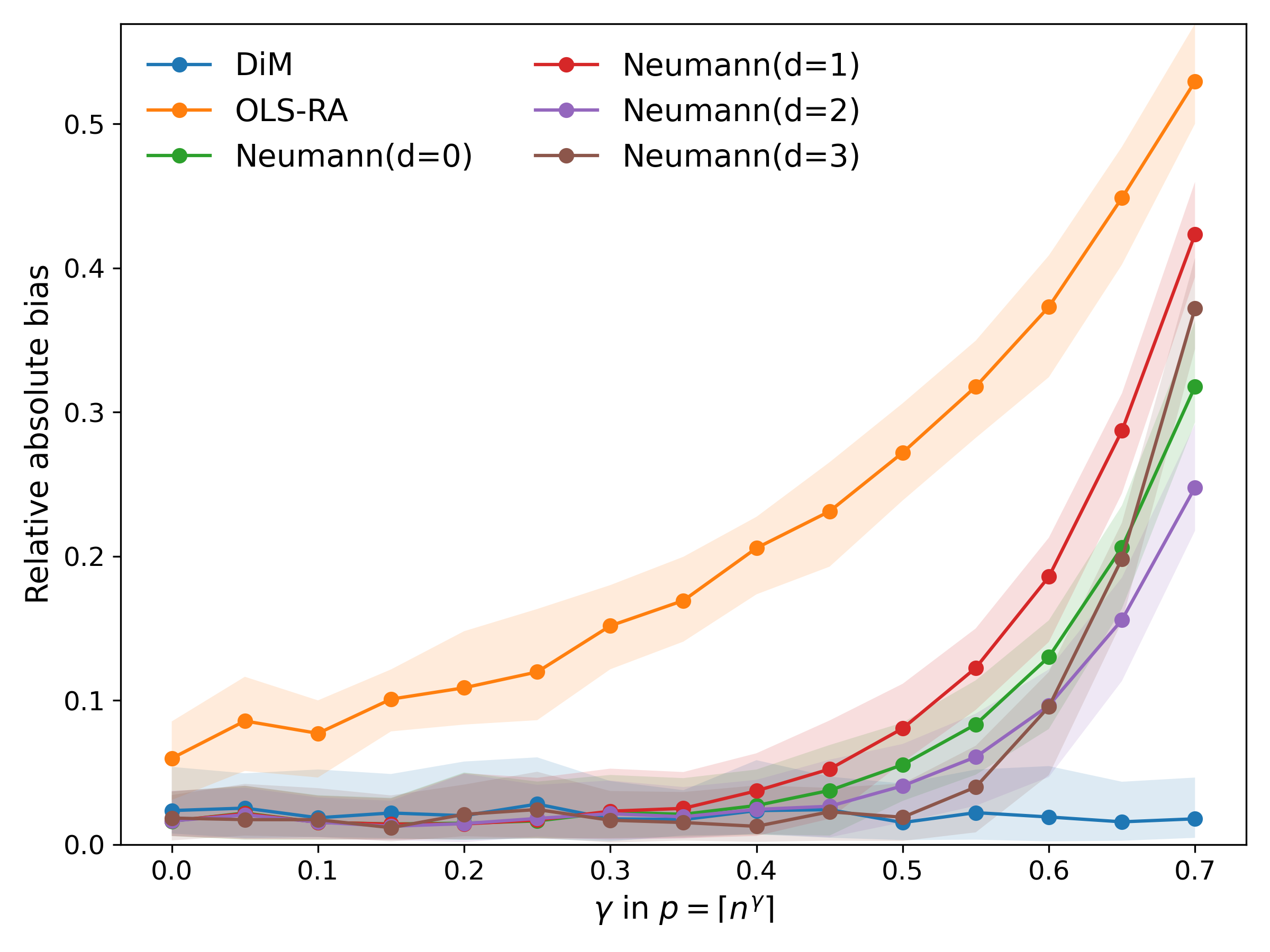}\qquad
    \includegraphics[width=0.42\linewidth]{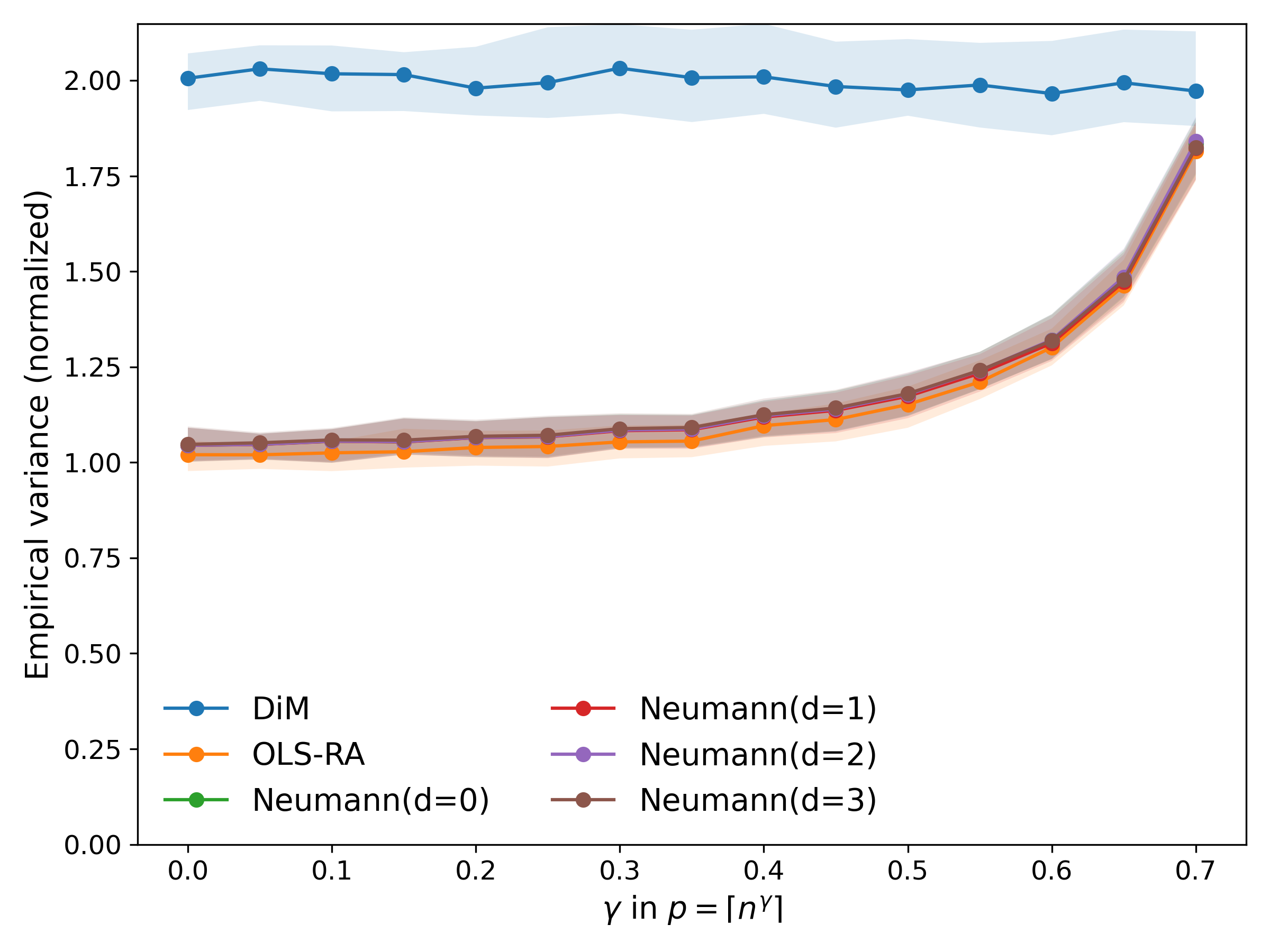}
    \caption{ \small
        Comparison of $\hat\tau_{\mathrm{OLS}}^{[d]}$ for $d \in \{0,1,2,3\}$ against $\hat\tau_{\mathrm{OLS}}$ and $\htaudim$ under Gaussian random $X$ and the worst‑case residual model \ref{item:worst} at $n = 500$, $n_1 = 150$, $N = 2000$.  
        The shaded bands indicate $10\%$--$90\%$ regions across the $R=50$ outer repetitions.
        \textbf{(Left):} Normalized absolute bias tends to decrease as the correction degree $d$ increases. 
        \textbf{(Right):} Normalized empirical variance slightly increases as $d$ grows, but the OLS-RA and Neumann-corrected estimators all exhibit smaller variance than the difference-in-means variance baseline, as regression adjustment \emph{compensates for} the part of outcomes explainable by covariates, thereby reducing the variance. 
        } 
    \label{figure:experiment.1}
\end{figure}

\paragraph{Experiment 2. Heavy-tailed covariates.}
We repeat Experiment~1 with the entries of $\widetilde{X}$ drawn i.i.d. from $t(2)$ distribution, to inspect the effects of heavy-tailed covariates. 
Compared to the Gaussian case above, the bias and the variance exhibit more erratic patterns; we observe different (complementary) patterns for even‑ and odd‑degree corrections.
From Figure~\ref{figure:experiment.2}, observe an alternating bias–variance trade‑off as $d$ increases: moving from $d-1$ to an \emph{even} $d$ reduces normalized bias at some cost in normalized variance, while moving to an \emph{odd} $d$ tends to reverse that trade‑off. 
This is somewhat anticipated because the convergence and stability of the Neumann-series expansion \ref{eqn:R-approx} hinges on the premise $\opnorm{\Delta_{\arm}}<1$, and for the heavy-tailed covariates, $\opnorm{\Delta_{\arm}}$ can be close to, or even exceed $1$, making higher-degree corrections unstable.

\begin{figure}[t]
    \centering
    \includegraphics[width=0.42\linewidth]{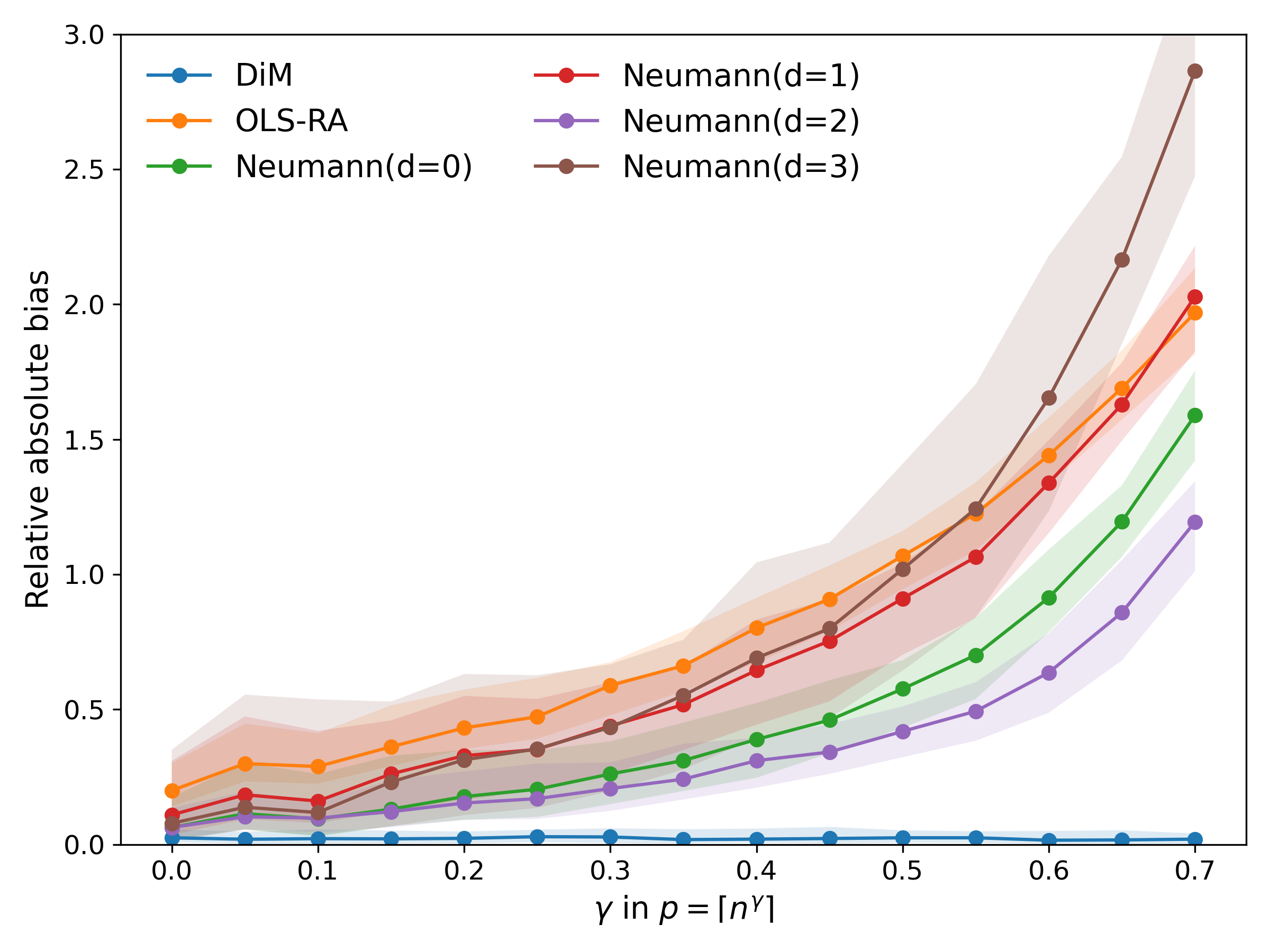}\qquad
    \includegraphics[width=0.42\linewidth]{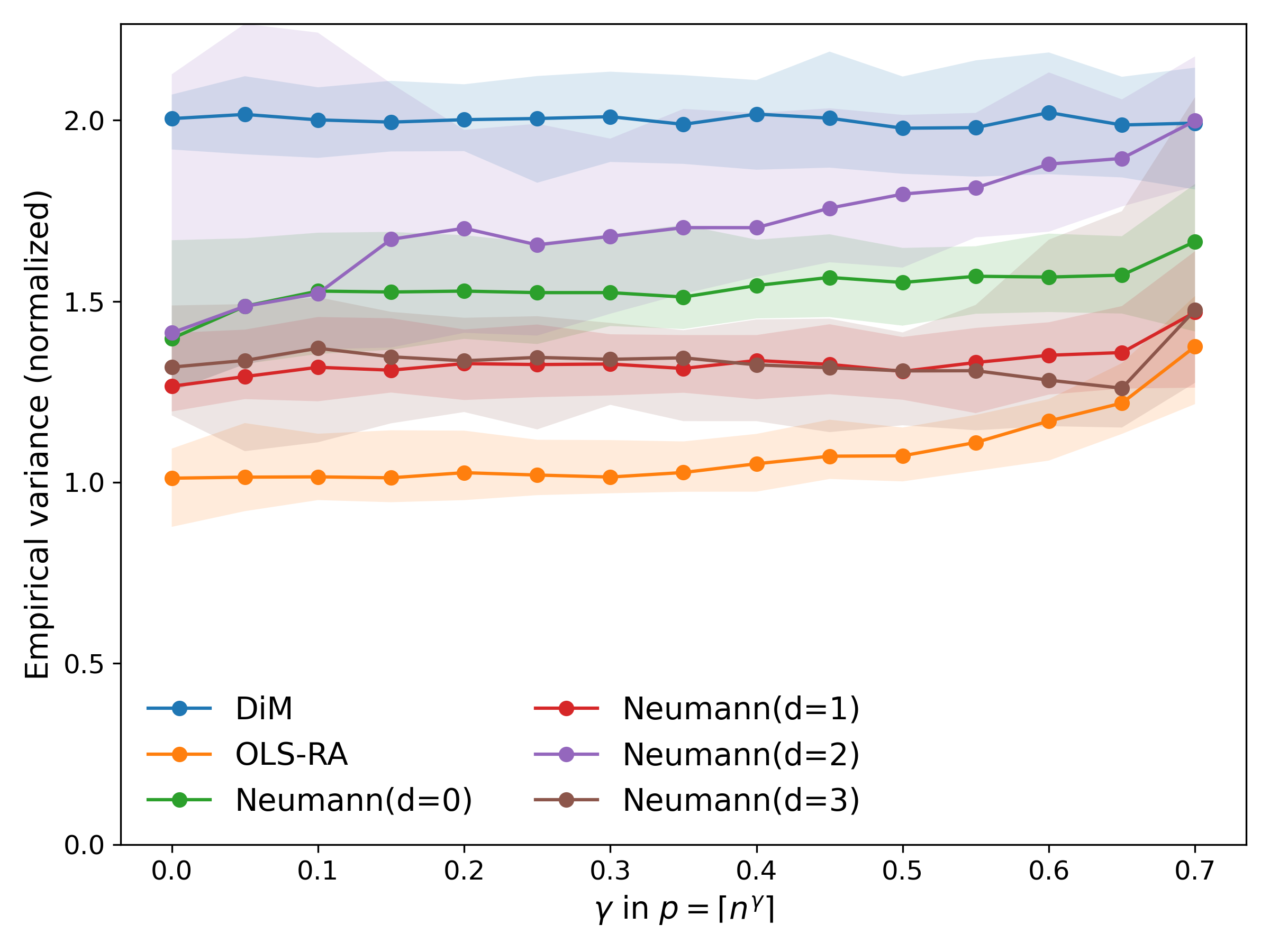}
    \caption{ \small
        Comparison of $\hat\tau_{\mathrm{OLS}}^{[d]}$ for $d \in \{0,1,2,3\}$ against $\hat\tau_{\mathrm{OLS}}$ and $\htaudim$ under $t(2)$ random $X$ and the worst‑case residual model\ref{item:worst} at $n = 500$, $n_1 = 150$, $N = 2000$.  
        The shaded bands indicate $10\%$--$90\%$ regions across the $R=50$ outer repetitions.
        \textbf{(Left):} Normalized absolute bias. 
        \textbf{(Right):} Normalized empirical variance.
        We observe an alternating pattern: even‑degree corrections further reduce bias at the expense of higher variance (relative to $d-1$), while odd‑degree corrections tend to lower variance but can increase absolute bias.
        } 
    \label{figure:experiment.2}
\end{figure}

\paragraph{Experiment 3. Covariate trimming.}
Recall that in randomized experiments, regression adjustment does not require a correctly specified outcome model; even when the linear outcome model is not correct, the RA never hurts the estimator. 
Thus, we may \emph{trim off} covariates via Winsorization, to regularize the data so that the trimmed covariates have smaller $\opnorm{\Delta_{\arm}}$ and more well‑behaved maximum leverage score (closer to $p/n$, cf.\ Assumption~\ref{assump:leverage}), which governs the asymptotic properties of corrected estimators; see \citet[Section~4.5]{lei2021regression} for related discussion. 
For the $t(2)$ design setting considered in the previous paragraph, we trim each of the $p$ covariates at its $5\%$ and $95\%$ quantiles. 
From Figure~\ref{figure:experiment.3}, observe that with covariate trimming, even under heavy‑tailed covariates, Neumann‑corrected estimators reduce bias more effectively, without increasing variance much.

\begin{figure}[t]
    \centering
    \includegraphics[width=0.42\linewidth]{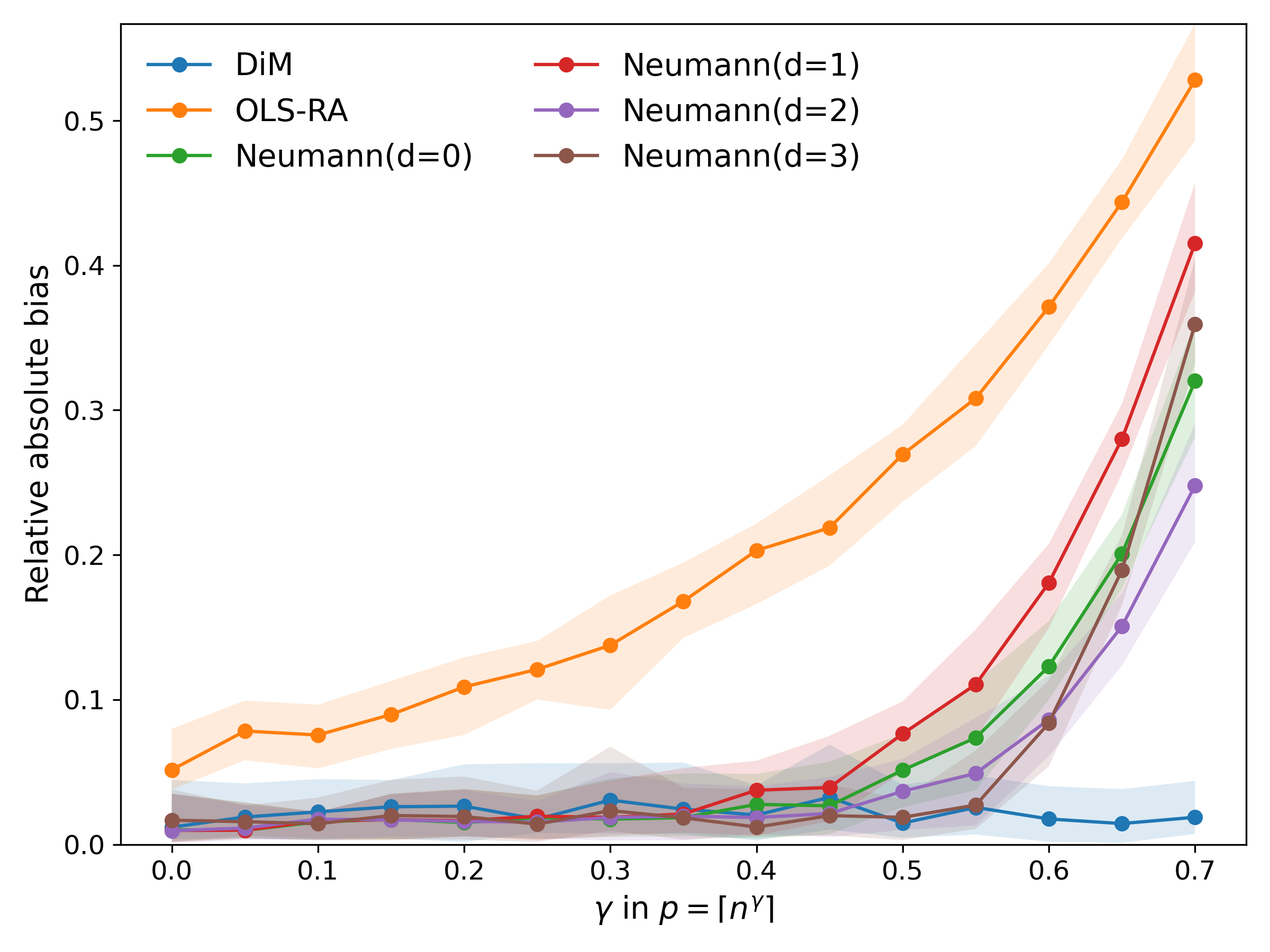}\qquad
    \includegraphics[width=0.42\linewidth]{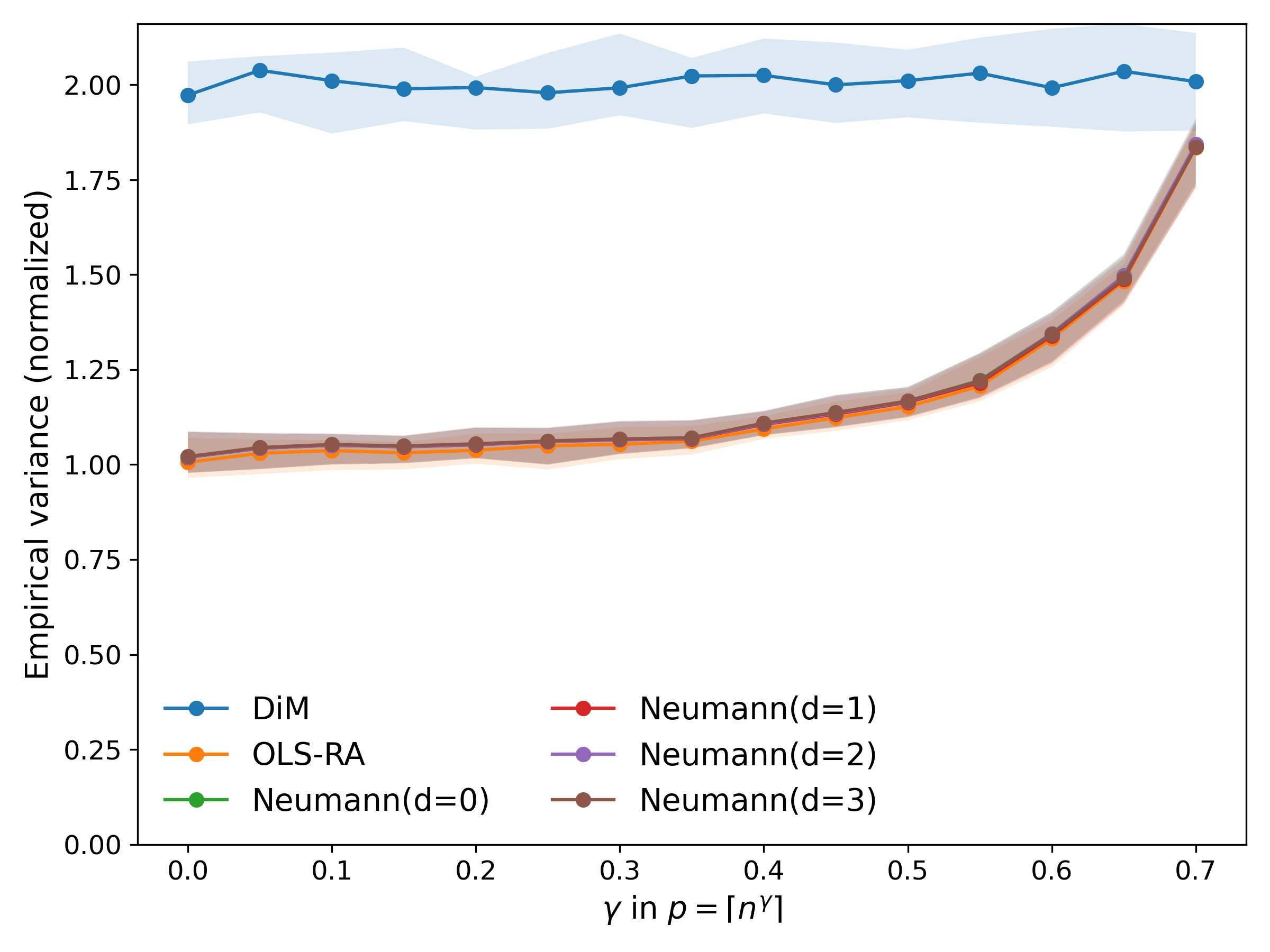}
    \caption{ \small
        Comparison of $\hat\tau_{\mathrm{OLS}}^{[d]}$ for $d \in \{0,1,2,3\}$ against $\hat\tau_{\mathrm{OLS}}$ and $\htaudim$ under $t(2)$ random $X$ with each covariate trimmed at its $5\%$ and $95\%$ quantiles, and the worst‑case residual model~\ref{item:worst} at $n = 500$, $n_1 = 150$, $N = 2000$.  
        The shaded bands indicate $10\%$--$90\%$ regions across the $R=25$ outer repetitions.
        \textbf{(Left):} Normalized absolute bias. 
        \textbf{(Right):} Normalized empirical variance.
        With covariate trimming, even under heavy‑tailed covariates, Neumann‑corrected estimators reduce bias more effectively, without increasing variance much.
        } 
    \label{figure:experiment.3}
\end{figure}
\section{Conclusion}\label{sec:conclusion}

We studied regression adjustment for average treatment effect estimation under complete randomization from a finite‑population, design‑based perspective. 
Identifying the random inverse in the arm‑wise OLS remainder as the bottleneck, we introduced a Neumann‑series correction that replaces the inverse with a truncated series and yields simple, arm‑wise correction terms computable from standard OLS fits. 
Under non‑asymptotic envelopes for the centered Gram matrix and residualized means, a Poincar\'e (spectral‑gap) inequality on the Johnson graph (via one‑swap couplings) shows the aggregate estimation error of any fixed number of correction terms is $o_P(n^{-1/2})$. 
Consequently, the degree‑$d$ Neumann‑corrected estimator is asymptotically normal whenever
\[
    p^{\,d+3}(\log p)^{\,d+1}=o(n^{\,d+2}),
\]
strictly expanding the admissible $p$‑growth beyond existing design‑based analyses. 
Simulations corroborate the theory and illustrate a bias–variance trade-off: even degrees mainly reduce bias, while odd degrees tend to temper variance, with the gains most visible under adversarial residual alignment with leverage.

Nevertheless, the analysis in this paper has several limitations, each suggesting a concrete direction for future work. 
First of all, we currently require $p<n$, diffuse leverage, and mild residual regularity; relaxing these (e.g., via leverage trimming or robustification of the envelopes) and tightening constants/log factors are natural targets would sharpen the guarantees. 
Secondly, in this work, we focus on point estimation for a fixed degree $d$; developing data‑driven selection of $d$ and finite‑sample variance estimators enabling valid confidence intervals are important next steps. 
Moreover, the present analysis covers complete randomization; extending the Neumann corrections to stratified, clustered, rerandomized, or covariate‑adaptive assignments would broaden applicability across common experimental settings. 
Further, our theory is focused on OLS in the classical ($n>p$) regime; extending Neumann corrections to stabilized inverses (ridge or pseudoinverses) could bridge to the over‑parameterized OLS‑interpolator regime, potentially unifying design‑based corrections with benign‑overfitting phenomena; connections to learned features or cross‑fitting would also be an exciting direction. 
Finally, developing scalable algorithms to compute higher‑degree Neumann-correction weights would enhance practical utility.

\bibliographystyle{plainnat}
\bibliography{bibliography}
\newpage

\appendix

\begin{center}
\LARGE\bfseries
    \textsc{
    Supplementary Material
    }
\end{center}
\vspace{10pt}

\begin{itemize}[leftmargin=1.25em]
    \item 
    \textbf{Appendix~\ref{sec:assumption_rate}} (Non‑asymptotic envelopes) states concentration tools for sampling without replacement and derive high‑probability envelopes that instantiate Assumption~\ref{assumption:rates}.
    \item 
    \textbf{Appendix~\ref{sec:additional_examples}} (Additional examples) illustrates the Neumann weight (Definition~\ref{defn:neumann_weight}) and the combinatorial notions (words, partitions, design monomials) used in Section~\ref{sec:design_expectation}.
    \item 
    \textbf{Appendix~\ref{sec:computation_expectation}} (Computation of design expectations) describes algorithms to compute the Neumann weights and the correction terms efficiently.
    \item 
    \textbf{Appendix~\ref{sec:proof_main_theorem}} (Proof of the main theorem) provides a proof of Theorem~\ref{thm:main-OLS}, including the swap‑sensitivity and Poincaré–Johnson arguments.
    \item 
    \textbf{Appendix~\ref{sec:additional_experiments}} (Additional experiments) reports complementary empirical results and implementation details that augment Section~\ref{sec:experiments}.
\end{itemize}

\section{Additional discussion on Assumption \ref{assumption:rates} in Section \ref{sec:assumptions}}\label{sec:assumption_rate}

This appendix section collects the minimal concentration tools for sampling without replacement that we use, and derives high‑probability envelopes that instantiate Assumption~\ref{assumption:rates}. 
Section~\ref{sec:primer} records two generic results (scalar and matrix Bernstein inequalities without replacement). 
Section~\ref{sec:envelopes} states three lemmas giving explicit envelopes for the arm‑wise empirical mean $\bar x_{\cS}$, the vector $u_{\cS}(x;r)$, and the operator deviation $\opnorm{ \Sigma_{\cS}-I_p }$, with their proofs recorded in Section~\ref{sec:proof_lemmas}.  
Throughout, for vectors we write $\|\cdot\|$ for the Euclidean norm, and for matrices $\opnorm{ \cdot }$ for the spectral norm.

\subsection{A primer on concentration under sampling without replacement}
\label{sec:primer}

Throughout, let $\cS\subset[n]$ be a uniformly random subset of fixed size $m$. 
We use the following Bernstein‑Serfling-type inequality with finite‑population correction.

\begin{proposition}[{\citet[Theorem 3.5]{bardenet2015concentration}}]
\label{prop:bardenet-maillard}
    Let $\{a_i\}_{i=1}^n\subset\RR$ with mean $\bar a = n^{-1}\sum_i a_i$, range $A\leq  a_i\leq  B$, and population variance $\sigma_a^2 = n^{-1}\sum_i (a_i-\bar a)^2$. If $\cS$ is a size‑$m$ SRSWOR sample, then for all $t>0$ and all $\eta \in (0,1)$,
    \begin{equation}\label{eq:bernstein-srswor}
        \Pr \left( \frac{1}{m}\sum_{i\in\cS} a_i - \bar a \geq  t \right)
            \leq  \exp \left(
                -\frac{m\,t^2 /  2}{ \gamma_{\eta}^2 + \frac{2}{3}(B-A)\,t}\right) +  \eta,
    \end{equation}
    where 
    \[
        \gamma_{\eta}^2 = \frac{n-m+1}{n} \sigma_a^2 +\frac{m-1}{n} \sigma_a (B-A) \sqrt{ \frac{ 2 \log(1/\eta)}{m} }.
    \]
\end{proposition}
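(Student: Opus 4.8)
This is \citet[Theorem 3.5]{bardenet2015concentration}, so I would cite it directly; nonetheless, here is how I would reconstruct the Bernstein--Serfling bound, which also clarifies the shape of the variance proxy $\gamma_\eta^2$ and the additive $\eta$. The plan is to combine a martingale (Freedman-type) tail bound with a high-probability control of the empirical variance.

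\emph{Step 1 (martingale reduction).} Realize the SRSWOR sample as a sequential draw $a_{(1)}, \dots, a_{(m)}$ without replacement and set $\cF_k = \sigma(a_{(1)}, \dots, a_{(k)})$. The centered partial sums $M_k = \sum_{j=1}^k (a_{(j)} - \bbE[a_{(j)}\mid \cF_{j-1}])$ form a martingale (Serfling's device), and the standard Serfling manipulation rewrites $\frac1m\sum_{i\in\cS}a_i - \bar a$ as $\frac1m M_m$ plus a telescoping remainder assembled from the same increments. Each increment obeys $|a_{(j)} - \bbE[a_{(j)}\mid\cF_{j-1}]| \le B-A$, since $a_{(j)}$ and its conditional mean both lie in $[A,B]$; this bound is what produces the $\tfrac23(B-A)t$ term in the denominator of the exponent.

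\emph{Step 2 (Freedman inequality plus variance control).} Apply a Freedman-type Bernstein inequality to $M_m$: its tail is governed by the predictable quadratic variation $\langle M\rangle_m = \sum_{j=1}^m \Var(a_{(j)}\mid\cF_{j-1})$. This quantity is random --- $\Var(a_{(j)}\mid\cF_{j-1})$ is the variance of a uniform draw from the $n-j+1$ not-yet-sampled elements --- and the classical Serfling variance computation gives $\bbE\langle M\rangle_m = m\cdot\frac{n-m+1}{n}\sigma_a^2$. To replace the random $\langle M\rangle_m$ by the deterministic envelope $m\gamma_\eta^2$, I would show that, with probability at least $1-\eta$, $\langle M\rangle_m \le m(\frac{n-m+1}{n}\sigma_a^2 + \frac{m-1}{n}\sigma_a(B-A)\sqrt{2\log(1/\eta)/m})$; this is a one-sided concentration of the empirical second moment, which one can handle by writing the sample variance as the U-statistic $\frac{1}{2m(m-1)}\sum_{i\ne j\in\cS}(a_i-a_j)^2$ and invoking Hoeffding's reduction to i.i.d.\ sampling \citep{hoeffding1963probability} together with a variance-aware (Bennett-type) bound. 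Intersecting the Freedman event with this variance event and charging $\eta$ to its complement via a union bound then yields \eqref{eq:bernstein-srswor}.

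\emph{Main obstacle.} The delicate point is Step 2's empirical-variance bound with the \emph{Bernstein} scaling $\sigma_a(B-A)$ rather than the crude Hoeffding scaling $(B-A)^2$: the sample variance is a quadratic functional, not a sum, so one must exploit either Hoeffding's reduction plus a variance-aware bound, or a self-bounding property, and then track constants carefully to land exactly on $\frac{m-1}{n}\sigma_a(B-A)\sqrt{2\log(1/\eta)/m}$ and the finite-population factor $\frac{n-m+1}{n}$. A structurally identical inequality with slightly looser constants is routine; matching the displayed form is the only real bookkeeping.
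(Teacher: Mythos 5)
The paper does not prove this proposition at all: it is imported verbatim as \citet[Theorem 3.5]{bardenet2015concentration}, so your decision to cite it directly is exactly what the paper does, and nothing more is required of you here. Your reconstruction sketch (Serfling's martingale device, a Freedman/Bernstein-type bound driven by the predictable quadratic variation, and a high-probability replacement of that random variance by the deterministic proxy $\gamma_\eta^2$, with the failure probability charged as the additive $\eta$) is a faithful outline of the Bardenet--Maillard argument, and you correctly flag the only delicate point, namely obtaining the $\sigma_a(B-A)$ (rather than $(B-A)^2$) scaling in the variance-control step with the exact constants displayed.
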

The equation display in \eqref{eq:bernstein-srswor} is a one‑sided bound; a two‑sided deviation follows by applying it to both $\{a_i\}$ and $\{-a_i\}$ and taking a union bound.

Similarly, we can control the operator norm of sums of centered Hermitian matrices sampled without replacement; this type of result is commonly referred to as a matrix Bernstein inequality. 
\begin{proposition}[\cite{gross2010note}, Theorem 1]\label{prop:matrix_Bernstein}
    Let $\{M_i\}_{i=1}^n$ be $p\times p$ Hermitian matrices with $\frac{1}{n}\sum_i M_i = 0$. 
     Define
    \[
        v\coloneqq \opnorm{ \, \frac{1}{n}\sum_{i=1}^n M_i^2 \, },\qquad
        c\coloneqq\max_{1\leq i\leq n} \opnorm{ M_i }.
    \]
    If $\cS$ is a size‑$m$ SRSWOR sample, then for all $t>0$,
    \begin{equation}\label{eq:matrix-bernstein-srswor}
        \Pr \left(\left\|\frac{1}{m}\sum_{i\in\cS} M_i\right\|>t\right)
        \ \le\ 2p\;\exp \left(-m\,\min\left\{\frac{t^2}{4v},\ \frac{t}{2c}\right\}\right).
    \end{equation}
\end{proposition}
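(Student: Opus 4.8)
The plan is to prove \eqref{eq:matrix-bernstein-srswor} by the matrix Laplace‑transform (Ahlswede--Winter/Tropp) method combined with Hoeffding's reduction from sampling without replacement to sampling with replacement, which is the route of \citet{gross2010note}. Write $W\coloneqq\sum_{i\in\cS}M_i$, so the event of interest is $\{\|W\|>mt\}$. Since $\{-M_i\}_{i=1}^n$ satisfies the same hypotheses, a union bound lets us focus on $\Pr\bigl(\lambda_{\max}(W)\ge mt\bigr)$ and multiply by $2$ at the end; and since $\mathrm{tr}\,e^{\theta W}\ge e^{\theta\lambda_{\max}(W)}$, the matrix Markov inequality gives, for every $\theta>0$,
\[
    \Pr\bigl(\lambda_{\max}(W)\ge mt\bigr)\ \le\ e^{-\theta m t}\,\mathbb{E}\,\mathrm{tr}\,e^{\theta W}
    \ =\ e^{-\theta m t}\,\mathrm{tr}\,\mathbb{E}\,e^{\theta W},
\]
so everything reduces to controlling the scalar $\mathrm{tr}\,\mathbb{E}\,e^{\theta W}$.

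The key step is the passage to the with‑replacement model. The map $A\mapsto\mathrm{tr}\,e^{\theta A}$ is convex on the space of $p\times p$ Hermitian matrices, and $\cS$ is a uniformly random size‑$m$ subset of $[n]$. By Hoeffding's comparison inequality \citep{hoeffding1963probability} --- in the form that $\mathbb{E}\,g\bigl(\sum_{i\in\cS}M_i\bigr)\le\mathbb{E}\,g\bigl(\sum_{k=1}^m\widetilde M_k\bigr)$ for any convex $g$, where $\widetilde M_1,\dots,\widetilde M_m$ are i.i.d.\ uniform on $\{M_1,\dots,M_n\}$ --- applied with $g=\mathrm{tr}\exp(\theta\,\cdot)$, we obtain
\[
    \mathrm{tr}\,\mathbb{E}\,e^{\theta W}\ \le\ \mathrm{tr}\,\mathbb{E}\,\exp\!\Bigl(\theta\textstyle\sum_{k=1}^m\widetilde M_k\Bigr).
\]
For this i.i.d.\ sum, each $\widetilde M_k$ is centered ($\mathbb{E}\widetilde M_k=\frac1n\sum_iM_i=0$), bounded ($\|\widetilde M_k\|\le c$), with $\bigl\|\mathbb{E}\widetilde M_k^2\bigr\|=\bigl\|\frac1n\sum_iM_i^2\bigr\|=v$, so the classical matrix Bernstein moment‑generating‑function bound yields $\mathrm{tr}\,\mathbb{E}\exp\bigl(\theta\sum_k\widetilde M_k\bigr)\le p\,\exp\!\bigl(m v c^{-2}(e^{\theta c}-1-\theta c)\bigr)$.

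Combining the two displays and optimizing over $\theta>0$ finishes the argument: the exponent becomes $-m\bigl(\theta t - v c^{-2}(e^{\theta c}-1-\theta c)\bigr)$, and bounding $\sup_{\theta>0}$ of the bracket below by $\min\{t^2/(4v),\,t/(2c)\}$ --- via the usual split into the sub‑Gaussian regime $t\le v/c$ (take $\theta\asymp t/v$) and the sub‑exponential regime $t>v/c$ (take $\theta c\asymp 1$) --- together with the dimensional factor $p$ and the factor $2$ from the $\pm W$ union bound, gives exactly \eqref{eq:matrix-bernstein-srswor}. I expect the main obstacle to be the second paragraph: one must invoke, and ideally re‑justify, Hoeffding's convex‑order domination of the without‑replacement sum by the with‑replacement sum --- this is the only place the finite‑population structure enters, and it is precisely the content of \citet{gross2010note}; once that reduction is granted the rest is the textbook matrix Bernstein computation. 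A purely martingale‑based alternative (matrix Azuma/Freedman along the Doob filtration of sequential sampling) is conceivable but requires more delicate bookkeeping of the predictable quadratic variation without replacement, so the comparison approach is preferable.
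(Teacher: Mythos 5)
Your proposal is correct and follows exactly the route the paper attributes to \citet{gross2010note}: reduce the trace moment generating function of the SRSWOR sum to that of the i.i.d.\ (with-replacement) sum via Hoeffding's convex-order/conditional-expectation argument, then run the standard Ahlswede--Winter/Tropp matrix Bernstein computation and optimize over $\theta$; the paper itself does not reprove the proposition but cites it and sketches this same comparison idea in the surrounding remark. The only point deserving care, which you already flag, is that Hoeffding's original inequality is scalar, and its extension to the matrix-valued convex function $A\mapsto\tr e^{\theta A}$ (via writing the without-replacement sum as a conditional expectation of the with-replacement sum and applying Jensen) is precisely the contribution of \citet{gross2010note}.
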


\citet{gross2010note} show that the trace of matrix moment generating function (mgf) of a sample without replacement is dominated by the trace of the matrix mgf of a sample with replacement, thereby proving the matrix Bernstein tails for i.i.d. samples apply to SRSWOR; this comparison argument, which transfers i.i.d. concentration to SRSWOR, is ultimately due to \citet{hoeffding1963probability}.

For later use, we record an immediate inversion of Propositions~\ref{prop:bardenet-maillard} and \ref{prop:matrix_Bernstein}. 
This inversion is routine and standard; see e.g., \citep{vershynin2018high}.

\begin{corollary}[Convenient $\delta$‑level forms]\label{cor:delta-level}
    \leavevmode
    \begin{enumerate}[label=(\alph*),leftmargin=2em]
        \item\label{item:scalar-invert}
        Let $\{a_i\}$ be as in Proposition~\ref{prop:bardenet-maillard}, and fix $\delta\in(0,1)$. Set $\sfL(\delta) \coloneqq \log(4/\delta)$ and
        \[
            t_{\rm scal}(\delta)\ \coloneqq\ \sqrt{\frac{2\,\Gamma^2(\delta)\,\sfL(\delta)}{m}}\;+\;\frac{2(B-A)}{3m}\,\sfL(\delta),
            \quad\text{where}\quad
            \Gamma^2(\delta)\ \coloneqq\ \frac{n-m+1}{n}\,\sigma_a^2 \;+\; \frac{m-1}{n}\,\sigma_a(B-A)\,\sqrt{\frac{2\sfL(\delta)}{m}}.
        \]
        Then $\Pr \Big(\big|\frac{1}{m}\sum_{i\in\cS}a_i-\bar a\big|>t_{\rm scal}(\delta)\Big)\le \delta$.
        
        \item\label{item:matrix-invert}
        Let $\{M_i\}$ be as in Proposition~\ref{prop:matrix_Bernstein}, fix $\delta\in(0,1)$, set $\sfL_p(\delta) \coloneqq\log \big(2p/\delta\big)$ and
        \[
            t_{\rm mat}(\delta)\ \coloneqq\ \max \left\{\,2\sqrt{\frac{v\,\sfL_p(\delta)}{m}}\,,\ \frac{2c\,\sfL_p(\delta)}{m}\right\}.
        \]
        Then $\Pr \Big(\opnorm{\, \frac{1}{m}\sum_{i\in\cS}M_i \,} >t_{\rm mat}(\delta)\Big)\le \delta$.
    \end{enumerate}
\end{corollary}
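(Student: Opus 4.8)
The plan is to invert each tail bound directly, choosing the free parameters so that the right-hand side of the displayed inequality in Proposition \ref{prop:bardenet-maillard} (resp. Proposition \ref{prop:matrix_Bernstein}) equals (or is at most) $\delta$. For part \ref{item:scalar-invert}, I would start from \eqref{eq:bernstein-srswor}, split the budget by setting $\eta = \delta/2$ (so the additive slack absorbs half of $\delta$), and then force the exponential term to be at most $\delta/4$ on each side, which after the two-sided union bound leaves total mass $2\cdot(\delta/4)+\delta/2 = \delta$. With $\eta=\delta/2$, the quantity $\gamma_\eta^2$ becomes exactly $\Gamma^2(\delta)$ as defined (noting $\log(1/\eta)=\log(2/\delta)\le \log(4/\delta)=\sfL(\delta)$, so replacing $\log(2/\delta)$ by $\sfL(\delta)$ only inflates the variance proxy and is safe). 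Requiring $\exp\!\big(-\tfrac{m t^2/2}{\Gamma^2(\delta)+\tfrac23(B-A)t}\big)\le \delta/4$ is equivalent to $m t^2/2 \ge \sfL(\delta)\big(\Gamma^2(\delta)+\tfrac23(B-A)t\big)$, i.e. a quadratic inequality in $t$; the standard trick is to note it suffices to have $t \ge \sqrt{2\Gamma^2(\delta)\sfL(\delta)/m}$ \emph{and} $t \ge \tfrac{2(B-A)}{3m}\sfL(\delta)$ simultaneously, and that the sum $t_{\rm scal}(\delta)$ of these two thresholds satisfies both. (Concretely: at $t=t_{\rm scal}(\delta)$ the first threshold bounds the first summand term $\Gamma^2(\delta)\sfL(\delta)$ and the second threshold bounds the $\tfrac23(B-A)t\,\sfL(\delta)$ term, so $m t^2/2 \ge mt\cdot\tfrac12 t_{\rm scal}(\delta) \ge \sfL(\delta)(\Gamma^2(\delta)+\tfrac23(B-A)t)$ after collecting terms.) Applying the same bound to $\{-a_i\}$ and adding gives the claim.

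For part \ref{item:matrix-invert}, the inversion is cleaner because \eqref{eq:matrix-bernstein-srswor} has no additive slack. Set the exponent so that $2p\exp\!\big(-m\min\{t^2/(4v),\,t/(2c)\}\big)\le \delta$, i.e. $m\min\{t^2/(4v),\,t/(2c)\} \ge \log(2p/\delta)=\sfL_p(\delta)$. This holds as soon as \emph{both} $t^2/(4v)\ge \sfL_p(\delta)/m$ and $t/(2c)\ge \sfL_p(\delta)/m$, i.e. $t\ge 2\sqrt{v\sfL_p(\delta)/m}$ and $t\ge 2c\sfL_p(\delta)/m$; taking $t=t_{\rm mat}(\delta)$ equal to the maximum of the two thresholds satisfies both simultaneously, so $\Pr(\opnorm{m^{-1}\sum_{i\in\cS}M_i}>t_{\rm mat}(\delta))\le\delta$.

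I do not expect a genuine obstacle here — this is the routine "invert the exponential" manipulation. The only mild subtlety is bookkeeping in part \ref{item:scalar-invert}: keeping track of the split between the exponential mass and the additive $\eta$-term, the factor-of-two loss from two-sidedness, and the monotone replacement $\log(2/\delta)\rightsquigarrow \sfL(\delta)=\log(4/\delta)$ inside $\Gamma^2(\delta)$ (which must be checked to be in the safe direction, namely that enlarging the variance proxy only weakens, never strengthens, the inequality one is asserting). Once these constant-chasing details are arranged, both parts follow by the "sum of thresholds dominates each threshold" argument for resolving the $\min$/quadratic, and the proof is complete.
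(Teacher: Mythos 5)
Your part \ref{item:matrix-invert} is exactly the paper's argument and is fine. Part \ref{item:scalar-invert}, however, has a bookkeeping error that prevents the probability budget from closing. You set $\eta=\delta/2$ and count the additive slack only once, arriving at $2\cdot(\delta/4)+\delta/2=\delta$. But Proposition~\ref{prop:bardenet-maillard} is a \emph{one-sided} bound with its own additive $+\eta$, and the two-sided claim requires applying it twice (to $\{a_i\}$ and to $\{-a_i\}$) and union-bounding; each application contributes its own $\eta$. With your choice the total is $2\cdot(\delta/4)+2\cdot(\delta/2)=3\delta/2$, which does not prove the stated $\le\delta$. The paper instead takes $\eta=\delta/4$, so that each one-sided tail is at most $\delta/4+\delta/4=\delta/2$ and the union bound gives exactly $\delta$; this choice also makes $\gamma_\eta^2$ equal to $\Gamma^2(\delta)$ \emph{identically} (since $\log(1/\eta)=\log(4/\delta)=\sfL(\delta)$), so your monotone-replacement step $\log(2/\delta)\rightsquigarrow\sfL(\delta)$ becomes unnecessary. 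The fix is trivial, but as written the accounting is wrong, and it is precisely the constant-chasing detail you flagged as the only subtlety.

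One further caution on your parenthetical verification of the quadratic step: with $t=t_{\rm scal}(\delta)=t_1+t_2$, $t_1=\sqrt{2\Gamma^2(\delta)\sfL(\delta)/m}$, $t_2=\tfrac{2(B-A)}{3m}\sfL(\delta)$, the chain $mt^2/2\ge mt\cdot\tfrac12 t_{\rm scal}(\delta)\ge \sfL(\delta)\bigl(\Gamma^2(\delta)+\tfrac23(B-A)t\bigr)$ does not go through as stated: the piece $\tfrac{m t\,t_2}{2}$ equals $\tfrac{(B-A)\sfL(\delta)\,t}{3}$, which is only half of the needed $\tfrac{2}{3}(B-A)\sfL(\delta)\,t$ (the standard inversion of an exponent with denominator $\Gamma^2+\tfrac23(B-A)t$ produces the additive threshold $\tfrac{4(B-A)}{3m}\sfL(\delta)$, not $\tfrac{2(B-A)}{3m}\sfL(\delta)$). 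The paper's own proof asserts the same inequality without detailed verification, so this soft spot is shared with the statement itself rather than introduced by you; still, if you keep your "sum of thresholds" justification you should either redo the algebra honestly (and adjust the constant) or bound the exponent differently, rather than rely on the displayed chain.
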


Note that both $t_{\rm scal}(\delta)$ and $t_{\rm mat}(\delta)$ are non-decreasing in $1/\delta$.

\begin{proof}
    \begin{itemize}
        \item 
        \emph{Part \ref{item:scalar-invert}:} 
        Apply \eqref{eq:bernstein-srswor} to $\{ a_i \}_{i=1}^n$ with the choice $t=t_{\rm scal}(\delta)$ and $\eta = \delta/4$. 
        Observe that 
        \begin{align*}
            \Pr \left( \frac{1}{m}\sum_{i\in\cS} a_i - \bar a \geq  t \right)
                &\leq  \exp \left(
                    -\frac{m\,t^2 /  2}{ \gamma_{\eta}^2 + \frac{2}{3}(B-A)\,t}\right) +  \eta\\
                &\leq \exp \left(
                    -\frac{m\,t^2 /  2}{ \Gamma^2(\delta) + \frac{2}{3}(B-A)\,t}\right) +  \frac{\delta}{4}\\
                &\leq \frac{\delta}{2},
                    &
                    \because \frac{m\,t_{\rm scal}(\delta)^2 /  2}{ \Gamma^2(\delta) + \frac{2}{3}(B-A)\,t_{\rm scal}(\delta)} \geq \sfL(\delta).
        \end{align*}
        Repeating the same procedure to $\{ - a_i \}_{i=1}^n$ contributes another $\delta/2$. 
        Union bound gives the claim. 

        \item 
        \emph{Part \ref{item:matrix-invert}:} 
        Choose $t=t_{\rm mat}(\delta)$ so that $\min\{t^2/(4v),\,t/(2c)\}\ge \sfL_p(\delta)/m$ in \eqref{eq:matrix-bernstein-srswor}.
    \end{itemize}
\end{proof}

\subsection{High‑probability envelopes}\label{sec:envelopes}

Recall $\cS\subset[n]$ is a uniformly random subset of size $m$. 
For fixed vectors $\{x_i\}_{i=1}^n\subset\RR^p$ and $r=(r_1,\dots,r_n)^\top\in\RR^n$, write
\[
    \bar x_{\cS}\coloneqq \frac{1}{m}\sum_{i\in\cS}x_i,\qquad
    \Sigma_{\cS}\coloneqq \frac{1}{m}\sum_{i\in\cS}(x_i-\bar x_{\cS})(x_i-\bar x_{\cS})^\top,\qquad
    u_{\cS}(x;r)\coloneqq \frac{1}{m}\sum_{i\in\cS}(x_i-\bar x_{\cS})\,r_i.
\]
Also, define
\[
    B_x\coloneqq \max_{1\le i\le n}\|x_i\|,\qquad
    B_r\coloneqq \max_{1\le i\le n}|r_i|,\qquad
    \bar r\coloneqq \frac{1}{n}\sum_{i=1}^n r_i.
\]
Lastly, introduce shorthand notation for the population averages and variances (all deterministic):
\[
    \bar q\coloneqq \frac{1}{n}\sum_{i=1}^n\|x_i\|^2,\quad
    \sigma_q^2\coloneqq \frac{1}{n}\sum_{i=1}^n(\|x_i\|^2-\bar q)^2,\quad
    \bar r_2\coloneqq \frac{1}{n}\sum_{i=1}^n r_i^2,\quad
    \sigma_{r^2}^2\coloneqq \frac{1}{n}\sum_{i=1}^n(r_i^2-\bar r_2)^2.
\]
Note that $\opnorm{x_i x_i^\top}=\|x_i\|^2$ and $\opnorm{n^{-1}\sum_j x_j x_j^\top}\le \bar q$, which we will use later in Lemma \ref{lem:sigma-envelope}.

\begin{remark}
    In the main text $\{x_1, \dots, x_n\}$ will correspond to the covariate vectors and $r$ will be instantiated by population OLS residuals $\res^{(\arm)}$, for which $\bar r=0$ by OLS orthogonality (cf. Section~\ref{sec:reg_adjustment}).
\end{remark}

\paragraph{Three lemmas on high-probability bounds.} 
Here we present high-probability upper bounds for $\| \bar{x}_{\cS} \|$, $\| u_{\cS}(x;r) \|$, and $\opnorm{ \Sigma_{\cS} - I_p }$ in three lemmas. 
Their proofs are based on Propositions \ref{prop:bardenet-maillard}--\ref{prop:matrix_Bernstein} and presented in Appendix \ref{sec:proof_lemmas}.

\begin{lemma}[Mean vector]\label{lem:xbar-envelope}
    Fix $\delta\in(0,1)$ and set $\sfL(\delta) \coloneqq\log(2/\delta)$. Let
    \[
        t_q(\delta)\coloneqq \sqrt{\frac{2\,\Gamma_q^2(\delta)\,\sfL(\delta)}{m}}\ +\ \frac{2B_x^2}{3m}\,\sfL(\delta),
        \qquad\text{where}\qquad
        \Gamma_q^2(\delta)\coloneqq \frac{n-m+1}{n}\,\sigma_q^2\ +\ \frac{m-1}{n}\,\sigma_q\,B_x^2\,\sqrt{\frac{2\sfL(\delta)}{m}}.
    \]
    Then, with probability at least $1-\delta$,
    \begin{equation}\label{eq:xbar-explicit}
        \|\bar x_{\cS}\|\ \le\ \sqrt{\frac{1}{m}\Big(\bar q + t_q(\delta)\Big)}.
    \end{equation}
\end{lemma}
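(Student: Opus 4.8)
The envelope \eqref{eq:xbar-explicit} scales like $m^{-1/2}$, so the entire content is to capture the $1/m$ decay of $\|\bar x_{\cS}\|^2$; this decay comes solely from the centering $\sum_{i=1}^n x_i = 0$ (columns of $X$ are centered, Assumption~\ref{assump:covariates}), which makes $\bbE[\bar x_{\cS}] = 0$. Two natural routes discard exactly this gain and must be avoided. The trace identity $\tr(\Sigma_{\cS}) = \tfrac1m\sum_{i\in\cS}\|x_i\|^2 - \|\bar x_{\cS}\|^2 \ge 0$ yields only $\|\bar x_{\cS}\|^2 \le \tfrac1m\sum_{i\in\cS}\|x_i\|^2 \le \bar q + t_q(\delta)$, a factor $m$ too large; and routing through matrix Bernstein (Proposition~\ref{prop:matrix_Bernstein}) via $\bar x_{\cS}\bar x_{\cS}^\top = U_{\cS} - \Sigma_{\cS}$ only reaches $\|\bar x_{\cS}\|^2 = O(\sqrt{p/m})$. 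The plan is therefore to treat $\|\bar x_{\cS}\|^2$ as a scalar statistic and split it into a diagonal and an off-diagonal piece, keeping the cancellation that both shortcuts throw away.

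Concretely, I would write
\[
    m^2\|\bar x_{\cS}\|^2 = \Bigl\|\sum_{i\in\cS}x_i\Bigr\|^2 = \sum_{i\in\cS}\|x_i\|^2 + O_{\cS},
    \qquad O_{\cS} \coloneqq \sum_{\substack{i,j\in\cS \\ i\ne j}} x_i^\top x_j .
\]
The diagonal part is a size-$m$ SRSWOR sample sum of $a_i \coloneqq \|x_i\|^2$, which has range $[0, B_x^2]$ and population variance $\sigma_q^2$. Applying the one-sided Bernstein--Serfling bound (Proposition~\ref{prop:bardenet-maillard} with $\eta = \delta/2$, equivalently the one-sided form of Corollary~\ref{cor:delta-level}\ref{item:scalar-invert} with $\sfL(\delta) = \log(2/\delta)$) gives, with probability at least $1-\delta$, the inequality $\tfrac1m\sum_{i\in\cS}\|x_i\|^2 \le \bar q + t_q(\delta)$; this is exactly where the quantities $t_q(\delta)$ and $\Gamma_q^2(\delta)$ of the statement originate. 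If in addition $O_{\cS} \le 0$, then $m^2\|\bar x_{\cS}\|^2 \le \sum_{i\in\cS}\|x_i\|^2 \le m(\bar q + t_q(\delta))$, and taking square roots delivers \eqref{eq:xbar-explicit}.

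It remains to show $O_{\cS} \le 0$ with high probability, and this is the main obstacle. Using $\sum_{i\ne j} x_i^\top x_j = \|\sum_i x_i\|^2 - \sum_i\|x_i\|^2 = -n\bar q$ together with the pairwise inclusion probability $\Pr(\{i,j\}\subset\cS) = \tfrac{m(m-1)}{n(n-1)}$, the mean is strictly negative,
\[
    \bbE[O_{\cS}] = \frac{m(m-1)}{n(n-1)}\sum_{i\ne j} x_i^\top x_j = -\frac{m(m-1)}{n-1}\,\bar q ,
\]
so $O_{\cS}$ is negative on average and the task reduces to dominating its positive fluctuation by $|\bbE[O_{\cS}]| \asymp \tfrac{m^2}{n}\bar q$. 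The difficulty is that $O_{\cS}$ is a degree-two U-statistic in the sampling indicators, outside the order-one reach of Propositions~\ref{prop:bardenet-maillard}--\ref{prop:matrix_Bernstein}. I would control it with an explicit second-moment (fourth-moment) computation, or equivalently a single-swap (Efron--Stein / Johnson-graph Poincar\'e) variance bound of the kind used elsewhere in the paper---a swap $\cS\mapsto(\cS\setminus\{j\})\cup\{k\}$ moves $\bar x_{\cS}$ by $(x_k-x_j)/m$---yielding $\Var(O_{\cS}) = O(p\,m^2)$. Since $\Var(O_{\cS})^{1/2} = O(\sqrt{p}\,m)$ is of smaller order than $|\bbE[O_{\cS}]| \asymp \tfrac{m^2}{n}\bar q$ once $p$ grows (the regime of interest, where $\bar q \asymp p$), Chebyshev renders the event $\{O_{\cS}>0\}$ negligible and closes the argument. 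The delicate point, and the only step that is not a direct application of the recorded Bernstein inequalities, is this quadratic-form control of $O_{\cS}$; I would also note the mild $p$-growth condition it imposes, consistent with the canonical rate $\gamma_{n,p}(\delta)\asymp\sqrt{p/n}$ that \eqref{eq:xbar-explicit} is meant to instantiate.
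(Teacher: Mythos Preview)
Your diagnosis of the Cauchy--Schwarz/trace-identity route is correct, and this is precisely the route the paper takes. The paper's proof reads
\[
\Big\|\tfrac{1}{m}\sum_{i\in\cS}x_i\Big\|^2
\ \le\ \frac{1}{m^2}\cdot m \sum_{i\in\cS}\|x_i\|^2
\ =\ \frac{1}{m}\,\Big(\tfrac{1}{m}\sum_{i\in\cS}\|x_i\|^2\Big),
\]
then applies the one-sided Bernstein--Serfling bound to $a_i=\|x_i\|^2$. The final equality is an arithmetic slip: the middle expression equals $\tfrac{1}{m}\sum_{i\in\cS}\|x_i\|^2$, not $\tfrac{1}{m^2}\sum_{i\in\cS}\|x_i\|^2$. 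So the paper's argument, read literally, establishes only $\|\bar x_{\cS}\|^2\le \bar q+t_q(\delta)$, exactly the factor-$m$ gap you identified. The lemma as stated is in fact false without the centering $\sum_i x_i=0$ (take all $x_i$ equal to a fixed nonzero vector), and even with centering small instances violate \eqref{eq:xbar-explicit}: for $n=4$, $m=2$, $p=1$, $x=(1,1,-1,-1)^\top$ one has $\sigma_q=0$, $t_q(\delta)=\tfrac{1}{3}\log(2/\delta)$, and the bound fails on $\cS\in\{\{1,2\},\{3,4\}\}$ (probability $1/3$) whenever $\delta>2e^{-3}\approx 0.10$.

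Your off-diagonal decomposition is the correct mechanism for recovering the intended rate $\gamma_{n,p}\asymp\sqrt{p/m}$ under $\sum_i x_i=0$: the strict negativity of $\bbE[O_{\cS}]=-\tfrac{m(m-1)}{n-1}\bar q$ is exactly where the missing $1/m$ lives. The residual gap in your plan is the target ``$O_{\cS}\le 0$ with probability $\ge 1-\delta$ for \emph{every} $\delta$''; that event-level statement is too strong and is what breaks in the small-$p$ example above (your own Chebyshev comparison $\sqrt{p}\,m\ll m^2\bar q/n$ already flags that $p\to\infty$ is needed). A cleaner route that avoids this is to bound $\|\bar x_{\cS}\|^2$ directly---either by a vector Bernstein for $\bar x_{\cS}$ using $\bbE[\bar x_{\cS}]=0$, or by the Poincar\'e/Efron--Stein bound you mention applied to $F(\cS)=\|\bar x_{\cS}\|^2$---and accept that the resulting envelope will have a different explicit form than the displayed $t_q(\delta)$. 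In short: your instinct that the short Cauchy--Schwarz proof cannot deliver the stated bound is right; your fix points in the right direction but should aim to \emph{replace} the exact statement of the lemma rather than to reproduce it.
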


\begin{lemma}[Centered residualized mean]\label{lem:u-envelope}
    Fix $\delta\in(0,1)$ and set $\sfL'(\delta)\coloneqq\log(6/\delta)$. Define
    \[
        t_{r^2}(\delta)\coloneqq \sqrt{\frac{2\,\Gamma_{r^2}^2(\delta)\,\sfL'(\delta)}{m}}\ +\ \frac{2B_r^2}{3m}\,\sfL'(\delta),
        \qquad\text{and}\qquad
        t_{r}(\delta)\coloneqq \sqrt{\frac{2\,\Gamma_{r}^2(\delta)\,\sfL'(\delta)}{m}}\ +\ \frac{4B_r}{3m}\,\sfL'(\delta).
    \]
    where
    \begin{align*}
        \Gamma_{r^2}^2(\delta) &\coloneqq \frac{n-m+1}{n}\,\sigma_{r^2}^2\ +\ \frac{m-1}{n}\cdot \sigma_{r^2}\cdot B_r^2\,\sqrt{\frac{2\sfL'(\delta)}{m}},\\
        \Gamma_{r}^2(\delta) &\coloneqq \frac{n-m+1}{n} \cdot \frac{1}{n}\sum_{i=1}^n(r_i-\bar r)^2\ +\ \frac{m-1}{n}\cdot \bigl( 2B_r \bigr) \cdot \sqrt{\frac{1}{n}\sum_{i=1}^n(r_i-\bar r)^2} \,\sqrt{\frac{2\sfL'(\delta)}{m}}.
    \end{align*}
    Then, with probability at least $1-\delta$,
    \begin{equation}\label{eq:u-explicit}
        \|u_{\cS}(x;r)\|\ \le\ \sqrt{\frac{\bar q+t_q(\delta)}{m}}\ \sqrt{\bar r_2+t_{r^2}(\delta)}\ +\ \|\bar x_{\cS}\|\;\Big(\,|\bar r|+t_{r}(\delta)\,\Big).
    \end{equation}
\end{lemma}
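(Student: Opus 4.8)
The plan is to turn the vector bound \eqref{eq:u-explicit} into scalar concentration statements via an algebraic splitting of $u_{\cS}(x;r)$, each controlled by the scalar Bernstein--Serfling inequality (Proposition~\ref{prop:bardenet-maillard}; see also Corollary~\ref{cor:delta-level}). Using $\sum_{i\in\cS}(x_i-\bar x_{\cS})=0$, write
\[
    u_{\cS}(x;r)=\frac1m\sum_{i\in\cS}x_i r_i-\bar x_{\cS}\,\bar r_{\cS},
    \qquad \bar r_{\cS}\coloneqq\frac1m\sum_{i\in\cS}r_i,
\]
so that $\|u_{\cS}(x;r)\|\le\bigl\|\tfrac1m\sum_{i\in\cS}x_ir_i\bigr\|+\|\bar x_{\cS}\|\,|\bar r_{\cS}|$. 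The two summands on the right of \eqref{eq:u-explicit} correspond, in order, to the main term $\bigl\|\tfrac1m\sum_{i\in\cS}x_ir_i\bigr\|$ and the cross term $\|\bar x_{\cS}\|\,|\bar r_{\cS}|$; I would bound them separately, starting with the easier cross term.

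For the cross term, bound $|\bar r_{\cS}|\le|\bar r|+|\bar r_{\cS}-\bar r|$ and note that $\bar r_{\cS}-\bar r$ is a centered SRSWOR sample mean of $\{r_i\}_{i=1}^n$: its entries lie in an interval of width at most $2B_r$, with population variance $\tfrac1n\sum_i(r_i-\bar r)^2$. Scalar Bernstein--Serfling then gives $|\bar r_{\cS}-\bar r|\le t_r(\delta)$ on an event of the required probability; here $\Gamma_r^2(\delta)$ is exactly the Bernstein--Serfling variance proxy for this data, and the additive term $\tfrac{4B_r}{3m}\,\sfL'(\delta)$ is $\tfrac23(B-A)$ with $B-A=2B_r$. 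Multiplying by $\|\bar x_{\cS}\|$ reproduces the second summand of \eqref{eq:u-explicit}.

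For the main term I would first expose the scalar summaries $\tfrac1m\sum_{i\in\cS}\|x_i\|^2$ and $\tfrac1m\sum_{i\in\cS}r_i^2$, bound them by $\bar q+t_q(\delta)$ (scalar Bernstein--Serfling on $\{\|x_i\|^2\}$, range $B_x^2$, variance $\sigma_q^2$) and by $\bar r_2+t_{r^2}(\delta)$ (scalar Bernstein--Serfling on $\{r_i^2\}$, range $B_r^2$, variance $\sigma_{r^2}^2$), and feed them into the Cauchy--Schwarz estimate $\bigl\|\tfrac1m\sum_{i\in\cS}x_ir_i\bigr\|\le\bigl(\tfrac1m\sum_{i\in\cS}\|x_i\|^2\bigr)^{1/2}\bigl(\tfrac1m\sum_{i\in\cS}r_i^2\bigr)^{1/2}$. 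The point that needs care --- and the step I expect to be the main obstacle --- is that bare Cauchy--Schwarz is off by a factor $\sqrt m$: by itself it yields an envelope of order $\sqrt{\bar q\,\bar r_2}\asymp\sqrt p$, whereas \eqref{eq:u-explicit} claims the sharper $\sqrt{(\bar q+t_q(\delta))/m}\,\sqrt{\bar r_2+t_{r^2}(\delta)}\asymp\sqrt{p/m}$, the first factor being exactly the $\|\bar x_{\cS}\|$-envelope of Lemma~\ref{lem:xbar-envelope}, cf.\ \eqref{eq:xbar-explicit}. Recovering the extra $1/\sqrt m$ exploits the standing centering $\bone_n^\top X=0$ together with the OLS orthogonality $X^\top r=0$ that holds in the intended instantiation $r=\res^{(\arm)}$ (cf.\ the Remark above and Section~\ref{sec:reg_adjustment}): these make $\tfrac1m\sum_{i\in\cS}x_ir_i$ a \emph{centered} SRSWOR average of the vectors $\{x_ir_i\}$, so that $\bbE\bigl\|\tfrac1m\sum_{i\in\cS}x_ir_i\bigr\|^2=\tfrac{n-m}{m(n-1)}\cdot\tfrac1n\sum_i r_i^2\|x_i\|^2$ carries the finite-population factor $\tfrac1m$; bounding $\tfrac1n\sum_i r_i^2\|x_i\|^2$ through the same two scalar summaries and promoting this second-moment control to a high-probability statement then yields the displayed first summand.

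Finally, I would intersect the three scalar Bernstein--Serfling events --- for $\{\|x_i\|^2\}$, $\{r_i^2\}$, and $\{r_i\}$ --- by a union bound, allocating the probability budget across them; this allocation is what fixes the logarithmic factors ($\sfL'(\delta)=\log(6/\delta)$ in $t_{r^2},t_r$, and $t_q(\delta)$ in the form inherited from Lemma~\ref{lem:xbar-envelope}). On the intersection both summands of \eqref{eq:u-explicit} hold simultaneously, giving the lemma with probability at least $1-\delta$. Apart from the sharp $1/\sqrt m$ scaling of the vector term, everything reduces to routine bookkeeping with Proposition~\ref{prop:bardenet-maillard}.
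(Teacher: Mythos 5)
Your decomposition $u_{\cS}=U_1-\bar x_{\cS}\,\bar r_{\cS}$, your handling of the cross term via $|\bar r_{\cS}|\le|\bar r|+|\bar r_{\cS}-\bar r|$ with scalar Bernstein--Serfling at range $2B_r$, and the final three-event union bound coincide with the paper's Steps 2--3. The divergence is in the main term, and there your diagnosis is the key point: the paper does nothing beyond the "bare" Cauchy--Schwarz you dismiss. Its Step 1 bounds $\|U_1\|\le \frac{1}{m}\sqrt{\sum_{i\in\cS}\|x_i\|^2}\,\sqrt{\sum_{i\in\cS}r_i^2}$ and then asserts this \emph{equals} $\sqrt{\frac{1}{m}\bigl(\frac{1}{m}\sum_{i\in\cS}\|x_i\|^2\bigr)}\,\sqrt{\frac{1}{m}\sum_{i\in\cS}r_i^2}$; that "equality" is off by exactly the factor $\sqrt{m}$ you flagged (the correct identity has no extra $1/m$ inside the first root). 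So the $1/\sqrt{m}$ in the first summand of \eqref{eq:u-explicit} is not actually established by the paper, and your observation that plain Cauchy--Schwarz only yields $\sqrt{\bar q+t_q(\delta)}\,\sqrt{\bar r_2+t_{r^2}(\delta)}$ is correct.

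That said, your own route does not close the gap, so as a proof the proposal is incomplete. First, it imports $\bone_n^\top X=0$ and $X^\top r=0$, which are not hypotheses of Lemma~\ref{lem:u-envelope}; they hold only in the intended instantiation $r=\res^{(\arm)}$. This matters: without them the stated envelope is false --- take $p=1$, $x_i=\pm1$ in equal numbers and $r=x$, so that $u_{\cS}=\frac{1}{m}\sum_{i\in\cS}x_i^2-\bar x_{\cS}^2\approx 1$ with high probability while the right-hand side of \eqref{eq:u-explicit} is $O(m^{-1/2})$. Second, the step "promoting this second-moment control to a high-probability statement" is precisely the missing argument: the identity $\bbE\|U_1\|^2=\frac{n-m}{m(n-1)}\cdot\frac{1}{n}\sum_i r_i^2\|x_i\|^2$ combined with Chebyshev gives only a $\delta^{-1/2}$ dependence, not the $\log(1/\delta)$-type envelopes $t_q,t_{r^2}$, and in no case does it reproduce the specific product form $\sqrt{(\bar q+t_q(\delta))/m}\,\sqrt{\bar r_2+t_{r^2}(\delta)}$; one would need a vector Bernstein-type inequality for SRSWOR applied to $\{x_ir_i\}$, and the lemma statement would then have to carry the extra orthogonality hypotheses and a correspondingly modified envelope. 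In short, you correctly located the crux, but neither your sketch nor the paper's own Step 1 proves \eqref{eq:u-explicit} as stated; a correct fix must either drop the $1/m$ inside the first square root or add the centering/orthogonality assumptions together with a genuine vector concentration argument.
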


\begin{lemma}[Operator deviation for $\Sigma_{\cS}$]\label{lem:sigma-envelope}
    Fix $\delta\in(0,1)$ and set $\sfL'_p(\delta) \coloneqq\log \big(4p/\delta\big)$. Write
    \[
    M_i\coloneqq x_i x_i^\top - \frac{1}{n}\sum_{j=1}^n x_j x_j^\top,
    \qquad
    v\coloneqq\ \opnorm{\, \frac{1}{n}\sum_{i=1}^n M_i^2 \,},
    \qquad
    c\coloneqq\max_{1\le i\le n} \opnorm{ M_i }\le B_x^2+\bar q.
    \]
    Then, with probability at least $1-\delta$,
    \begin{equation}\label{eq:Delta-explicit}
    \opnorm{ \Sigma_{\cS}-\tfrac{1}{n}\sum_{i=1}^n x_i x_i^\top }
    \ \le\ \max \left\{\,2\sqrt{\frac{v\,\sfL'_p(\delta)}{m}}\,,\ \frac{2c\,\sfL'_p(\delta)}{m}\right\}\ +\ \|\bar x_{\cS}\|^2.
    \end{equation}
    In particular, under the design normalization $\bone_n^\top X=0$ and $X^\top X=nI_p$ (so $n^{-1}\sum_i x_i x_i^\top = I_p$ and $\bar q=p$),
    \begin{equation}\label{eq:Delta-explicit-design}
    \opnorm{ \Sigma_{\cS}-I_p }
    \ \le\ \max \left\{\,2\sqrt{\frac{v\,\sfL'_p(\delta)}{m}}\,,\ \frac{2c\,\sfL'_p(\delta)}{m}\right\}\ +\ \|\bar x_{\cS}\|^2,
    \qquad
    c\le B_x^2+ p.
    \end{equation}
\end{lemma}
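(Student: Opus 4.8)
The plan is to peel a benign rank-one piece off $\Sigma_{\cS}$ and apply the SRSWOR matrix Bernstein inequality to what remains. First I would record the standard subsample covariance identity
\[
    \Sigma_{\cS} \;=\; \frac{1}{m}\sum_{i\in\cS} x_i x_i^\top \;-\; \bar x_{\cS}\bar x_{\cS}^\top ,
\]
obtained by expanding $(x_i-\bar x_{\cS})(x_i-\bar x_{\cS})^\top$ and using $\tfrac1m\sum_{i\in\cS}x_i=\bar x_{\cS}$. Subtracting $\tfrac1n\sum_j x_j x_j^\top$ and using the definition $M_i = x_i x_i^\top - \tfrac1n\sum_j x_j x_j^\top$ (so $\tfrac1n\sum_i M_i = 0$ automatically), this rewrites as $\Sigma_{\cS} - \tfrac1n\sum_j x_j x_j^\top = \tfrac1m\sum_{i\in\cS} M_i - \bar x_{\cS}\bar x_{\cS}^\top$. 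The operator-norm triangle inequality, together with $\opnorm{\bar x_{\cS}\bar x_{\cS}^\top}=\|\bar x_{\cS}\|_2^2$ (rank one), then yields
\[
    \opnorm{ \Sigma_{\cS} - \tfrac1n\sum_j x_j x_j^\top } \;\le\; \opnorm{ \tfrac1m\sum_{i\in\cS} M_i } \;+\; \|\bar x_{\cS}\|_2^2 ,
\]
which already exhibits the $\|\bar x_{\cS}\|^2$ summand on the right-hand side of the claim; only the first term remains to be controlled stochastically.

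For that term I would invoke matrix Bernstein for sampling without replacement: the $M_i$ are Hermitian with $\tfrac1n\sum_i M_i=0$, so Proposition~\ref{prop:matrix_Bernstein} — equivalently its $\delta$-level inversion in Corollary~\ref{cor:delta-level} (matrix part) — applies directly with variance proxy $v=\opnorm{\tfrac1n\sum_i M_i^2}$ and uniform bound $c=\max_i\opnorm{M_i}$. To obtain the stated form of $c$, note $\opnorm{x_i x_i^\top}=\|x_i\|^2\le B_x^2$ and $\opnorm{\tfrac1n\sum_j x_j x_j^\top}\le \tfrac1n\sum_j\|x_j\|^2=\bar q$, so the triangle inequality gives $c\le B_x^2+\bar q$. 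Choosing the deviation level $t$ so that $m\min\{t^2/(4v),\,t/(2c)\}\ge \sfL'_p(\delta)$ makes the matrix-Bernstein tail at most $\delta$; the confidence bookkeeping — which leaves budget for a later union bound against the $\bar x_{\cS}$ envelope when instantiating Assumption~\ref{assumption:rates} — produces the factor $\sfL'_p(\delta)=\log(4p/\delta)$, and since the bound is monotone in $\sfL'_p$ this is in any case a valid envelope. This gives, with probability at least $1-\delta$, $\opnorm{\tfrac1m\sum_{i\in\cS} M_i}\le \max\{2\sqrt{v\,\sfL'_p(\delta)/m},\,2c\,\sfL'_p(\delta)/m\}$, and combining with the triangle inequality above proves \eqref{eq:Delta-explicit}. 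For the specialization \eqref{eq:Delta-explicit-design} I would simply substitute Assumption~\ref{assump:covariates}: $\tfrac1n\sum_j x_j x_j^\top=\tfrac1n X^\top X=I_p$ and $\bar q=\tfrac1n\tr(X^\top X)=p$, whence $\Sigma_{\cS}-\tfrac1n\sum_j x_j x_j^\top=\Sigma_{\cS}-I_p$ and $c\le B_x^2+p$.

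I do not anticipate a genuine obstacle: the argument is a two-line deterministic decomposition plus a black-box application of Proposition~\ref{prop:matrix_Bernstein}, whose Hoeffding/Gross reduction from SRSWOR to the i.i.d. case is already packaged. The only points that warrant care are (i) keeping $v$ and $c$ defined through the \emph{full population} $\{M_i\}_{i=1}^n$ — deterministic quantities — exactly as Proposition~\ref{prop:matrix_Bernstein} requires, rather than through the random subsample, and (ii) the confidence-level and log-factor accounting described above.
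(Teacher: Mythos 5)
Your proposal is correct and follows essentially the same route as the paper's proof: the identity $\Sigma_{\cS}-\tfrac1n\sum_j x_jx_j^\top=\tfrac1m\sum_{i\in\cS}M_i-\bar x_{\cS}\bar x_{\cS}^\top$, the triangle inequality with $\opnorm{\bar x_{\cS}\bar x_{\cS}^\top}=\|\bar x_{\cS}\|^2$, and the SRSWOR matrix Bernstein bound (Corollary~\ref{cor:delta-level}) with $c\le B_x^2+\bar q$. The only cosmetic difference is the confidence bookkeeping: the paper splits $\delta/2$ between the matrix term and Lemma~\ref{lem:xbar-envelope} (which is where $\log(4p/\delta)$ comes from), whereas you apply the matrix bound alone and correctly note that $\sfL'_p(\delta)=\log(4p/\delta)$ remains a valid (if slightly loose) envelope by monotonicity.
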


\paragraph{Specialization under the design normalization.}
When $\bone_n^\top X=0$ and $X^\top X=nI_p$ we have $\bar q=p$ and $v, c$ above are computed from $\{x_i\}$ as in Lemma~\ref{lem:sigma-envelope}. 
Hence the explicit envelopes from Lemmas~\ref{lem:xbar-envelope}–\ref{lem:sigma-envelope} give admissible choices for the $(\alpha, \beta, \gamma)$ in Assumption~\ref{assumption:rates}:
\begin{align*}
    \alpha_{n,p}(\delta)\ &\coloneqq\ \max \left\{\,2\sqrt{\frac{v\,\log(4p/\delta)}{m}}\,,\ \frac{2c\,\log(4p/\delta)}{m}\right\}\ +\ \gamma_{n,p}(\delta)^2  \quad\text{with }c\le B_x^2+p,\\
    \beta_{n,p}(\delta)\ &\coloneqq\ \sqrt{\frac{p+t_q(\delta)}{m}}\ \sqrt{\bar r_2+t_{r^2}(\delta)}\ +\ \gamma_{n,p}(\delta)\,t_r(\delta),\\
    \gamma_{n,p}(\delta)\ &\coloneqq\ \sqrt{\frac{p+t_q(\delta)}{m}}.
\end{align*}

Furthermore, under diffuse leverage regimes per Assumption \ref{assump:leverage}, (e.g., $B_x^2\lesssim n\kappa_n$ with $\kappa_n\lesssim p/n$), the leading terms (up to logarithms) scale as
\[
    \alpha_{n,p}(\delta)\ \asymp\ \sqrt{\frac{p}{m}}\;+\;\gamma_{n,p}(\delta)^2 \asymp \sqrt{\frac{p}{m}}, \qquad
    \beta_{n,p}(\delta)\ \asymp\ \sqrt{\frac{p}{m}}\ \sqrt{\bar r_2},\qquad
    \gamma_{n,p}(\delta)\ \asymp\ \sqrt{\frac{p}{m}},
\]
matching the canonical rates summarized in Remark~\ref{rem:canonical_rates} (given $m \asymp n$).

\subsection{Proofs of the lemmas in Section \ref{sec:envelopes}}\label{sec:proof_lemmas}
\subsubsection{Proof of Lemma~\ref{lem:xbar-envelope}}
\begin{proof}[Proof of Lemma~\ref{lem:xbar-envelope}]
By Cauchy--Schwarz (or Jensen),
\[
\Big\|\frac{1}{m}\sum_{i\in\cS}x_i\Big\|^2
= \frac{1}{m^2}\Big\|\sum_{i\in\cS}x_i\Big\|^2
\ \le\ \frac{1}{m^2}\, m \sum_{i\in\cS}\|x_i\|^2
= \frac{1}{m}\,\Big(\frac{1}{m}\sum_{i\in\cS}\|x_i\|^2\Big).
\]
Thus
\[
\|\bar x_{\cS}\|
\ \le\ \sqrt{\frac{1}{m}\left(\bar q + \Big[\frac{1}{m}\sum_{i\in\cS}\|x_i\|^2 - \bar q\Big]\right)}.
\]
Apply Corollary~\ref{cor:delta-level}\ref{item:scalar-invert} to the scalars $a_i=\|x_i\|^2$ with $A=0$ and $B=B_x^2$ to obtain, with probability at least $1-\delta$,
\(
\frac{1}{m}\sum_{i\in\cS}\|x_i\|^2 \le \bar q + t_q(\delta).
\)
Substitute this into the previous display to get \eqref{eq:xbar-explicit}.
\end{proof}

\subsubsection{Proof of Lemma~\ref{lem:u-envelope}}
\begin{proof}[Proof of Lemma~\ref{lem:u-envelope}]
Decompose
\[
u_{\cS}(x;r)
= \frac{1}{m}\sum_{i\in\cS}x_i r_i - \bar x_{\cS}\,\bar r_{\cS}
=:\ U_1 - \bar x_{\cS}\,\bar r_{\cS},
\qquad
\bar r_{\cS}\coloneqq \frac{1}{m}\sum_{i\in\cS}r_i.
\]

\emph{Step 1. First term.}
By Cauchy--Schwarz over the sample,
\[
\|U_1\|
\ \le\ \frac{1}{m}\sqrt{\sum_{i\in\cS}\|x_i\|^2}\ \sqrt{\sum_{i\in\cS}r_i^2}
= \sqrt{\frac{1}{m}\Big(\frac{1}{m}\sum_{i\in\cS}\|x_i\|^2\Big)}
\ \sqrt{\frac{1}{m}\sum_{i\in\cS}r_i^2}.
\]
Apply Corollary~\ref{cor:delta-level}\ref{item:scalar-invert} to $\{a_i=\|x_i\|^2\}$ (with $A=0$, $B=B_x^2$) and to $\{a_i=r_i^2\}$ (with $A=0$, $B=B_r^2$) at level $\delta/3$. Then, with probability at least $1-\tfrac{2\delta}{3}$,
\[
\frac{1}{m}\sum_{i\in\cS}\|x_i\|^2 \le \bar q + t_q(\delta/3),
\qquad
\frac{1}{m}\sum_{i\in\cS}r_i^2 \le \bar r_2 + t_{r^2}(\delta/3),
\]
and hence
\[
\|U_1\|
\ \le\ \sqrt{\frac{\bar q+t_q(\delta/3)}{m}}\ \sqrt{\bar r_2+t_{r^2}(\delta/3)}.
\]
Both bounds follow from Corollary~\ref{cor:delta-level}\ref{item:scalar-invert} applied at level $\delta/3$.

\emph{Step 2. Second term.}
By the triangle inequality,
\[
\|\bar x_{\cS}\|\ |\bar r_{\cS}|
\ \le\ \|\bar x_{\cS}\|\ \Big(|\bar r| + |\bar r_{\cS}-\bar r|\Big).
\]
Apply Corollary~\ref{cor:delta-level}\ref{item:scalar-invert} to $\{a_i=r_i\}$ and $\{-a_i\}$
(two-sided) with range $B-A\le 2B_r$ at level $\delta/3$:
\(
|\bar r_{\cS}-\bar r| \le t_r(\delta/3)
\)
with probability at least $1-\delta/3$.
In addition, Lemma~\ref{lem:xbar-envelope} (at level $\delta/3$) controls $\|\bar x_{\cS}\|$.

\emph{Step 3. Combine.}
By a union bound over the three events (for $\|x\|^2$, $r^2$, and $r$), with probability at least $1-\delta$ we have
\[
\|u_{\cS}(x;r)\|
\ \le\ \sqrt{\frac{\bar q+t_q(\delta/3)}{m}}\ \sqrt{\bar r_2+t_{r^2}(\delta/3)}
\ +\ \|\bar x_{\cS}\|\ \Big(|\bar r| + t_r(\delta/3)\Big).
\]
Finally, since $t_q(\cdot)$, $t_{r^2}(\cdot)$, and $t_r(\cdot)$ are nonincreasing in their confidence level (they are non-decreasing in $1/\delta$ through $\log(1/\delta)$), we may replace $\delta/3$ by $\delta$ to obtain a slightly looser but cleaner bound. This yields \eqref{eq:u-explicit}.
\end{proof}

\subsubsection{Proof of Lemma~\ref{lem:sigma-envelope}}
\begin{proof}[Proof of Lemma~\ref{lem:sigma-envelope}]
Write the centered/uncentered decomposition
\[
\Sigma_{\cS}-\frac{1}{n}\sum_{i=1}^n x_i x_i^\top
=
\underbrace{\frac{1}{m}\sum_{i\in\cS}\Big(x_i x_i^\top-\tfrac{1}{n}\sum_{j=1}^n x_j x_j^\top\Big)}_{\eqqcolon\,S_{\cS}}
\ -\ \bar x_{\cS}\bar x_{\cS}^\top.
\]
Hence
\(
\opnorm{ \Sigma_{\cS}-\frac{1}{n}\sum_i x_i x_i^\top }
\le \opnorm{ S_{\cS} } + \|\bar x_{\cS}\|^2.
\)
Apply Corollary~\ref{cor:delta-level}\ref{item:matrix-invert} to $M_i\coloneqq x_i x_i^\top-\frac{1}{n}\sum_j x_j x_j^\top$ at level $\delta/2$, which yields
\[
\opnorm{ S_{\cS} }
\ \le\ \max \left\{\,2\sqrt{\frac{v\,\log(4p/\delta)}{m}}\,,\ \frac{2c\,\log(4p/\delta)}{m}\right\}
\quad\text{with probability at least }1-\delta/2.
\]
Combine this with Lemma~\ref{lem:xbar-envelope} at level $\delta/2$ and take a union bound to obtain \eqref{eq:Delta-explicit}.
Under the design normalization $\bone_n^\top X=0$ and $X^\top X=nI_p$, $n^{-1}\sum_i x_i x_i^\top=I_p$ and \eqref{eq:Delta-explicit-design} follows immediately.
\end{proof}

\section{Additional discussion on examples}\label{sec:additional_examples}

\subsection{More details on Example \ref{example:exp_k.0} in Section \ref{sec:neumann_estimation}}\label{sec:example_design_k.0}

Here we prove equation \eqref{eqn:Neumann_exp_k.0}, which states that under Assumption \ref{assump:covariates} ($\bone_n^\top X = 0$ and $X^\top X = n I_p$), 
\[
    \xi^{[0]}_i(m; X)
        = \frac{(m-1)(n-m)n }{ m^2(n-1)(n-2)} \, \Bigl( \|x_i\|_2^2 - p \Bigr).
\]

\begin{proof}[Proof of \eqref{eqn:Neumann_exp_k.0}]
    Let $\bbE_i[ \cdot ] \equiv \bbE\bigl[ \cdot  \bigm| i \in \cS \bigr]$ (recall $\cS \subset [n]$ with $|\cS| = m$). 
    \begin{align*}
        \xi^{[0]}_i(m; X) 
            &= \bbE_i\bigl[ \bar{x}_{\cS}^{\top} (x_i - \bar{x}_{\cS}) \bigr] \\
            &=  \bbE_i\bigl[ \bar{x}_{\cS} \bigr]^{\top} x_i
                -  \bbE_i\bigl[ \bar{x}_{\cS}^{\top} \bar{x}_{\cS} \bigr].
    \end{align*}
    
    First of all,
    \begin{align*}
        \bbE_i\bigl[ \bar{x}_{\cS} \bigr]
            &= \frac{1}{m} \sum_{j=1}^n \bbE_i\bigl[ x_j \cdot \Ind\{j \in \cS\} \bigr]\\
            &= \frac{1}{m} \sum_{j=1}^n \Pr\bigl( j \in \cS | i \in \cS \bigr) \cdot  x_j\\
            &= \frac{1}{m} \biggl( x_i + \frac{m-1}{n-1}\sum_{j \in [n]\setminus\{i\}} x_j \biggr)\\
            &= \frac{n-m}{m (n-1)} x_i
    \end{align*}
    because $\bone_n^{\top} X = \sum_{j=1}^n x_j = 0$ implies $\sum_{j\neq i}x_j=-x_i$.
    
    Secondly,
    \begin{align*}
        \bbE_i\bigl[ \bar{x}_{\cS}^{\top} \bar{x}_{\cS} \bigr]
            &= \frac{1}{m^2} \sum_{j, k=1}^n \bbE_i\bigl[ \, x_j^{\top} x_{k} \cdot \Ind\{j, k \in \cS\} \, \bigr]\\
            &= \frac{1}{m^2} \sum_{j, k=1}^n \Pr\bigl( j, k \in \cS | i \in \cS \bigr) \cdot x_j^{\top} x_{k}.
    \end{align*}
    Partition the possible configurations of $(j, k)$ pair into
    \begin{itemize}
        \item 
        $j = k = i$: 
        $\Pr\bigl( j, k \in \cS | i \in \cS \bigr) = 1$.
        \item 
        $j = i$ and $k \neq i$, or $j \neq i$ and $k = i$:
        $\Pr\bigl( j, k \in \cS | i \in \cS \bigr) = \frac{m-1}{n-1}$.
        \item 
        $j, k \neq i$ and $j = k$:
        $\Pr\bigl( j, k \in \cS | i \in \cS \bigr) = \frac{m-1}{n-1}$.
        \item 
        $j, k \neq i$ and $j \neq k$:
        $\Pr\bigl( j, k \in \cS | i \in \cS \bigr) = \frac{(m-1)(m-2)}{(n-1)(n-2)}$.
    \end{itemize}
    With $\pi_1 \coloneqq \tfrac{m-1}{n-1}$ and $\pi_2 \coloneqq \tfrac{(m-1)(m-2)}{(n-1)(n-2)}$,
    \begin{align*}
        &\sum_{j, k=1}^n \Pr\bigl( j, k \in \cS | i \in \cS \bigr) \cdot x_j^{\top} x_{k}\\
            &\qquad= x_i^{\top} x_i
                + \pi_1 \cdot \Bigl( x_i^{\top} \sum_{k \neq i} x_{k} + \sum_{j \neq i} x_{j}^{\top} x_i \Bigr)
                + \pi_1 \cdot \sum_{j \neq i} x_j^{\top} x_j
                + \pi_2 \cdot \underbrace{\sum_{j \in [n] \setminus \{i\}} \sum_{k \in [n] \setminus \{i,j\} } x_j^{\top} x_{k}}_{= - \sum_{j \in [n] \setminus\{i\}} (x_j^{\top} x_i + x_j^{\top} x_j)}\\
            &\qquad= x_i^{\top} x_i + \pi_1 \cdot  x_i^{\top} \underbrace{\sum_{k \neq i} x_{k}}_{=-x_i} 
                + \bigl( \pi_1 - \pi_2 \bigr)\cdot \underbrace{\sum_{j \neq i} x_{j}^{\top}}_{=-x_i^{\top}} x_i
                + \bigl( \pi_1 - \pi_2 \bigr)\cdot  \underbrace{\sum_{j \neq i} \| x_j\|^2}_{=n p - \| x_i \|^2 }\\
            &\qquad= \bigl( \pi_1 - \pi_2 \bigr)\cdot \underbrace{\sum_{j=1}^n \|x_j\|_2^2}_{=\tr(XX^{\top}) = np} 
                + \bigl( 1 - 3\pi_1 + 2\pi_2 \bigr) \|x_i\|^2,
    \end{align*}
    where $\pi_1-\pi_2=\frac{(m-1)(n-m)}{(n-1)(n-2)}$ and $1-3\pi_1+2\pi_2=\frac{(n-m)(n-2m)}{(n-1)(n-2)}$.
    
    All in all, 
    \begin{align*}
        \xi^{[0]}_i(m; X)
            &= \frac{(n-m)(m-1)}{m^2} \cdot \frac{ n }{ (n-1)(n-2)} \Bigl( \|x_i\|_2^2 - p \Bigr).
    \end{align*}
\end{proof}

\subsection{Illustration of words, partitions and design monomials in Section \ref{sec:design_expectation}}\label{sec:example_word_partition}

In this section, we provide an example that illustrates the combinatorial notions introduced in Section \ref{sec:design_expectation}.

\begin{example}[Word, partition, and monomial: $d=1$]\label{example:word_partition}
    Suppose that $d=1$, and let $\theta = (U,i)$. 
    Then $L_{\theta} = 2$, $\cV_{\theta} = \{1,2\}$, and $\cE_{\theta} = \{ (1,2) \}$. 
    There are two possible partitions:
    \[
        \pi_1 = \bigl\{ \{1\}, \{2\} \bigr\} ~(|\pi_1|=2),
        \qquad
        \pi_2 = \bigl\{ \{1, 2\} \bigr\} ~(|\pi_2|=1).
    \]

    For $\pi_1$, let $\omega = (\omega_1, \omega_2)$ with $\omega_1 \neq \omega_2$. 
    Then the corresponding design monomial
    \[
        \varphi_{\omega}(X; i) = ( x_{\omega_1}^{\top} x_{\omega_2} ) ( x_{\omega_2}^{\top} x_i ).
    \]
    For $\pi_2$, there is one block, $\omega = (\omega_1)$, and
    \[
        \varphi_{\omega}(X; i) = ( x_{\omega_1}^{\top} x_{\omega_1} ) ( x_{\omega_1}^{\top} x_i ).
    \]
\end{example}

The rest of this section elaborates on the combinatorial notions considering degree $d=1$, including Example~\ref{example:word_partition}, in full detail.

\paragraph{Recollecting definitions.}
We begin by recalling the combinatorial ingredients from Section~\ref{sec:design_expectation}; see Definitions~\ref{defn:word}--\ref{defn:allocation} and Theorem~\ref{thm:xi_formula}. 
Recall that the SRSWOR weights (Lemma \ref{lem:SRSWOR_inclusion}) are
\[
    \psi_0(\pi)=\frac{(m-1)_{|\pi|}}{(n-1)_{|\pi|}},
    \qquad
    \psi_1(\pi)=\frac{(m-1)_{|\pi|-1}}{(n-1)_{|\pi|-1}},
\]
where $(a)_\ell = a(a-1)\cdots(a-\ell+1)$ denotes the falling factorial. 
The sign and multiplicity of a word $\theta=(\phi,\chi)$ are
\[
    \operatorname{sgn}_\theta=(-1)^{u(\phi)+\Ind\{\chi=S\}},
    \qquad
    L_\theta=1+u(\phi)+2v(\phi)+\Ind\{\chi=S\},
\]
where $u(\phi) \coloneqq |\{a:\phi_a=U\}|$ and $v(\phi) \coloneqq |\{a:\phi_a=V\}|$ (cf. Definition~\ref{defn:word}). 
The design monomial (Definition~\ref{defn:allocation}) for a word $\theta$ and partition $\pi$ is
\[
    \varphi_\omega(X;i)=\Bigg(\prod_{(s,t)\in \cE_\theta}  \langle x_{\omega(\iota_\pi(s))},\,x_{\omega(\iota_\pi(t))}\rangle\Bigg)\cdot
    \big\langle x_{\omega(\iota_\pi(L_\theta))},\,x_i\big\rangle^{\Ind\{\chi=i\}},
\]
with $\iota_\pi:\cV_\theta\to[|\pi|]$ the block index map (Definition~\ref{defn:partition}).
Summing over Class~0/1 allocations (avoid/contain $i$ in exactly one block) gives $\Phi_{\pi,0}(X;i)$ and $\Phi_{\pi,1}(X;i)$ (Eq.~\eqref{eqn:diagram_aggregates}), which enter Theorem~\ref{thm:xi_formula}:
\[
    \xi^{[d]}_i(m;X)=  \sum_{\theta\in\{I,U,V\}^d\times\{i,S\}}  
    \frac{\operatorname{sgn}_\theta}{m^{L_\theta}}
    \sum_{\pi\in\Part(\cV_\theta)}\Big\{ \psi_0(\pi)\Phi_{\pi,0}(X;i)+\psi_1(\pi)\Phi_{\pi,1}(X;i)\Big\}.
\]
All of the notions above are specialized below to $d=1$, where $\varphi\in\{I,U,V\}$ and $\chi\in\{i,S\}$.%

\paragraph{Degree $d=1$: Enumeration of words, signs and multiplicities.}
There are six possible words when $d=1$:
\[
    (I,i),\ (I,S),\ (U,i),\ (U,S),\ (V,i),\ (V,S).
\]
Their $(\operatorname{sgn}_\theta,L_\theta)$ and position graphs $\cG_\theta=(\cV_\theta,\cE_\theta)$ follow from Definition~\ref{defn:position_graph}:
\begin{itemize}
    \item $\theta=(I,i)$: $\operatorname{sgn}_\theta=+1$, $L_\theta=1$, $\cV_\theta=\{1\}$, $\cE_\theta=\varnothing$.
    \item $\theta=(I,S)$: $\operatorname{sgn}_\theta=-1$, $L_\theta=2$, $\cV_\theta=\{1,2\}$, $\cE_\theta=\{(1,2)\}$.
    \item $\theta=(U,i)$: $\operatorname{sgn}_\theta=-1$, $L_\theta=2$, $\cV_\theta=\{1,2\}$, $\cE_\theta=\{(1,2)\}$.
    \item $\theta=(U,S)$: $\operatorname{sgn}_\theta=+1$, $L_\theta=3$, $\cV_\theta=\{1,2,3\}$, $\cE_\theta=\{(1,2),(2,3)\}$.
    \item $\theta=(V,i)$: $\operatorname{sgn}_\theta=+1$, $L_\theta=3$, $\cV_\theta=\{1,2,3\}$, $\cE_\theta=\{(1,2)\}$.
    \item $\theta=(V,S)$: $\operatorname{sgn}_\theta=-1$, $L_\theta=4$, $\cV_\theta=\{1,2,3,4\}$, $\cE_\theta=\{(1,2),(3,4)\}$.
\end{itemize}
For $|\cV_\theta|=1,2,3,4$ the number of set partitions is $1,2,5,15$ (Bell numbers), respectively. 
Now we illustrate a few example cases.

\begin{enumerate}
    \item 
    \textbf{Worked case A: $\theta=(U,i)$ (the case for Example \ref{example:word_partition}).}
    Here $L_\theta=2$, $\cE_\theta=\{(1,2)\}$, so $\cV_\theta=\{1,2\}$ has two partitions:
    \[
        \pi_1=\big\{\{1\},\{2\}\big\}\quad(\,|\pi_1|=2\,),\qquad
        \pi_2=\big\{\{1,2\}\big\}\quad(\,|\pi_2|=1\,).
    \]
    Let $\omega=(\omega_1,\omega_2)$ be injective for $\pi_1$ and $\omega=(\omega_1)$ for $\pi_2$.
    
    \emph{(a) Design monomials.}  
    Since $\chi=i$, there is a terminal factor $\langle x_{\omega(\iota_\pi(L_\theta))},x_i\rangle$ with $L_\theta=2$ and $\iota_\pi(2)$ the block of position $2$:
    \begin{align*}
        \pi_1=\{\{1\},\{2\}\}:&\quad
        \varphi_\omega(X;i)=\underbrace{\langle x_{\omega_1},x_{\omega_2}\rangle}_{(1,2)\text{ cross-edge}}\cdot
        \underbrace{\langle x_{\omega_2},x_i\rangle}_{\text{terminal}};\\
        \pi_2=\{\{1,2\}\}:&\quad
        \varphi_\omega(X;i)=\underbrace{\langle x_{\omega_1},x_{\omega_1}\rangle}_{(1,2)\text{ self-edge}=\|x_{\omega_1}\|_2^2}\cdot
        \underbrace{\langle x_{\omega_1},x_i\rangle}_{\text{terminal}}.
    \end{align*}
    Observe that these correspond to the design monomials shown in Example~\ref{example:word_partition}.
    
    \emph{(b) Class aggregates.}  
    Class~0 excludes $i$ from all blocks; Class~1 includes $i$ in exactly one block.
    \begin{align*}
        \Phi_{\pi_1,0}(X;i)
            &=\sum_{\substack{\omega_1\neq \omega_2\\ \omega_1\neq i,\ \omega_2\neq i}}
            \big(x_{\omega_1}^\top x_{\omega_2}\big)\big(x_{\omega_2}^\top x_i\big),\\
        \Phi_{\pi_1,1}(X;i)
            &=\sum_{\substack{\omega_2=i\\ \omega_1\neq i}}
            \big(x_{\omega_1}^\top x_i\big)\|x_i\|_2^2
            \;+\;
            \sum_{\substack{\omega_1=i\\ \omega_2\neq i}}
            \big(x_i^\top x_{\omega_2}\big)^2,\\[3pt]
        \Phi_{\pi_2,0}(X;i)
            &=\sum_{\omega_1\neq i}\|x_{\omega_1}\|_2^2\big(x_{\omega_1}^\top x_i\big),\\
        \Phi_{\pi_2,1}(X;i)
            &=\|x_i\|_2^4.
    \end{align*}
    
    \emph{(c) Weights and prefactor.}  
    Here $|\pi_1|=2$, $|\pi_2|=1$, $\operatorname{sgn}_\theta=-1$, $L_\theta=2$.  Hence the contribution of $\theta=(U,i)$ to $\xi^{[1]}_i(m;X)$ is
    \[
        \xi^{[1]}_i(m;X;\theta{=}(U,i))
            =
            -\frac{1}{m^2} \left[
            \frac{(m-1)_2}{(n-1)_2}\,\Phi_{\pi_1,0}
            +\frac{(m-1)_1}{(n-1)_1}\,\Phi_{\pi_1,1}
            +\frac{(m-1)_1}{(n-1)_1}\,\Phi_{\pi_2,0}
            +1\cdot\Phi_{\pi_2,1}
            \right] ,
    \]
    where the last weight equals $1$ since $(m-1)_0/(n-1)_0=1$.

    \item 
    \textbf{Worked case B: $\theta=(V,i)$ (a degree–1 ``$V$'' word).} 
    Here $\operatorname{sgn}_\theta=+1$, $L_\theta=3$, $\cE_\theta=\{(1,2)\}$.
    Thus $\cV_\theta=\{1,2,3\}$ and we illustrate three representative partitions:
    \[
        \pi_a=\big\{\{1,2\},\{3\}\big\},\qquad
        \pi_b=\big\{\{1\},\{2,3\}\big\},\qquad
        \pi_c=\big\{\{1\},\{2\},\{3\}\big\}.
    \]

    \emph{(a) Design monomials.}  
    With injective $\omega$, the monomials are:
    \begin{align*}
        \pi_a:\quad & \varphi_\omega(X;i)=\underbrace{\langle x_{\omega_1},x_{\omega_1}\rangle}_{(1,2)\text{ self-edge}=\|x_{\omega_1}\|_2^2}\cdot \underbrace{\langle x_{\omega_2},x_i\rangle}_{\text{terminal (pos.~3)}},\\
        \pi_b:\quad & \varphi_\omega(X;i)=\underbrace{\langle x_{\omega_1},x_{\omega_2}\rangle}_{(1,2)\text{ cross-edge}}\cdot \underbrace{\langle x_{\omega_2},x_i\rangle}_{\text{terminal (pos.~3)}},\\
        \pi_c:\quad & \varphi_\omega(X;i)=\underbrace{\langle x_{\omega_1},x_{\omega_2}\rangle}_{(1,2)\text{ cross-edge}}\cdot \underbrace{\langle x_{\omega_3},x_i\rangle}_{\text{terminal (pos.~3)}}.
    \end{align*}

    \emph{(b) Class aggregates.}  
    The corresponding Class~0/1 aggregates read
    \begin{align*}
        \Phi_{\pi_a,0}
            &=\sum_{\substack{\omega_1\neq\omega_2\\ \omega_1,\omega_2\neq i}}
            \|x_{\omega_1}\|_2^2\,\big(x_{\omega_2}^\top x_i\big),\\
        \Phi_{\pi_a,1}
            &=\sum_{\substack{\omega_2\neq i}}
            \|x_i\|_2^2\,\big(x_{\omega_2}^\top x_i\big)
            +\sum_{\substack{\omega_1\neq i}}
            \|x_{\omega_1}\|_2^2\,\|x_i\|_2^2,\\
        \Phi_{\pi_b,0}
            &=\sum_{\substack{\omega_1\neq\omega_2\\ \omega_1,\omega_2\neq i}}
            \big(x_{\omega_1}^\top x_{\omega_2}\big)\big(x_{\omega_2}^\top x_i\big),\\
        \Phi_{\pi_b,1}
            &=\sum_{\substack{\omega_2\neq i}}
            \big(x_i^\top x_{\omega_2}\big)^2
            +\sum_{\substack{\omega_1\neq i}}
            \big(x_{\omega_1}^\top x_i\big)\|x_i\|_2^2,\\
        \Phi_{\pi_c,0}
            &=  \sum_{\substack{\omega_1,\omega_2,\omega_3\ \text{all distinct}\\ \omega_1,\omega_2,\omega_3\neq i}}
            \big(x_{\omega_1}^\top x_{\omega_2}\big)\big(x_{\omega_3}^\top x_i\big),\\
        \Phi_{\pi_c,1}
            &=\sum_{\substack{\omega_2,\omega_3\neq i\\ \omega_2\neq \omega_3}}
            \big(x_i^\top x_{\omega_2}\big)\big(x_{\omega_3}^\top x_i\big)
            +\sum_{\substack{\omega_1,\omega_3\neq i\\ \omega_1\neq \omega_3}}
            \big(x_{\omega_1}^\top x_i\big)\big(x_{\omega_3}^\top x_i\big)
            +\sum_{\substack{\omega_1,\omega_2\neq i\\ \omega_1\neq \omega_2}}
            \big(x_{\omega_1}^\top x_{\omega_2}\big)\|x_i\|_2^2.
    \end{align*}

    \emph{(c) Weights and prefactor.}
    The weights are $\psi_0(\pi_a)=\psi_0(\pi_b)=(m - 1)_2/(n - 1)_2$, $\psi_1(\pi_a)=\psi_1(\pi_b)=(m - 1)_1/(n - 1)_1$, and
    $\psi_0(\pi_c)=(m - 1)_3/(n - 1)_3$, $\psi_1(\pi_c)=(m - 1)_2/(n - 1)_2$.  With $\operatorname{sgn}_\theta=+1$ and $L_\theta=3$, the total contribution of $\theta=(V,i)$ is
    \[
        \xi^{[1]}_i(m;X;\theta{=}(V,i))
            =
            \frac{1}{m^3}\sum_{\pi\in\{\pi_a,\pi_b,\pi_c\}}
            \Big\{\psi_0(\pi)\Phi_{\pi,0}+\psi_1(\pi)\Phi_{\pi,1}\Big\}.
    \]

    \item 
    \textbf{Worked case C: $\chi=S$.}
    When $\chi=S$ there is no terminal factor (Definition~\ref{defn:allocation}), but the position graph contains an extra edge $(s,s{+}1)$ at the end by construction (Definition~\ref{defn:position_graph}). 
    Thus all monomials are pure products of inner products along edges, and the sign flips by $(-1)$ relative to the corresponding $\chi=i$ case. 
    For example, with $\theta=(U,S)$ we have $L_\theta=3$ and $\cE_\theta=\{(1,2),(2,3)\}$; if $\pi=\{\{1,2,3\}\}$,
    \[
        \varphi_\omega(X;i)\;=\;\langle x_{\omega_1},x_{\omega_1}\rangle\cdot \langle x_{\omega_1},x_{\omega_1}\rangle
            \;=\;\|x_{\omega_1}\|_2^4,
    \]
    and the weight pair is $(\psi_0,\psi_1)=\big((m - 1)_1/(n - 1)_1,\ (m-1)_0/(n-1)_0\big)=\big((m - 1)/(n - 1),\,1\big)$ since $|\pi|=1$. 
    All other partitions are handled analogously by classifying each edge as self vs.\ cross at the partition level.
\end{enumerate}

\paragraph{Assembling the degree–1 coefficient $\xi^{[1]}_i(m;X)$.}
Summing the contributions of the six words with their $(\operatorname{sgn}_\theta,L_\theta)$ and $(\psi_0,\psi_1)$ yields the degree–1 design coefficient
\[
    \xi^{[1]}_i(m;X)
        =
        \sum_{\theta\in\{(I,i),(I,S),(U,i),(U,S),(V,i),(V,S)\}}
        \frac{\operatorname{sgn}_\theta}{m^{L_\theta}}
        \sum_{\pi\in\Part(\cV_\theta)}\Big\{\psi_0(\pi)\Phi_{\pi,0}(X;i)+\psi_1(\pi)\Phi_{\pi,1}(X;i)\Big\}.
\]
In applications we evaluate these sums efficiently using the Folding–M\"{o}bius procedure of Appendix~\ref{sec:computation_expectation}, which avoids explicit enumeration over injective allocations $\omega$.
\medskip

\begin{remark}[Sanity check under design normalization]
    Under $\bone_n^\top X=0$ and $X^\top X=nI_p$, self–edges contribute $\|x_j\|_2^2=n h_j$ where $h_j$ is the leverage; cross–edges are entries of the Gram $G=XX^\top$.  
    Therefore, degree–1 terms combine quadratic products of $G$ (or of the leverage vector $h$) with linear terms in $G(:,i)$, cf. Section~\ref{sec:design_expectation}.
\end{remark}
\section{Algorithms to compute the design expectations in Theorem \ref{thm:xi_formula}}\label{sec:computation_expectation}

This section describes implementable procedures to compute the Neumann weights $\xi^{[d]}_i(m;X)$ in Theorem~\ref{thm:xi_formula} and the expectations $\tfrac{1}{n}\langle \xi^{[d]}, r^{(\arm)}\rangle=\mathbb{E}[\sfR^{[d]}_{\arm}]$. 
We proceed in two steps: we first evaluate \emph{all‑assignment} aggregates at the Gram level via linear contractions; then recover the injective sums required by Theorem~\ref{thm:xi_formula} using M\"{o}bius inversion on the partition lattice.
Section \ref{sec:auxiliary_definitions} introduces the necessary auxiliary definitions to formally describe the procedure. 
Section \ref{sec:folding_algorithms} presents two algorithms: a vector-form \emph{Folding–M\"{o}bius} algorithm for $\xi^{[d]}$ (Algorithm \ref{alg:folding_mobius}) and a \emph{scalar} variant that computes $\mathbb{E}[\sfR^{[d]}_{\arm}]$ without forming $\xi^{[d]}$ explicitly (Algorithm \ref{alg:scalar_folding_mobius}).


\subsection{Auxiliary definitions}\label{sec:auxiliary_definitions}
Recall the essential definitions and notation from Section \ref{sec:design_expectation}: 
words $\theta=(\phi,\chi)$, position graph $\cG_{\theta} = (\cV_{\theta}, \cE_{\theta})$, partitions
$\pi=\{P_1,\dots,P_\ell\}\in\Part(\cV_\theta)$ with index map $\iota_\pi:\cV_\theta\to[\ell]$, and the design monomials $\varphi_{\omega}(X; i)$. 
Directly summing $\varphi_\omega$ over all \emph{injective} $\omega$ directly requires a combinatorial enumeration, which can be computationally prohibitive for large $n$ and $d$. 
Allowing multi-allocations collapses this sum into a sequence of simpler \emph{Gram-level} linear operations, such as matrix–vector products and element-wise (Hadamard) products, which can be computed much more efficiently. 
However, relaxing the injectivity leads to multiple counting of certain multi-index patterns; these over-counted multi-indices can be corrected back via a M\"{o}bius inversion (inclusion–exclusion principle) on the block-partition lattice and recover the injective sums. 
In this section, we introduce three auxiliary objects that make the algorithm precise and efficient: (i) a quotient position multigraph encoding block‑level edge multiplicities, (ii) component‑restricted all‑assignment aggregates, and (iii) a component-wise folding operator that evaluates these aggregates via Gram‑level linear operations.

\subsubsection{Multi-allocations, quotient multigraphs, and component-restricted aggregates}

Directly summing design monomials $\varphi_\omega$ over injective block assignments requires a combinatorial enumeration, which is prohibitive for large $n$ and $d$. 
We (i) relax injectivity to form \emph{all-assignment} sums that can be evaluated by linear contractions in the Gram $G=XX^\top$, and (ii) record the block-level inner-product pattern via a quotient multigraph. 
The result factorizes across connected components, enabling component-wise folds, however, this relaxation over‑counts multi‑index patterns. 
Ultimately, a M\"{o}bius inversion on the block‑partition lattice corrects the over‑counting and recovers the injective sums.

\paragraph{Multi-allocations.}
We extend the definition of allocations admissible to a word-partition pair by relaxing the injectivity requirement.

\begin{definition}[Multi-allocation]\label{defn:multi_allocation}
    For a word $\theta$ and a partition $\pi = \{ P_1, \dots, P_{\ell}\} \in \Part(\cV_{\theta})$, 
    a \emph{multi-allocation} admissible to $(\theta, \pi)$ is a map $\omega: [\ell] \to [n]$. 
    Denote by $\Omega^{\all}(\theta, \pi) \coloneqq [n]^{\ell}$ the set of all multi-allocations. 
\end{definition}

Note that an allocation admissible to $(\theta, \pi)$ (Definition \ref{defn:allocation}) is a multi-allocation with injectivity constraint. 
We define the aggregate of the design monomial akin to \eqref{eqn:diagram_aggregates}, but over all multi-allocations (not necessarily injective).

\begin{definition}[All-assignment aggregate]\label{defn:all_assignments}
    Let $X \in \RR^{n \times p}$. 
    For a word $\theta$ and a partition $\pi = \{ P_1, \dots, P_\ell\} \in \Part(\cV_{\theta})$, the $i$-th \emph{design monomial} induced by $X$ with respect to $\omega \in \Omega^{\all}(\theta, \pi)$ is    
    \begin{equation}\label{eqn:design_monomial_all}
        \varphi_{\omega}(X; i) 
            \coloneqq \biggl( \prod_{(s,t) \in \cE_{\theta}} x_{ \omega_{\iota_{\pi}(s)}}^{\top} x_{ \omega_{\iota_{\pi}(t)}} \biggr) 
            \times \bigl( x_{ \omega_{\iota_{\pi}(L_{\theta})}}^{\top} x_i \bigr)^{\Ind\{ \chi = i \}}.
    \end{equation}
    The \emph{all-assignment aggregate} of $X$ with respect to $\pi$ is $\Phi_{\theta, \pi}^{\all}(X) = \bigl( \Phi_{\theta, \pi}^{\all}(X; 1), \dots, \Phi_{\theta, \pi}^{\all}(X; n) \bigr) \in \RR^n$ such that
    \begin{equation}\label{eqn:all_assignments}
        \Phi_{\theta, \pi}^{\all}(X;i) = \sum_{\omega \in \Omega^{\all}(\theta, \pi)} \varphi_{\omega}(X; i),
        \qquad\forall i \in [n].
    \end{equation}
\end{definition}

The sum $\sum_{\omega\in\Omega^{\all}(\theta,\pi)}$ \emph{allows collisions} across blocks. 
Recall the definition of the position graph of a word $\theta = (\phi, \chi)$ (Definition \ref{defn:position_graph}). 
By construction: (i) $|\cE_\theta|=m_\theta\coloneqq u(\phi)+v(\phi)+\Ind\{\chi=\avg\}$, and (ii) its edges admit a canonical enumeration $(s_1,t_1),\dots,(s_{m_\theta},t_{m_\theta})$ with $s_1<\cdots<s_{m_\theta}$ and $s_a<t_a$ for all $a$, matching the construction order.
Throughout, positions are enumerated $1,\dots,L_\theta$; thus $L_\theta$ is both the length of the word and the index of the last position (the anchor position when $\chi=i$).

\paragraph{Quotient position multigraph.}
To summarize inner‑product factors at the block level, we collapse positions within each block and retain multiplicities of self- and cross-edges.

\begin{definition}[Quotient position multigraph]\label{def:quotient_graph}
    Let $\theta=(\phi,\chi)$ be a word with position graph $\cG_\theta=(\cV_\theta,\cE_\theta)$ and let $\pi=\{P_1,\dots,P_\ell\}\in\Part(\cV_\theta)$ with index map $\iota_\pi:\cV_\theta\to[\ell]$. 
    The \emph{quotient position multigraph} is
    \[
        \cQ_{\theta,\pi}=\big([\ell],\,\{m_a\}_{a=1}^\ell,\,\{c_{ab}\}_{1\le a<b\le \ell}\big),
    \]
    where
    \begin{align*}
        m_a &\coloneqq \bigl|\{(s,t)\in\cE_\theta:\ \iota_\pi(s)=\iota_\pi(t)=a\}\bigr|   &&\text{(self-edge multiplicity of block $a$)},\\
        c_{ab} &\coloneqq \bigl|\{(s,t)\in\cE_\theta:\ \{\iota_\pi(s),\iota_\pi(t)\}=\{a,b\}\}\bigr|  &&\text{(cross-edge multiplicity between $a$ and $b$)}.
    \end{align*}
    Its underlying simple graph is $\cQ_{\theta,\pi}^{\rm simp}=([\ell],\,\cE^{\rm simp})$ with $\cE^{\rm simp} \coloneqq \bigl\{\{a,b\}:\ c_{ab}>0\bigr\}$. 
    A vertex $a$ \emph{has a self-edge} if $m_a>0$, and a pair $\{a,b\}$ is a \emph{cross-edge} if $c_{ab}>0$. 
    When $\chi=i$, the \emph{anchor block} is $a^*\coloneqq \iota_\pi(L_\theta)$, and the \emph{anchor component} $C^*$ is the connected component of $\cQ_{\theta,\pi}^{\rm simp}$ containing $a^*$; when $\chi=\avg$, no anchor is designated.
\end{definition}

By construction, $m_a$ and $c_{ab}$ equal the numbers of (possibly repeated) self‑edges and cross‑edges of $\cE_\theta$ whose endpoints collapse to $a$ and $\{a,b\}$ under $\iota_\pi$, respectively.

\paragraph{Component-restricted aggregates.} 
Because the quotient’s underlying simple graph may be disconnected, aggregates factor across its connected components. 
Thus, we define per‑component contributions.

\begin{definition}[Restricted all-assignment aggregates]\label{def:component_aggregates}
    Let $\cQ_{\theta,\pi}=\big([\ell],\{m_a\},\{c_{ab}\}\big)$. 
    For $C\subseteq[\ell]$ (e.g., a connected component of $\cQ_{\theta,\pi}^{\rm simp}$), the \emph{all-assignment aggregate restricted to $C$} is 
    \[
        \Phi^{\all}_{\theta,\pi}[C](X;i)
            ~\coloneqq~
            \sum_{\omega_C\in [n]^C}\,
            \Biggl(\prod_{a\in C} g_{\omega_C(a)}^{\,m_a}\Biggr)
            \Biggl(\prod_{\substack{\{a,b\}\subseteq C:\\ a<b}} G_{\omega_C(a),\,\omega_C(b)}^{\,c_{ab}}\Biggr)
            \times
            \begin{cases}
                G_{\omega_C(a^*),\,i}, & \text{if }\chi=i\ \text{and } a^*\in C,\\
                1, & \text{otherwise,}
            \end{cases}
    \]
    where $G=XX^\top$ and $g=\diag(G)$.
\end{definition}

\begin{lemma}[Component factorization]\label{lem:component_factorization}
    Let $C_1,\dots,C_K$ be the connected components of $\cQ_{\theta,\pi}^{\rm simp}$. 
    Then, for any $X$ and any $i\in[n]$,
    \[
        \Phi^{\all}_{\theta,\pi}(X;i)
        ~=~
        \prod_{k=1}^K \Phi^{\all}_{\theta,\pi}[C_k](X;i).
    \]
\end{lemma}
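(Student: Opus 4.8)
The plan is to reduce the identity to the distributive law over a finite product index set, after first rewriting the design monomial in ``Gram-level'' block coordinates. First I would rewrite $\varphi_\omega(X;i)$ for an arbitrary multi-allocation $\omega\in[n]^\ell=\Omega^{\all}(\theta,\pi)$ purely in terms of $G=XX^\top$ and $g=\diag(G)$: each edge $(s,t)\in\cE_\theta$ contributes $x_{\omega_{\iota_\pi(s)}}^\top x_{\omega_{\iota_\pi(t)}}=G_{\omega_{\iota_\pi(s)},\,\omega_{\iota_\pi(t)}}$, which equals $g_{\omega_a}$ if both endpoints collapse to a single block $a$ and $G_{\omega_a,\omega_b}$ if they collapse to distinct blocks $a,b$. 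Collecting these factors by the unordered pair of blocks they map to under $\iota_\pi$ and invoking the definitions of $m_a$ and $c_{ab}$ (Definition~\ref{def:quotient_graph}) gives $\varphi_\omega(X;i)=\bigl(\prod_{a=1}^\ell g_{\omega_a}^{\,m_a}\bigr)\bigl(\prod_{1\le a<b\le\ell}G_{\omega_a,\omega_b}^{\,c_{ab}}\bigr)\cdot(G_{\omega_{a^*},\,i})^{\Ind\{\chi=i\}}$ with $a^*=\iota_\pi(L_\theta)$ (and no terminal factor when $\chi=\avg$). This is exactly the integrand of Definition~\ref{def:component_aggregates} with the component restriction removed.

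Next I would exploit the structure of $\cQ_{\theta,\pi}^{\rm simp}$. Since $c_{ab}>0$ only when $\{a,b\}\in\cE^{\rm simp}$, every cross-edge factor $G_{\omega_a,\omega_b}^{\,c_{ab}}$ couples two blocks lying in a common connected component $C_k$; each self-edge factor $g_{\omega_a}^{\,m_a}$ is internal to a single block, hence to a single component; and the terminal factor $G_{\omega_{a^*},\,i}$ (present iff $\chi=i$) is attached to the unique component $C^*$ containing the anchor block $a^*$. Therefore the block-level monomial factors as $\varphi_\omega(X;i)=\prod_{k=1}^K f_k(\omega_{C_k})$, where $\omega_{C_k}$ denotes the restriction of $\omega$ to the coordinates indexed by $C_k$, and $f_k$ is precisely the integrand defining $\Phi^{\all}_{\theta,\pi}[C_k](X;i)$ (carrying the terminal factor on the single component $C^*$).

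Finally I would invoke the canonical identification $[n]^\ell\cong\prod_{k=1}^K[n]^{C_k}$ together with the distributive law for finite sums (Fubini over a finite product): summing a product of functions, each depending on a separate coordinate block, over the product set equals the product of the per-block sums. This gives $\Phi^{\all}_{\theta,\pi}(X;i)=\sum_{\omega\in[n]^\ell}\prod_{k=1}^K f_k(\omega_{C_k})=\prod_{k=1}^K\sum_{\omega_{C_k}\in[n]^{C_k}}f_k(\omega_{C_k})=\prod_{k=1}^K\Phi^{\all}_{\theta,\pi}[C_k](X;i)$, which is the claim.

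The computation is essentially bookkeeping, and I anticipate no genuine obstacle; the one point to verify carefully is the factorization in the second step --- one must confirm that no inner-product factor of $\varphi_\omega$ ever couples blocks from two different components and that the terminal/anchor factor is assigned to exactly one component. Both follow directly from Definition~\ref{def:quotient_graph} (the cross-edges of $\cQ_{\theta,\pi}$ are precisely the edges of $\cQ_{\theta,\pi}^{\rm simp}$) and from the well-definedness of the anchor block $a^*=\iota_\pi(L_\theta)$. Some care is also warranted with the notation for restricting $\omega$ to a component so that the Fubini step is visibly valid.
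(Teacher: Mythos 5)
Your proposal is correct and follows essentially the same route as the paper's proof: rewrite the all-assignment sum in Gram-level block coordinates with multiplicities $(m_a,c_{ab})$, observe that $c_{ab}=0$ across distinct components so the summand factorizes componentwise (with the anchor factor in the anchor component), and then apply Fubini/the distributive law over $[n]^\ell\cong\prod_k[n]^{C_k}$. No gaps.
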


\begin{proof}[Proof of Lemma \ref{lem:component_factorization}]
    By Definition~\ref{defn:all_assignments} and Definition~\ref{def:quotient_graph}, grouping position-level factors by blocks and accounting for multiplicities $(m_a,c_{ab})$ in $\cQ_{\theta,\pi}$ rewrites the integrand (summand) as
    \[
        \Phi^{\all}_{\theta,\pi}(X;i)
            =\sum_{\omega\in[n]^\ell}\,\prod_{a=1}^{\ell} g_{\omega(a)}^{\,m_a}\,
            \prod_{1\le a<b\le \ell} G_{\omega(a),\,\omega(b)}^{\,c_{ab}}
            \times
            \begin{cases}
                G_{\omega(a^*),\,i}, & \chi=i,\\
                1, & \chi=\avg.
            \end{cases}
    \]
    Note that repeated cross‑edges and self‑edges in $\cE_\theta$ contribute multiplicatively and are equivalent to the powers $G^{c_{ab}}$ and $g^{m_a}$ used in Definition~\ref{def:component_aggregates}. 
    In particular, the induced list $\cE_\theta[C]$ includes repeated entries according to these multiplicities; each repetition triggers one contraction in the fold.

    If $a$ and $b$ lie in different components of $\cQ_{\theta,\pi}^{\rm simp}$, then $c_{ab}=0$, so there are no cross terms between different components. 
    Hence the integrand factorizes as a product of $K$ functions, each depending only on $\omega_{C_k}\in [n]^{C_k}$, with the anchor factor appearing only in the anchor component when $\chi=i$. 
    By Fubini’s theorem (finite sums), the sum over $\omega\in[n]^\ell$ factors into a product of sums over $\omega_{C_k}\in[n]^{C_k}$, which are exactly the $\Phi^{\all}_{\theta,\pi}[C_k](X;i)$ in Definition~\ref{def:component_aggregates}.
\end{proof}

\paragraph{Component folds (Gram contractions).}
The component aggregates in Definition~\ref{def:component_aggregates} can be evaluated without enumerating assignments: each edge in the induced subgraph corresponds to a single linear contraction at the Gram level. 
We formalize this as a \emph{fold} that scans the induced edges and alternates two primitive updates—$v\mapsto Gv$ for cross‑edges and $v\mapsto g\odot v$ for self‑edges—followed by a final summation $\bone^\top(\cdot)$. 
This yields exactly the component aggregate (Lemma~\ref{lem:restricted_fold_correct}).

\begin{definition}[Induced edge list]\label{def:induced-edge-list}
    Let $\{(s_a,t_a)\}_{a=1}^{m_\theta}$ be the canonical enumeration of $\cE_\theta$ (with $s_1<\cdots<s_{m_\theta}$ and $s_a<t_a$). 
    For $C\subseteq[\ell]$, define the \emph{induced edge list} on $C$ by
    \[
        \cE_\theta[C] ~\coloneqq~ \{\, (s_a,t_a)\in\cE_\theta :~ \iota_\pi(s_a)\in C\ \text{and}\ \iota_\pi(t_a)\in C \,\},
    \]
    ordered by the inherited canonical order of $\{(s_a,t_a)\}_{a=1}^{m_\theta}$.
\end{definition}

\begin{definition}[Component fold]\label{def:component_fold}
    Fix a word $\theta$, a partition $\pi$, and a component $C\subseteq[\ell]$ of $\cQ_{\theta,\pi}^{\rm simp}$.
    Set $M:=|\cE_\theta[C]|$ and enumerate $\cE_\theta[C]=\{(s_{a(m)},t_{a(m)})\}_{m=1}^M$ in increasing $a(m)$; write $b(u):=\iota_\pi(u)$.
    Let $G\in\RR^{n\times n}$ be symmetric and $v_{\init} \in \RR^n$. 
    The \emph{component fold} with respect to $G$ and $v_{\init}$ is
    \[
        \CompFold{C}(\theta,\pi;G, v_{\init}) \coloneqq \bone_n^\top v^{(0)}.
    \]
    with the vector $v^{(0)}\in\RR^n$ defined recursively by
    \begin{align*}
        v^{(M)} &= v_{\init},\\
        v^{(m-1)}
        &\coloneqq
        \begin{cases}
            g\odot v^{(m)}, & \text{if } b(s_{a(m)})=b(t_{a(m)}) \quad\text{(self-edge in $C$)},\\
            G\,v^{(m)},     & \text{if } b(s_{a(m)})\neq b(t_{a(m)}) \quad\text{(cross-edge in $C$)},
        \end{cases}
        \qquad
        \text{for }m=M,M-1,\dots,1,
    \end{align*}
    where $g = \diag(G)$, and $\odot$ denotes the Hadamard product. 
    
\end{definition}

\begin{remark}[Symmetry after masking]\label{rem:symm_mask}
    Since $P^{(-i)}$ is symmetric and idempotent, $G^{(-i)}=P^{(-i)} G P^{(-i)}$ is symmetric whenever $G$ is. 
    Hence the symmetry assumption on $G$ in Definition~\ref{def:component_fold} is fully compatible with masking.
\end{remark}

\begin{lemma}[Restricted folding equals component aggregate]\label{lem:restricted_fold_correct}
    For any word $\theta$, partition $\pi$, component $C\subseteq[\ell]$, matrix $X \in \RR^{n \times p}$, and index $i\in[n]$,
    \[
        \CompFold{C}(\theta,\pi; G^\star, v_{\init}^\star) = \Phi^{\all}_{\theta,\pi}[C](X;i),
    \]
    where $G^\star = XX^{\top}$ and 
    \[
        v_{\init}^\star =
        \begin{cases}
            G(:,i), & \text{if }\chi=i\ \text{ and } a^*\in C,\\
            \bone_n, & \text{otherwise}.
        \end{cases}
    \]
\end{lemma}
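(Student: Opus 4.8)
The plan is to unfold the fold recursion, expand it as a nested sum over ``chain indices,'' and match this term-by-term with the summand defining $\Phi^{\all}_{\theta,\pi}[C](X;i)$ in Definition~\ref{def:component_aggregates}. Write $G=G^\star=XX^\top$ and $g=\diag(G)$, abbreviate the induced edge list as $e_1,\dots,e_M$ taken in the canonical order (so $e_m=(s_{a(m)},t_{a(m)})$ with $a(1)<\dots<a(M)$), and $b(\cdot)=\iota_\pi(\cdot)$. By Definition~\ref{def:component_fold}, $v^{(0)} = A_1 A_2\cdots A_M\, v^\star_{\init}$, where $A_m=G$ if $e_m$ is a cross-edge ($b(s_{a(m)})\neq b(t_{a(m)})$) and $A_m=\diag(g)$ if $e_m$ is a self-edge. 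Hence
\[
    \CompFold{C}(\theta,\pi;G,v^\star_{\init})
        = \bone_n^\top A_1\cdots A_M\, v^\star_{\init}
        = \sum_{j_0,\dots,j_M\in[n]}\Bigl(\prod_{m=1}^M (A_m)_{j_{m-1},j_m}\Bigr)\,(v^\star_{\init})_{j_M},
\]
where a self-edge factor $(\diag(g))_{j_{m-1},j_m}=g_{j_{m-1}}\Ind\{j_{m-1}=j_m\}$ forces $j_{m-1}=j_m$ and contributes $g_{j_{m-1}}=G_{j_{m-1},j_{m-1}}$, while a cross-edge factor contributes $G_{j_{m-1},j_m}$.

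The core step is to show that, after collapsing the forced equalities from self-edges, the equivalence classes of the chain positions $\{0,1,\dots,M\}$ are in bijection with the blocks of $C$, compatibly with edge incidences; under this bijection the displayed sum becomes a sum over $\omega_C\in[n]^C$. I would prove this by induction on $M$. For $M=0$, $C$ is a single block with no self-edge, so $\Phi^{\all}_{\theta,\pi}[C](X;i)=\sum_{j\in[n]}(v^\star_{\init})_j=\bone_n^\top v^\star_{\init}$, matching the fold. For the inductive step, use that every edge of $\cG_\theta$ has the form $(t,t+1)$ (Definition~\ref{defn:position_graph}), so $\cG_\theta$ is a linear subgraph of the path on $\{1,\dots,L_\theta\}$, and that the induced edges are taken in canonical order: peel off the first-processed edge $e_M$, invoke Lemma~\ref{lem:component_factorization} to isolate its contribution, and observe that the update $v^{(M-1)}=A_M v^\star_{\init}$ exactly records the partial aggregate with the ``frontier'' block held at the free index. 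Propagating this frontier invariant through $m=M,M-1,\dots,1$ identifies $v^{(0)}$ with the aggregate in which all blocks of $C$ are assigned and the frontier block is left free, so $\bone_n^\top v^{(0)}$ sums over that block too.

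Given the bijection of the previous step, the remaining work is bookkeeping: each self-edge lying in block $a$ contributes one factor $g_{\omega(a)}$, so the $m_a$ self-edges of block $a$ contribute $g_{\omega(a)}^{\,m_a}$; each cross-edge between blocks $a,b$ contributes $G_{\omega(a),\omega(b)}$, so the $c_{ab}$ such edges contribute $G_{\omega(a),\omega(b)}^{\,c_{ab}}$; and $(v^\star_{\init})_{j_M}$ equals $G_{\omega(a^*),i}$ when $\chi=i$ and $a^*\in C$ (for then $v^\star_{\init}=G(:,i)$ and the frontier block is exactly $a^\star=\iota_\pi(L_\theta)$) and equals $1$ otherwise (for then $v^\star_{\init}=\bone_n$). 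This is precisely the summand of $\Phi^{\all}_{\theta,\pi}[C](X;i)$ in Definition~\ref{def:component_aggregates}, and summing over $\omega_C\in[n]^C$ finishes the proof.

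The main obstacle is the bijection step: establishing that the linear chain $A_1\cdots A_M$—which is intrinsically ``path-shaped''—correctly realizes the sum over $\omega_C\in[n]^C$ for the component $C$. This rests on the structure of $\cG_\theta$ (all edges of the form $(t,t+1)$) together with the canonical ordering of the induced edges, which is what guarantees that each backward step of the fold attaches the newly processed edge to the single active frontier block. Carefully formalizing this frontier invariant, and verifying it is preserved across self-edges, cross-edges, and the transitions between the constituent subpaths of $\cG_\theta$ that $\pi$ merges into $C$, is the technical heart of the argument; everything else follows routinely from Definitions~\ref{defn:all_assignments}, \ref{def:quotient_graph}, and~\ref{def:component_aggregates} together with Lemma~\ref{lem:component_factorization}.
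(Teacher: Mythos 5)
Your strategy is the same one the paper uses: unfold the recursion into $\bone_n^\top A_1\cdots A_M\,v^\star_{\init}$ and run a backward induction in which each fold step contracts the ``current free (frontier) block'' of the partial aggregate. The difficulty is that the step you yourself flag as the technical heart --- the bijection between the collapsed chain positions $j_0,\dots,j_M$ and the blocks of $C$, equivalently the preservation of the frontier invariant at every cross-edge --- is only announced, not proved, and as formulated it cannot be proved for arbitrary $(\theta,\pi)$: the partition may identify non-consecutive positions of the path-shaped graph $\cG_\theta$, so that a later-processed induced edge joins two blocks that have \emph{both} already been visited, and then the ``fresh label at $b(s_{a(m)})$'' picture breaks down. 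Concretely, take $d=1$, $\theta=(U,\avg)$, so $\cV_\theta=\{1,2,3\}$, $\cE_\theta=\{(1,2),(2,3)\}$, and $\pi=\{\{1,3\},\{2\}\}$. The quotient component is $C=\{1,2\}$ with $c_{12}=2$ and no self-edges, so Definition~\ref{def:component_aggregates} gives $\Phi^{\all}_{\theta,\pi}[C](X;i)=\sum_{\omega_1,\omega_2\in[n]}G_{\omega_1\omega_2}^{2}=\fnorm{G}^2$, whereas the fold processes two cross-edges and returns $\bone_n^\top G^2\bone_n$; these differ in general (under the paper's normalization $\bone_n^\top X=0$ the fold returns $0$ while $\fnorm{G}^2>0$). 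In other words, whenever the quotient component has parallel cross-edges ($c_{ab}\ge 2$) or a cycle, the chain of matrix products produces matrix powers where the aggregate requires entrywise powers, so the equivalence classes of chain positions are not in bijection with the blocks of $C$ and the induction you outline cannot close without an explicit structural restriction on $(\theta,\pi,C)$ guaranteeing that each induced edge, processed in reverse canonical order, has one endpoint in the current frontier block and the other in a block not yet assigned. Identifying and verifying that restriction (or modifying the evaluation) is precisely the missing content, so the proposal has a genuine gap at its central step.

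A secondary point: in the inductive step you ``peel off the first-processed edge $e_M$'' by invoking Lemma~\ref{lem:component_factorization}. That lemma factorizes the all-assignment aggregate across \emph{connected components} of $\cQ^{\rm simp}_{\theta,\pi}$; it says nothing about splitting off a single edge inside one component, so it does not support the peeling step. The bookkeeping in your final paragraph (self-edges giving $g_{\omega(a)}^{m_a}$, cross-edges giving $G_{\omega(a),\omega(b)}^{c_{ab}}$, and the anchor initialization giving $G_{\omega(a^*),i}$) is fine once the frontier invariant is in hand, but as written the argument stops exactly where the real work begins.
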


Here the fold uses $g^\star\coloneqq\diag(G^\star)$ implicitly in the Hadamard steps.

\begin{remark}[Order independence]
    Only the \emph{set} $\cE_\theta[C]$ matters. Each fold step eliminates one summation index via a linear contraction. 
    Changing the scan order permutes the order of summations but not their value; by Fubini's theorem, $\CompFold{C}$ is invariant to the order chosen.
\end{remark}

\begin{proof}[Proof of Lemma \ref{lem:restricted_fold_correct}]
    Fix $C$ and $i$. Let $\{(s_{a(m)},t_{a(m)})\}_{m=1}^M$ be the induced edge list $\cE_\theta[C]$ in increasing order of $a(\cdot)$. 
    We prove by backward induction on $m$ that $v^{(m)}\in\RR^n$ produced by Definition~\ref{def:component_fold} satisfies
    \[
        v^{(m)}_j
            ~=~ \sum_{\omega_C\in[n]^C}\;
            \prod_{\substack{r>m\\(s_{a(r)},t_{a(r)})\in\cE_\theta[C]}}
            \bigl\langle x_{\omega_C(b(s_{a(r)}))},\,x_{\omega_C(b(t_{a(r)}))}\bigr\rangle\,
            \times
            \begin{cases}
                \langle x_{\omega_C(a^*)},\,x_i\rangle, & \chi=i\ \text{and } a^*\in C,\\
                1,& \text{otherwise,}
            \end{cases}
    \]
    where, at step $m\ge 1$, the coordinate $j$ indexes the label of the (unique) \emph{currently free block} $b \big(t_{a(m)}\big)$, and at $m=0$ it indexes the last remaining free block determined by the updates. 
    The terminal sum $\bone_n^\top v^{(0)}$ then completes the summation over that block.

    \begin{itemize}
        \item 
        For the base case $m=M$: if $\chi=i$ and $a^*\in C$, then the only factor present is $\langle x_{\omega_C(a^*)},x_i\rangle$, so $v^{(M)}_j=\langle x_j,x_i\rangle$ agrees with the initialization $G(:,i)$; otherwise no factor is present and $v^{(M)}=\bone_n$.

        \item 
        Suppose the claim holds for $m$. Consider edge $(s_{a(m)},t_{a(m)})$. 
        If $b(s_{a(m)})=b(t_{a(m)})$, then introducing this self-edge multiplies the current partial sum pointwise by $g_j=\langle x_j,x_j\rangle$, which is exactly the Hadamard update $v^{(m-1)}=g\odot v^{(m)}$. 
        If $b(s_{a(m)})\neq b(t_{a(m)})$, then adding this cross-edge introduces a fresh label $k$ at $b(s_{a(m)})$ to be summed out against the current free label $j$ at $b(t_{a(m)})$, yielding
        \[
            v^{(m-1)}_j  =  \sum_{k=1}^n \langle x_k,x_j\rangle\, v^{(m)}_k  =  (G\,v^{(m)})_j,
        \]
        matching the update in the definition. 
        This proves the induction.
    \end{itemize}
    
    Consequently, at $m=0$, all edges in $\cE_\theta[C]$ have been accounted for; summing over the remaining free block label gives $\bone_n^\top v^{(0)}$, which equals $\Phi^{\all}_{\theta,\pi}[C](X;i)$ by Definition~\ref{def:component_aggregates}.
\end{proof}

\begin{remark}
    Under the covariate normalization $X^{\top} X = n I_p$ (Assumption \ref{assump:covariates} in Section \ref{sec:reg_adjustment}), the hat matrix $H = X(X^{\top}X)^{-1} X^{\top} = \frac{1}{n} G$. 
    If one prefers to work with $(H,h)$ instead of $(G,g)$, the fold remains valid provided one multiplies the final scalar by $n^{\,m_\theta}$ and, when $\chi=i$, initializes with $n\,H(:,i)$ in place of $G(:,i)$. 
    This exactly compensates for the $1/n$ factors introduced at each edge (and at initialization when $\chi=i$).
\end{remark}

\subsubsection{Index‑exclusion masking and Class 0/1 separation}
To separate injective allocations that \emph{avoid} $i$ (Class~0) from those that \emph{use} $i$ exactly once (Class~1), we evaluate the all‑assignment sums twice: unmasked (over $[n]$) and masked (over $[n]\setminus\{i\}$). 
Masking is implemented by zeroing the $i$‑th coordinate at every contraction step; algebraically this corresponds to replacing $(G,g)$ by $(G^{(-i)},g^{(-i)})$ and pre‑multiplying initial messages by $P^{(-i)}$.

Recall the Class 0/1 monomial aggregates, cf. \eqref{eqn:diagram_aggregates}:
\[
    \Phi_{\pi,0}(X;i)=\sum_{\omega\in\Omega(\theta,\pi)_0}\varphi_\omega(X;i),
    \qquad
    \Phi_{\pi,1}(X;i)=\sum_{\omega\in\Omega(\theta,\pi)_1}\varphi_\omega(X;i),
\]
where $\Omega(\theta,\pi)_0$ collects injective allocations that avoid $i$ and $\Omega(\theta,\pi)_1$ have exactly one block equal to $i$. 
We define the notion of mask to connect these Class 0/1 monomial aggregates to the all-assignment aggregates.

\begin{definition}[Mask at $i$]\label{def:mask}
    Let $X \in \RR^{n \times p}$. 
    For any $i \in [n]$, the \emph{$i$-th masked Gram objects} are defined as
    \begin{equation}\label{eq:mask-gram}
        G^{(-i)} \coloneqq P^{(-i)} XX^{\top} P^{(-i)}
        \qquad\text{and}\qquad
        g^{(-i)} = \diag( G^{(-i)})
    \end{equation}
    where $P^{(-i)} = I_n - e_i e_i^{\top}$.
\end{definition}

In implementations, one may apply the mask “on the fly’’ by performing the masked cross-edge contraction as
\[
    v ~\longmapsto~ P^{(-i)} \big(G\,(P^{(-i)} v)\big)
\]
at every cross‑edge step, and by replacing $g$ with the masked diagonal $g^{(-i)}=\diag \big(P^{(-i)} G P^{(-i)}\big)$ at self‑edge steps. 
This avoids forming $P^{(-i)}GP^{(-i)}$ explicitly while exactly enforcing the exclusion of index $i$.

\begin{definition}[Masked component-restricted aggregates]\label{def:masked_component_aggregates}
    With the same notation as Definition~\ref{def:component_aggregates}, for $i \in [n]$, define
    \[
        \Phi^{\all,(-i)}_{\theta,\pi}[C](X;i)
            ~\coloneqq~
            \sum_{\omega_C\in ([n]\setminus\{i\})^{C}}\,
            \Biggl(\prod_{a\in C} g_{\omega_C(a)}^{\,m_a}\Biggr)
            \Biggl(\prod_{\substack{\{a,b\}\subseteq C:\\ a<b}}
            (G_{\omega_C(a),\,\omega_C(b)})^{\,c_{ab}}\Biggr)
            \times
            \begin{cases}
                G_{\omega_C(a^*),\,i}, & \text{if }\chi=i\ \text{and } a^*\in C,\\
                1, & \text{otherwise.}
            \end{cases}
    \]
\end{definition}

Recall from Definition \ref{defn:all_assignments} that for any word $\theta$ and any $\pi \in \Part(\cV_{\theta})$, the \emph{all-assignment} aggregate
\[
    \Phi^{\all}_{\theta,\pi}(X;i) = \sum_{\omega\in \Omega^{\all}(\theta, \pi)}\ \varphi_\omega(X;i).
\]
Observe that $\Omega^{\all}(\theta, \pi) = [n]^{|\pi|}$, which allows collisions across blocks, and that $\Phi^{\all}_{\theta,\pi}(X;i)$ can be exactly evaluated by the folding procedure with $(G, g)$ as in Lemma \ref{lem:restricted_fold_correct}. 
Carrying out the fold with $(G^{(-i)},g^{(-i)})$, instead of $(G, g)$, and initializing with $v^{(M)}:=P^{(-i)}G e_i$ when $\chi=i$ and $a^*\in C$, and $v^{(M)}:=P^{(-i)}\bone_n$ otherwise, evaluates the all-assignment aggregate \emph{excluding} the label $i$, namely,
\begin{equation}\label{eqn:aggregate_excluding_i}
    \Phi^{\all,(-i)}_{\theta,\pi}(X;i)\ =\ \sum_{\omega\in ([n]\setminus\{i\})^{|\pi|}}\ \varphi_\omega(X;i),
\end{equation}
because each contraction step suppresses any path that would use the index $i$. 

\begin{lemma}[Masked restricted folding equals masked component aggregate]\label{lem:masked_fold_correct}
    Fix $i \in [n]$. 
    Let $G=XX^\top$, $P^{(-i)}=I-e_ie_i^\top$, and $G^{(-i)}=P^{(-i)} G P^{(-i)}$. 
    For any component $C\subseteq[\ell]$,
    \[
        \CompFold{C}\big(\theta,\pi;\,G^{(-i)}, v_{\mathrm{init}}^{(-i)}\big)
        = \Phi^{\all,(-i)}_{\theta,\pi}[C](X;i),
    \]
    where
    \[
        v_{\mathrm{init}}^{(-i)}  = 
        \begin{cases}
            P^{(-i)}G e_i, & \text{if }\chi=i\ \text{and } a^*\in C,\\
            P^{(-i)}\bone_n, & \text{otherwise.}
        \end{cases}
    \]
\end{lemma}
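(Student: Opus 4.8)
The plan is to adapt the backward induction from the proof of Lemma~\ref{lem:restricted_fold_correct} essentially verbatim, the only substantive change being that every free block label is now constrained to $[n]\setminus\{i\}$ rather than $[n]$. First I would record that $G^{(-i)}=P^{(-i)}GP^{(-i)}$ is symmetric (Remark~\ref{rem:symm_mask}), so $\CompFold{C}(\theta,\pi;G^{(-i)},v_{\mathrm{init}}^{(-i)})$ is well-defined in the sense of Definition~\ref{def:component_fold}, and that its Hadamard steps use $g^{(-i)}=\diag(G^{(-i)})$, which satisfies $g^{(-i)}_j=g_j$ for $j\neq i$ and $g^{(-i)}_i=0$ (since row and column $i$ of $G^{(-i)}$ vanish). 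The crux is an auxiliary \emph{masking invariant}: every intermediate vector $v^{(m)}$ of the masked fold has $v^{(m)}_i=0$. This holds at $m=M$ because $v^{(M)}=v_{\mathrm{init}}^{(-i)}=P^{(-i)}(\cdot)$ kills the $i$-th coordinate; it is preserved by each cross-edge step $v\mapsto G^{(-i)}v$ (row $i$ of $G^{(-i)}$ is zero) and by each self-edge step $v\mapsto g^{(-i)}\odot v$ (since $g^{(-i)}_i=0$).

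With this invariant available, I would run the same backward induction on the induced-edge index $m=M,M-1,\dots,1$ over $\cE_\theta[C]$ (Definition~\ref{def:induced-edge-list}) as in Lemma~\ref{lem:restricted_fold_correct}, but with its partial-sum invariant modified so that the sum runs over $\omega_C\in([n]\setminus\{i\})^C$ (restricted to the already-processed blocks), the coordinate $j$ still indexing the currently free block label. Base case: if $\chi=i$ and $a^*\in C$, then $v^{(M)}=P^{(-i)}Ge_i$ has $v^{(M)}_j=\langle x_j,x_i\rangle$ for $j\neq i$ and $0$ at $j=i$, which is exactly the anchor factor $\langle x_{\omega_C(a^*)},x_i\rangle$ with $\omega_C(a^*)$ restricted away from $i$; otherwise $v^{(M)}=P^{(-i)}\bone_n$ is the indicator of $[n]\setminus\{i\}$, i.e.\ the empty product over a free block ranging over $[n]\setminus\{i\}$. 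Inductive step: a self-edge multiplies the current partial sum pointwise by $g^{(-i)}_j=g_j=\langle x_j,x_j\rangle$ for $j\neq i$, introducing the self-edge factor without enlarging any block's range; a cross-edge yields $v^{(m-1)}_j=(G^{(-i)}v^{(m)})_j=\sum_{k\neq i}\langle x_k,x_j\rangle\,v^{(m)}_k$ for $j\neq i$, which sums out a fresh block label over $[n]\setminus\{i\}$ against the current free label. In both cases the modified partial-sum invariant and the masking invariant are preserved.

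At $m=0$ every edge of $\cE_\theta[C]$ has been consumed and, by the masking invariant, $\bone_n^\top v^{(0)}=\sum_{j\neq i}v^{(0)}_j$, so the terminal summation ranges the last free block over $[n]\setminus\{i\}$; by Definition~\ref{def:masked_component_aggregates} this equals $\Phi^{\all,(-i)}_{\theta,\pi}[C](X;i)$, proving the claim. For completeness I would also spell out the degenerate cases handled automatically by the recursion: when $\cE_\theta[C]=\varnothing$ (a singleton component with no self-edges) the fold returns $\bone_n^\top v_{\mathrm{init}}^{(-i)}$, equal to $n-1=\sum_{\omega_C\in([n]\setminus\{i\})^C}1$ when $\chi=\avg$ or $a^*\notin C$, and $\sum_{j\neq i}\langle x_j,x_i\rangle$ when $a^*\in C$; singleton components carrying only self-edges are covered by the Hadamard branch.

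I expect the main obstacle to be purely organizational: exactly as in Lemma~\ref{lem:restricted_fold_correct}, one must phrase the induction invariant so that the ``currently free coordinate $j$'' is correctly identified with the appropriate block label throughout the scan, and here one must additionally thread the masking constraint through that identification without disturbing it. Once this phrasing is pinned down, the algebra is identical to the unmasked case, since $P^{(-i)}$ is merely a coordinate projection that commutes with both contraction primitives ($v\mapsto Gv$ and $v\mapsto g\odot v$); no new concentration, counting, or structural ingredient is needed beyond what already appears in the proof of Lemma~\ref{lem:restricted_fold_correct}.
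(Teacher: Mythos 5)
Your proposal is correct and follows essentially the same route as the paper: the paper's proof simply reruns the backward induction of Lemma~\ref{lem:restricted_fold_correct}, noting that replacing $(G,g)$ by $(G^{(-i)},g^{(-i)})$ restricts all contracted labels to $[n]\setminus\{i\}$, that the $P^{(-i)}$ in the initialization masks the final free block, and that the anchor initialization $P^{(-i)}Ge_i$ (rather than $G^{(-i)}e_i$) preserves the terminal factor $G_{\omega_C(a^*),i}$. Your explicit "masking invariant" and the treatment of degenerate components are just more detailed renderings of these same observations.
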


\begin{remark}
    Masked evaluations restrict indices to $([n]\setminus\{i\})$, so Hadamard steps use the masked diagonal $g^{(-i)}\coloneqq \diag(G^{(-i)})$ (cf. Definition \ref{def:component_fold}). 
    Note that the initial vector for the anchor component $P^{(-i)}G e_i \neq G^{(-i)} e_i = 0$.
\end{remark}

\begin{proof}
    Identical to the proof of Lemma~\ref{lem:restricted_fold_correct}, with the following two observations:
    (i) replacing $(G,g)$ by $(G^{(-i)},g^{(-i)})$ forbids the index $i$ from appearing in any contraction step, so summation is over $([n]\setminus\{i\})^C$;
    (ii) multiplying the initialization by $P^{(-i)}$ zeros the $i$‑th coordinate, which forbids the remaining free block from taking label $i$. 
    The terminal factor $G_{\omega_C(a^*),i}$ is preserved by initializing with $P^{(-i)}G e_i$ (not $G^{(-i)}e_i$).
\end{proof}

See Algorithm \ref{alg:fold-and-sum} for precise description of the procedure.

\begin{algorithm}[H]
    \caption{\textsc{FoldAndSum}$(\theta,\pi;\,X;\,i;\,\mathrm{mask})$ — evaluates $\Phi^{\all}_{\theta,\pi}(X;i)$
    (or $\Phi^{\all,(-i)}_{\theta,\pi}(X;i)$ if $\mathrm{mask}=\texttt{True}$)}
    \label{alg:fold-and-sum}
    \begin{algorithmic}[1]
        \Require    $\theta$, $\pi\in\Part(\cV_\theta)$, $X\in\RR^{n\times p}$, $i\in[n]$, $\mathrm{mask}\in\{\texttt{True},\texttt{False}\}$
        \Ensure A scalar equal to $\Phi^{\all}_{\theta,\pi}(X;i)$ if $\mathrm{mask}=\texttt{False}$, and to $\Phi^{\all,(-i)}_{\theta,\pi}(X;i)$ if $\mathrm{mask}=\texttt{True}$
        \State 
            Build $\cQ_{\theta,\pi}$ and its simple graph $\cQ^{\rm simp}_{\theta,\pi}$; let $\{C_1,\dots,C_K\}$ be the connected components.
        \If{$\mathrm{mask}=\texttt{True}$}
            \State  $P\gets I_n-e_ie_i^\top$;\quad $G^\star\gets PXX^\top P$
        \Else
            \State  $G^\star\gets XX^\top$
        \EndIf
        \State  $Z\gets 1$
        \For{$k=1$ \textbf{to} $K$}
            \If{$\chi=i$ \textbf{and} $a^*\in C_k$}
                \State $v_{\init}\gets \begin{cases}
                    P\,XX^{\top} e_i, & \mathrm{mask}=\texttt{True} \\
                    XX^{\top} e_i, & \mathrm{mask}=\texttt{False}
                \end{cases}$
            \Else
                \State $v_{\init}\gets \begin{cases}
                    P\,\bone_n, & \mathrm{mask}=\texttt{True} \\
                    \bone_n, & \mathrm{mask}=\texttt{False}
                \end{cases}$
            \EndIf
            \State  $Z\gets Z \times \CompFold{C_k}(\theta,\pi;~G^\star, v_{\init})$  \Comment{See Definitions~\ref{def:component_fold} \& \ref{def:mask} for $\CompFold{C_k}(\theta,\pi;~G^\star, v_{\init})$}
        \EndFor
        \State \Return $Z$
    \end{algorithmic}
\end{algorithm}

\subsubsection{Partition lattice and M\"{o}bius correction}
Next, we develop machinery to resolve over-counting in the all-assignment aggregation via M\"{o}bius inclusion-exclusion principle.
Recall the definition of a partition of a set and the blocks (Definition \ref{defn:partition}). 
The set of partitions of a set $\cT$ form a lattice $\bigl( \Part(\cT), \preceq \bigr)$ by the refinement order $\preceq$: for any set $\cT$ and any $\pi_1, \pi_2 \in \Part(\cT)$,
\[
    \pi_1 \preceq \pi_2
    \quad\text{if and only if}\quad
    \text{every block of }\pi_1\text{ is contained in a block of }\pi_2.
\]
Let $\disc_{\cT} \coloneqq \bigl\{ \{ z\}: z \in \cT \bigr\}$ be the discrete (finest) partition of $\cT$.

\begin{definition}[Coarsening of partitions]
    Let $\cT \subset \ZZ$. 
    For a partition $\pi \in \Part(\cT)$, for any $\rho \in \Part( [|\pi|] )$, the \emph{coarsening} (or composition) of $\pi$ by $\rho$ is the partition
    \[
        \rho \circ \pi \coloneqq \left\{ P'_b = \bigcup_{a \in Q_b} P_a: Q_b \in \rho \right\} \in \Part(\cT),
    \]
    enumerated by increasing block minima $\min P'_1 < \dots < \min P'_{|\rho|}$. 
    Letting $\tau_\rho:[|\pi|]\to[|\rho|]$ map $a\mapsto b$ if and only if $a\in Q_b$, the index map of $\rho\circ\pi$ is $\iota_{\rho\circ\pi}=\tau_\rho\circ\iota_\pi$.
\end{definition}

All sums depend only on block membership, not on block labels. 
Thus, the enumeration convention in coarsening only fixes labels and does not affect any of the sums.

Now we define the M\"{o}bius correction operator.
\begin{definition}[M\"{o}bius correction]\label{defn:mobius_correction}
    Let $\cT \subset \ZZ$. 
    The \emph{M\"{o}bius correction} of a functional $F: \Part(\cT) \to \RR^n$ is $\Mob[F]: \Part(\cT) \to \RR^n$ such that
    \[
        \Mob[F](\pi) = \sum_{ \rho \in \Part([\ell])} \lambda(\rho) \cdot F\bigl( \rho \circ \pi \bigr)
    \]
    where $\ell = |\pi|$ and
    \[
        \lambda(\rho) = (-1)^{\ell - |\rho|} \prod_{Q \in \rho} \bigl(|Q|-1 \bigr)!
    \]
    is the M\"{o}bius weight of $\rho \in \Part([\ell])$.
\end{definition}

Now we are ready to present the main theorem in this section. 

\begin{theorem}[M\"{o}bius reduction]\label{thm:mobius_reduction}
    Let $\theta$ be a word and $\pi \in \Part(\cV_{\theta})$. 
    For any $X \in \RR^{n \times p}$ and any $i \in [n]$,
    \begin{align}
        \Phi_{\pi,0}(X;i)
            &= \sum_{\rho\in\Part([\ell])}\ \lambda(\rho) \cdot \Phi^{\all,(-i)}_{\theta,\ \rho\circ\pi}(X;i),
                \label{eq:mob-class0}\\
        \Phi_{\pi,1}(X;i)
            &= \sum_{\rho\in\Part([\ell])}\ \lambda(\rho) \cdot \Big[\Phi^{\all}_{\theta,\ \rho\circ\pi}(X;i)\ -\ \Phi^{\all,(-i)}_{\theta,\ \rho\circ\pi}(X;i)\Big],
                \label{eq:mob-class1}
    \end{align}
    where $\ell = |\pi|$ and $\Phi^{\all,(-i)}_{\theta,\ \pi}$ is defined in \eqref{eqn:aggregate_excluding_i}.
\end{theorem}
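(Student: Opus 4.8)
The plan is to prove both identities \eqref{eq:mob-class0} and \eqref{eq:mob-class1} by a single M\"{o}bius‑inversion argument on the partition lattice $\bigl(\Part([\ell]),\preceq\bigr)$, after recognizing the all‑assignment aggregates as the zeta transform (along refinement) of the Class~0, resp.\ Class~1, injective aggregates. The first ingredient is a \emph{kernel‑grouping} bijection. Fix a word $\theta$, a partition $\pi'\in\Part(\cV_\theta)$ with $\ell'\coloneqq|\pi'|$ blocks, and a ground set $\cN\in\{[n],\,[n]\setminus\{i\}\}$. Every map $\omega\colon[\ell']\to\cN$ has a \emph{kernel} $\ker\omega\in\Part([\ell'])$, where $a\sim b\iff\omega(a)=\omega(b)$, and the maps with kernel exactly $\rho$ biject with injective maps $\bar\omega\colon[\,|\rho|\,]\to\cN$ via $\omega=\bar\omega\circ\tau_\rho$, with $\tau_\rho$ the block‑index map of $\rho$. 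The crucial point is that the design monomial is invariant under this reduction: since $\iota_{\rho\circ\pi'}=\tau_\rho\circ\iota_{\pi'}$ we have $\omega\circ\iota_{\pi'}=\bar\omega\circ\iota_{\rho\circ\pi'}$ as maps $\cV_\theta\to\cN$, so every edge factor of $\varphi_\omega(X;i)$ with respect to $(\theta,\pi')$ --- including the terminal factor $\langle x_{\omega(\iota_{\pi'}(L_\theta))},x_i\rangle^{\Ind\{\chi=i\}}$, whose $x_i$ appears regardless of $\cN$ --- equals the corresponding factor of $\varphi_{\bar\omega}(X;i)$ with respect to $(\theta,\rho\circ\pi')$. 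Summing over all $\omega\colon[\ell']\to\cN$ and grouping by kernel gives
\[
    \sum_{\omega\colon[\ell']\to\cN}\varphi_\omega(X;i)\;=\;\sum_{\rho\in\Part([\ell'])}\ \sum_{\bar\omega\colon[\,|\rho|\,]\hookrightarrow\cN}\varphi_{\bar\omega}(X;i).
\]
For $\cN=[n]\setminus\{i\}$ the inner sum is $\Phi_{\rho\circ\pi',0}(X;i)$; for $\cN=[n]$ it is $\Phi_{\rho\circ\pi',0}(X;i)+\Phi_{\rho\circ\pi',1}(X;i)$, since an injective map into $[n]$ either misses $i$ or hits it in exactly one block.

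Next I would promote this to a zeta transform on $\Part([\ell])$ and invert it. Applying the previous identity with $\pi'=\sigma\circ\pi$ (so $\ell'=|\sigma|$) for each $\sigma\in\Part([\ell])$, and using associativity of coarsening, $\tau\circ(\sigma\circ\pi)=(\tau\circ\sigma)\circ\pi$, together with the fact that $\tau\mapsto\tau\circ\sigma$ bijects $\Part([\,|\sigma|\,])$ onto $\{\sigma'\in\Part([\ell])\colon\sigma\preceq\sigma'\}$, one obtains
\[
    \Phi^{\all,(-i)}_{\theta,\sigma\circ\pi}(X;i)=\sum_{\sigma'\succeq\sigma}\Phi_{\sigma'\circ\pi,0}(X;i),
    \qquad
    \bigl(\Phi^{\all}_{\theta,\sigma\circ\pi}-\Phi^{\all,(-i)}_{\theta,\sigma\circ\pi}\bigr)(X;i)=\sum_{\sigma'\succeq\sigma}\Phi_{\sigma'\circ\pi,1}(X;i).
\]
Thus the two left‑hand sides are the zeta transforms of $\sigma\mapsto\Phi_{\sigma\circ\pi,0}$ and $\sigma\mapsto\Phi_{\sigma\circ\pi,1}$ over $(\Part([\ell]),\preceq)$. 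M\"{o}bius inversion, evaluated at the bottom element $\disc_{[\ell]}$ (for which $\disc_{[\ell]}\circ\pi=\pi$), gives $\Phi_{\pi,0}(X;i)=\sum_{\rho\in\Part([\ell])}\mu(\disc_{[\ell]},\rho)\,\Phi^{\all,(-i)}_{\theta,\rho\circ\pi}(X;i)$ and the analogue for $\Phi_{\pi,1}$. Finally, the classical value of the partition‑lattice M\"{o}bius function, $\mu(\disc_{[\ell]},\rho)=\prod_{Q\in\rho}(-1)^{|Q|-1}(|Q|-1)!=(-1)^{\ell-|\rho|}\prod_{Q\in\rho}(|Q|-1)!=\lambda(\rho)$, matches Definition~\ref{defn:mobius_correction}; substituting yields \eqref{eq:mob-class0} and \eqref{eq:mob-class1}.

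The main obstacle is the bookkeeping in the first step: verifying that collapsing the colliding blocks of $\omega$ reproduces \emph{exactly} the design monomial of $(\theta,\rho\circ\pi')$ --- in particular that the self‑ versus cross‑edge status of each edge of $\cG_\theta$, and the anchor/terminal factor when $\chi=i$, are inherited correctly under $\iota_{\rho\circ\pi'}=\tau_\rho\circ\iota_{\pi'}$ --- and checking that the block‑enumeration convention for $\rho\circ\pi'$ (ordering by increasing minima) is irrelevant because all aggregates depend only on block membership, not block labels. Once this is settled, the remaining ingredients (that $\tau\mapsto\tau\circ\sigma$ is the claimed bijection, associativity of coarsening, and the standard zeta/M\"{o}bius correspondence on $\Part([\ell])$ with the classical Möbius function of the partition lattice) are routine.
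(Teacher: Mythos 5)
Your proposal is correct and follows essentially the same route as the paper: grouping all-assignment maps by their collision (kernel) pattern to obtain the zeta-transform relation on $\bigl(\Part([\ell]),\preceq\bigr)$, then applying M\"{o}bius inversion with $\mu(\disc_{[\ell]},\rho)=\lambda(\rho)$, and obtaining the Class-1 identity from the difference between the unmasked and masked injective sums. The only (immaterial) difference is that you subtract the masked from the unmasked aggregates before inverting, whereas the paper inverts each relation and then subtracts, and you spell out the kernel-bijection and monomial-invariance bookkeeping that the paper merely asserts.
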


\begin{remark}[Algorithmic implication]
    For each coarsening $\rho\circ\pi$, computing two folds suffices: (i) an unmasked fold computes $\Phi^{\all}_{\theta,\rho\circ\pi}(X;i)$; (ii) a masked fold with $(G^{(-i)},g^{(-i)})$ computes $\Phi^{\all,(-i)}_{\theta,\rho\circ\pi}(X;i)$. Plugging into \eqref{eq:mob-class0}–\eqref{eq:mob-class1} with weights $\lambda(\rho)$ yields $\Phi_{\pi,0},\Phi_{\pi,1}$, which enter Theorem~\ref{thm:xi_formula}.
\end{remark}

\begin{proof}[Proof of Theorem \ref{thm:mobius_reduction}]
    For $\rho\in\Part([\ell])$, let $\mathsf{A}(\rho) \coloneqq \Phi^{\all}_{\theta,\ \rho\circ\pi}(X;i)$ be the all-assignment sum with blocks identified inside parts of $\rho$, and let $\mathsf{I}(\rho)$ be the same sum restricted to maps injective across the parts of $\rho$. 
    Grouping by the finest collision pattern gives the incidence relation on $(\Part([\ell]),\preceq)$:
    \[
        \mathsf{A}(\rho)=\sum_{\sigma\preceq\rho}\mathsf{I}(\sigma),    \qquad \rho \in \Part([\ell]).
    \]
    Recall $\disc_{[\ell]}=\{\{1\},\dots,\{\ell\}\}$ denotes the discrete partition. 
    M\"{o}bius inversion on $(\Part([\ell]),\preceq)$ yields 
    \[
    \mathsf{I}(\disc_{[\ell]})=\sum_{\rho\in\Part([\ell])}\lambda(\rho)\,\mathsf{A}(\rho).
    \]
    Evaluating $\mathsf{I}(\disc_{[\ell]})$ over $[n]$ gives the injective aggregate over all labels, that is,
    \begin{equation}\label{eq:mob-all}
        \Phi_{\pi}(X;i) 
            \coloneqq \sum_{\omega \in \Omega(\theta, \pi) }\varphi_\omega(X;i)
            = \sum_{\rho\in\Part([\ell])}\lambda(\rho)\cdot \Phi^{\all}_{\theta,\ \rho\circ\pi}(X;i).
    \end{equation}
    Similarly, evaluating $\mathsf{I}(\disc_{[\ell]})$ with the masked fold \((G^{(-i)},g^{(-i)})\), cf. Definition~\ref{def:mask}, gives the injective aggregate over $[n]\setminus\{i\}$: 
    \[
        \Phi_{\pi,0}(X;i)=\sum_{\rho\in\Part([\ell])}\lambda(\rho) \cdot\Phi^{\all,(-i)}_{\theta,\ \rho\circ\pi}(X;i).
    \]
    This proves \eqref{eq:mob-class0}.

    Finally, under injectivity across blocks, each label can appear at most once; therefore, the injective sum over all labels minus the injective sum that excludes $i$ isolates the case where $i$ appears exactly once. 
    Therefore, $\Phi_{\pi,1}(X;i)=\Phi_{\pi}(X;i)-\Phi_{\pi,0}(X;i)$. 
    Substituting the two inversion formulas from \eqref{eq:mob-class0} and \eqref{eq:mob-all} yields \eqref{eq:mob-class1}.

\end{proof}

\subsection{Algorithm description}\label{sec:folding_algorithms}
\subsubsection{Folding-M\"{o}bius algorithm}\label{sec:folding_mobius}

\begin{algorithm}[H]
    \caption{Folding–M\"{o}bius algorithm to compute $\xi^{[d]}_i(m;X)$ (for fixed $i \in [n]$).}
    \label{alg:folding_mobius}
    \begin{algorithmic}[1]
        \Require $X\in\RR^{n\times p}$, subset size $m \in [n]$, degree $d \in \ZZnn$, index $i \in [n]$
        \Ensure $\xi^{[d]}_i(m;X)\in\RR$
        \State $\xi\gets 0$;\quad $G\gets XX^\top$;\quad $g\gets\diag(G)$
        \ForAll{$\theta=(\phi,\chi)\in\{I,U,V\}^d\times\{i,\avg\}$}
            \State 
                Build $\cG_\theta=(\cV_\theta,\cE_\theta)$; enumerate edges $(s_a,t_a)_{a=1}^{m_\theta}$
            \ForAll{$\pi\in\Part(\cV_\theta)$}
                \State 
                    $\Phi_{\pi,0}\gets 0$; $\Phi_{\pi,1}\gets 0$; $\ell \gets |\pi|$
                \ForAll{$\rho\in\Part([\ell])$}
                    \State $\widetilde{\Phi} \gets$ \textsc{FoldAndSum}$(\theta,\rho\circ\pi;~X; ~i; ~\texttt{False})$
                    \State $\widetilde{\Phi}^{(-i)}\gets$ \textsc{FoldAndSum}$(\theta,\rho\circ\pi;~X; ~i; ~\texttt{True})$
                    \State $\Phi_{\pi,0}\gets \Phi_{\pi,0}+\lambda(\rho)\cdot \widetilde{\Phi}^{(-i)}$
                    \State $\Phi_{\pi,1}\gets \Phi_{\pi,1}+\lambda(\rho)\cdot (\widetilde{\Phi} - \widetilde{\Phi}^{(-i)})$
                \EndFor
                \State $\xi\gets \xi+\dfrac{\mathrm{sgn}_\theta}{m^{L_\theta}}\Big(\psi_0(\pi)\,\Phi_{\pi,0}+\psi_1(\pi)\,\Phi_{\pi,1}\Big)$
            \EndFor
        \EndFor
        \State \Return $\xi$
    \end{algorithmic}
\end{algorithm}

\begin{enumerate}
    \item 
    \emph{Edges.} Component folds (Definition~\ref{def:component_fold}) use the induced edge lists $\cE_\theta[C]$; only whether an edge is self vs.\ cross \emph{within each component of} $\cQ_{\theta,\pi}^{\rm simp}$ matters.
    \item 
    \emph{Fold evaluation.} \textsc{FoldAndSum}(\,$\theta,\rho\circ\pi;X,i$\,) multiplies the per‑component folds (Algorithm~\ref{alg:fold-and-sum}); Lemmas~\ref{lem:restricted_fold_correct} and \ref{lem:masked_fold_correct} justify the unmasked/masked cases.
    See Algorithm \ref{alg:fold-and-sum} for a complete pseudocode.
    \item 
    \emph{M\"{o}bius correction.} For each coarsening $\rho\circ\pi$, combine unmasked/masked values with weight $\lambda(\rho)$ to obtain the injective aggregates $\Phi_{\pi,0},\Phi_{\pi,1}$ via \eqref{eq:mob-class0}–\eqref{eq:mob-class1}. 
    \item 
    \emph{Final accumulation.} Substitute $\Phi_{\pi,0}$ and $\Phi_{\pi,1}$ into Theorem~\ref{thm:xi_formula} with prefactor $\mathrm{sgn}_\theta/m^{L_\theta}$ and SRSWOR weights $\psi_0(\pi)=\frac{(m-1)_\ell}{(n-1)_\ell}$, $\psi_1(\pi)=\frac{(m-1)_{\ell-1}}{(n-1)_{\ell-1}}$.
\end{enumerate}

\begin{remark}[Complexity]
    Fix a word $\theta$ and a partition $\pi$ with $\ell=|\pi|$ and total edge count $m_\theta$. For each coarsening $\rho\in\Part([\ell])$, \textsc{FoldAndSum} evaluates two products of component folds (masked and unmasked). 
   If a component $C$ has $m_C^{\rm cross}$ cross‑edges and $m_C^{\rm self}$ self‑edges, one fold costs 
    \[
        O \left(m_C^{\rm cross}\cdot (np) + m_C^{\rm self}\cdot n\right)
    \]
    using $Gv=X(X^\top v)$. 
    Summing over components, each masked or unmasked fold per each coarsening costs $O \left(m_\theta\,np + m_\theta\,n\right)$, i.e., $O(m_\theta\,np)$. 
    Summing over the $B_\ell$ (Bell number) coarsenings and doubling for masked/unmasked gives a total of $O \big(B_\ell\,m_\theta\,np\big)$ per word–partition pair $(\theta,\pi)$ when $Gv$ is implemented as $X(X^\top v)$; memory requirement is $O(np)$ for $X$ and $O(n)$ for working vectors (no $n^2$ storage of $G$ is required).
\end{remark}

\begin{remark}[Comparison vs.\ brute-force enumeration]
    Brute-force injective summation over $\Omega(\theta,\pi)$ costs $n_{(\ell)}=n(n-1)\cdots(n-\ell+1)$ terms. The Folding--M\"{o}bius evaluation replaces this by: (\emph{a}) $B_\ell$ coarsenings (Bell number; depends only on $\ell\le L_\theta\le 2d+2$), and (\emph{b}) for each coarsening, two folds of length $m_\theta$; each fold costs $m_\theta$ applications of $v\mapsto Gv=X(X^\top v)$ (i.e., $O(np)$) or $v\mapsto g\odot v$ (i.e., $O(n)$). Thus the cost is independent of $n$ combinatorially and typically far smaller than $n_{(\ell)}$ for moderate $d$ and large $n$.
\end{remark}

\subsubsection{Scalar Folding–M\"{o}bius algorithm}\label{sec:scalar_folding_mobius}

\begin{algorithm}[ht!]
    \caption{Scalar Folding–M\"{o}bius algorithm for $\bbE[\sfR^{[d]}_{\arm}]$.}
    \label{alg:scalar_folding_mobius}
    \begin{algorithmic}[1]
        \Require $X\in\RR^{n\times p}$, subset size $m \in [n]$, degree $d \in \ZZnn$, residual $r^{(\arm)}\in\RR^n$
        \Ensure $\bbE[\sfR^{[d]}_{\arm}]$
        \State $S \gets 0$;\quad $G\gets XX^\top$
        \ForAll{$\theta=(\phi,\chi)\in\{I,U,V\}^d\times\{i,\avg\}$}
            \ForAll{$\pi\in\Part(\cV_\theta)$}
                \State $S_0\gets 0$; $S_1\gets 0$; $\ell\gets|\pi|$
                \ForAll{$\rho\in\Part([\ell])$}
                    \State Build $\cQ_{\theta,\rho\circ\pi}$ and components $\{C_1,\dots,C_K\}$; set $G^\star\gets G$
                    \State \emph{// Unmasked part $R_{\rm all}$}
                    \If{$\chi=i$}
                        \State $\text{const}\gets 1$
                        \For{$k=1$ \textbf{to} $K$}
                            \If{$a^*\notin C_k$} \ $\text{const}\gets \text{const}\times \CompFold{C_k}(\theta,\rho\circ\pi;\,G^\star,\,\bone_n)$
                            \Else \ $\text{anchor}\gets k$
                            \EndIf
                        \EndFor
                        \State $R_{\rm all}\gets \text{const}\times \CompFold{C_{\text{anchor}}}(\theta,\rho\circ\pi;\,G^\star,\,X(X^\top r^{(\arm)}))$
                    \Else
                        \State $R_{\rm all}\gets 0$ \Comment{Because $\sum_i r^{(\arm)}_i\,\Phi^{\all}_{\theta,\rho\circ\pi}(X;i)=\Phi^{\all}\,\bone^\top r^{(\arm)}=0$ by \eqref{eqn:population_OLS}}
                    \EndIf
                    \State \emph{// Masked part $R_{\rm ex}$}
                    
                    \State $R_{\rm ex}\gets 0$
                    \For{$i=1$ \textbf{to} $n$}
                        \State $P\gets I_n-e_ie_i^\top$
                        \State $\text{const}\gets 1$
                        \For{$k=1$ \textbf{to} $K$}
                            \If{$\chi=i$ \textbf{and} $a^*\in C_k$}
                                \State $v_{\init}\gets P\,G e_i$ \Comment{Preserves the anchor factor $G_{\omega(a^*),i}$}
                            \Else
                                \State $v_{\init}\gets P\,\bone_n$
                            \EndIf
                            \State $G^\star\gets PGP$ 
                            \State $\text{const}\gets \text{const}\times \CompFold{C_k}(\theta,\rho\circ\pi;\,G^\star,\, v_{\init})$
                        \EndFor
                        \State $R_{\rm ex}\gets R_{\rm ex}+ r^{(\arm)}_i\,\text{const}$
                    \EndFor
                    \State $S_0\gets S_0+\lambda(\rho)\,R_{\rm ex}$;\quad $S_1\gets S_1+\lambda(\rho)\,(R_{\rm all}-R_{\rm ex})$
                \EndFor
                \State $S \gets S +\dfrac{\mathrm{sgn}_\theta}{m^{L_\theta}}\Big(\psi_0(\pi)\,S_0+\psi_1(\pi)\,S_1\Big)$
            \EndFor
        \EndFor
        \State \Return $\frac{1}{n}\,S$
    \end{algorithmic}
\end{algorithm}

The scalar routine (Algorithm \ref{alg:scalar_folding_mobius}) evaluates $\frac{1}{n}\sum_i \xi^{[d]}_i r^{(\arm)}_i$ without forming $\xi^{[d]}$.
\begin{itemize}
    \item 
    \emph{Unmasked part.} Only the anchor component (when $\chi=i$) depends on $i$. Linearity yields
    \[
        \sum_{i=1}^n r^{(\arm)}_i\cdot \CompFold{C^*}(\cdot;\,G,\,G e_i)
         =  \CompFold{C^*}(\cdot;\,G,\,X(X^\top r^{(\arm)})).
    \]
    which is multiplied by the product of non‑anchor components evaluated at $\bone_n$. 
    When $\chi=\avg$, $\Phi^{\all}_{\theta,\rho\circ\pi}(X;i)$ is independent of $i$, so $\sum_i r^{(\arm)}_i\,\Phi^{\all}_{\theta,\rho\circ\pi}(X;i)=\Phi^{\all}_{\theta,\rho\circ\pi}(X)\,\bone^\top r^{(\arm)}=0$ by the population normal equations; cf.\ \eqref{eqn:population_OLS}.

    \item 
    \emph{Masked part.} The mask depends on $i$, so $R_{\rm ex}$ necessarily uses an $i$‑loop, with per‑component initializations $P\,G e_i$ (anchor) or $P\bone_n$ (non‑anchor). 
    Each masked cross‑edge contraction (Lines 31--32) is applied as $v \leftarrow P^{(-i)} \big(G(P^{(-i)} v)\big)$, which has the same $O(np)$ cost as an unmasked $Gv$ when implemented via $Gv=X(X^\top v)$, and avoids forming $P^{(-i)} G P^{(-i)}$ (which would require $O(n^2)$ memory).
\end{itemize}

\paragraph{Why the scalar routine can be preferable.}
The vector routine forms $\xi^{[d]}\in\mathbb{R}^n$ by evaluating all (coarsening, component) folds for every $i$. 
In contrast, the scalar routine pushes $\frac{1}{n}\langle \xi^{[d]},r^{(\arm)}\rangle$ inside the loops and uses linearity to replace $\sum_i r^{(\arm)}_i\CompFold{C^*}(\cdot;\,G,Ge_i)$ by a \emph{single} anchor fold with initialization $X(X^\top r^{(\arm)})$. 
Thus the unmasked part avoids the $i$-loop entirely, and only the masked part retains the $i$-loop due to the $i$-dependent mask. When $n$ is large and the number of masked coarsening–component folds is moderate (i.e., for moderate $d$), this reduces
both memory (never storing $\xi^{[d]}$) and time (fewer matrix–vector products).

\begin{remark}[Complexity comparison]
    Let $T_{\theta,\pi}\coloneqq B_{|\pi|}\cdot m_\theta$ denote the number of folds (coarsenings times edges) for a fixed $(\theta,\pi)$. 
    The vector routine computes, for each $i$, \emph{both} an unmasked and a masked fold per coarsening: its cost is 
    \[
        O \big(n\cdot T_{\theta,\pi}\cdot (np)\big).
    \]
    The scalar routine eliminates the entire $i$‑loop for the unmasked part: its cost is
    \[
        O \big(T_{\theta,\pi}\cdot (np)\big) + O \big(n\cdot T_{\theta,\pi}\cdot (np)\big)
    \]
    (masked part unchanged). 
    Therefore, for those words with $\chi=i$ the scalar routine halves the unmasked work; for $\chi=\avg$ it eliminates it entirely, yielding a larger reduction. 
    In all cases, the scalar routine requires only $O(n)$ memory since $\xi^{[d]}$ is never materialized.
\end{remark}

\section{Deferred proof of Theorem \ref{thm:main-OLS} in Section \ref{sec:theory-OLS}}\label{sec:proof_main_theorem}

\subsection{Helper lemma: Row-swap coupling and Poincar\'{e} bound}\label{sec:Poincare_Johnson}

Let $\cS\subset[n]$ be a random subset of fixed size $m$ drawn by simple random sampling without replacement (SRSWOR), i.e., $\cS$ is drawn uniformly at random from $\binom{[n]}{m}$.
For any set $\cS \in \binom{[n]}{m}$ and any ordered pair $(j,k) \in \cS \times \cS^c$, where $\cS^c \coloneqq [n] \setminus \cS$, define the $(j,k)$-swapped set of $\cS$:
\[
    \cS^{(j\leftrightarrow k)} \coloneqq \big( \cS\setminus\{j\} \big) \cup \{k\},
\]
Given a function $F: \binom{[n]}{m} \to\RR$, define its \emph{one-swap increment} at $(\cS;~j,k)$ as
\begin{equation}\label{eqn:swap_operator}
    \DeltaJK F(\cS) \coloneqq F\bigl(\cS^{(j\leftrightarrow k)}\bigr) - F(\cS).
\end{equation}
Unless otherwise stated, expectations in the rest of this subsection are taken with respect to the joint law where
\[
    \cS\sim\Unif\big(\textstyle\binom{[n]}{m}\big),
    \qquad
    J\sim\Unif(\cS),
    \qquad 
    K\sim\Unif(\cS^c),
\]
with $J,K$ sampled conditionally independently given $\cS$.

\begin{definition}\label{defn:s2_functional}
    Let $n \in \ZZnn$ and $m \in [n]$. 
    The \emph{average-swap sensitivity} of $F: \binom{[n]}{m} \to \RR$ is
    \begin{equation}\label{eqn:avg_swap_sensitivity}
    \begin{aligned}
        \mathsf{s}_2(F) 
            &\coloneqq \Bigl\{ \, \bbE\Bigl[ \bigl(\DeltaJK F(\cS)\bigr)^2 \Bigr]\, \Bigr\}^{1/2}\\
            &= \Bigl\{ \, \bbE\Bigl[ \bigl(\, F\bigl(\cS^{(j\leftrightarrow k)}\bigr) - F(\cS) \,\bigr)^2 \Bigr]\, \Bigr\}^{1/2}.
    \end{aligned}
    \end{equation}
\end{definition}
This quantity is exactly the root–mean–square one–swap increment that appears on the right–hand side of the variance bound to be presented below; we use it only as a notational convenience.
Below we state a variance bound as a function of $\mathsf{s}_2(F)$ obtained via the Poincar\'{e} (spectral gap) inequality \citep[Chapter 13]{levin2017markov} specialized to the simple random walk on the Johnson graph \citep[Chapter 12]{brouwer2011spectra}.
\begin{lemma}[Variance bound under SRSWOR via swaps]\label{lem:ES-swap}
    For every $F:\binom{[n]}{m}\to\RR$,
    \begin{equation}\label{eq:ES-swap}
    \begin{aligned}
        \Var\big(F(\cS)\big)
            &\leq \frac{m(n-m)}{2n} \cdot \bbE\Bigl[\Bigl(\, F(\cS^{(J\leftrightarrow K)}) - F(\cS) \,\Bigr)^2\Big]\\
            &= \frac{m(n-m)}{2n} \cdot \mathsf{s}_2(F)^2.
    \end{aligned}
    \end{equation}
\end{lemma}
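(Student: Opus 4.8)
The plan is to recognize $\mathsf{s}_2(F)^2$ as twice the Dirichlet form of the simple random walk on the Johnson graph $J(n,m)$, and then apply the Poincar\'{e} (spectral-gap) inequality together with the known spectrum of that graph. I may assume $1\le m\le n-1$, since for $m\in\{0,n\}$ both sides of \eqref{eq:ES-swap} vanish.

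First I would set up the chain. Let $P$ be the transition matrix of the simple random walk on $J(n,m)$, whose vertex set is $\binom{[n]}{m}$ and whose edges join $\cS$ and $\cS'$ precisely when $\cS'=\cS^{(j\leftrightarrow k)}$ for some $(j,k)\in\cS\times\cS^c$. This graph is regular of degree $m(n-m)$, so $P(\cS,\cS')=\tfrac{1}{m(n-m)}$ along edges, the chain is reversible, and its stationary law is $\pi=\Unif\big(\binom{[n]}{m}\big)$. Under the sampling law in the lemma ($\cS\sim\pi$, then $J\sim\Unif(\cS)$ and $K\sim\Unif(\cS^c)$ conditionally independently given $\cS$), the pair $(\cS,\cS^{(J\leftrightarrow K)})$ is a uniformly drawn ordered edge of $J(n,m)$, because $(\cS,(j,k))\mapsto(\cS,\cS^{(j\leftrightarrow k)})$ is a bijection onto ordered edges. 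Writing $\mathcal{E}(F,F)=\tfrac12\sum_{\cS}\pi(\cS)\sum_{\cS'}P(\cS,\cS')\bigl(F(\cS)-F(\cS')\bigr)^2$ for the Dirichlet form, a direct rewriting then gives
\[
    \mathsf{s}_2(F)^2 \;=\; \bbE\bigl[(\DeltaJK F(\cS))^2\bigr] \;=\; 2\,\mathcal{E}(F,F).
\]

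Next I would pin down the spectral gap. The Johnson graph $J(n,m)$ is distance-regular with adjacency eigenvalues $(m-j)(n-m-j)-j$ for $j=0,1,\dots,\min\{m,n-m\}$ (see \citet{brouwer2011spectra}); the largest is $m(n-m)$ and the second largest is $(m-1)(n-m-1)-1$. Passing to $P$ (divide by $m(n-m)$) and using the identity $(m-1)(n-m-1)=m(n-m)-n+1$, the spectral gap is
\[
    \gamma \;=\; 1-\frac{(m-1)(n-m-1)-1}{m(n-m)} \;=\; \frac{m(n-m)-(m-1)(n-m-1)+1}{m(n-m)} \;=\; \frac{n}{m(n-m)},
\]
with the same formula valid in the boundary case $\min\{m,n-m\}=1$ since $J(n,1)\cong K_n$. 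The Poincar\'{e} inequality for reversible chains \citep[Ch.~13]{levin2017markov} gives $\Var_\pi(F)\le\gamma^{-1}\mathcal{E}(F,F)$, and substituting $\gamma=n/(m(n-m))$ and $\mathcal{E}(F,F)=\tfrac12\mathsf{s}_2(F)^2$ yields $\Var\bigl(F(\cS)\bigr)\le\frac{m(n-m)}{2n}\mathsf{s}_2(F)^2$, which is \eqref{eq:ES-swap}.

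The one genuinely delicate point is getting the second-largest adjacency eigenvalue of $J(n,m)$ correct and verifying the algebraic collapse of the numerator to exactly $n$; everything else is bookkeeping. I would also be careful to invoke the version of the Poincar\'{e} bound driven by $1-\lambda_2$ (the spectral gap) rather than the absolute spectral gap $1-\max\{\lambda_2,|\lambda_{\min}|\}$, since it is the former that controls the variance, and the possibly negative eigenvalues of the non-lazy Johnson walk are therefore harmless here.
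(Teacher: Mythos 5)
Your proof is correct and follows essentially the same route as the paper: identify $\mathsf{s}_2(F)^2$ with twice the Dirichlet form of the one-swap walk on the Johnson graph, use its known adjacency spectrum to get the spectral gap $n/(m(n-m))$, and conclude via the Poincar\'e inequality for reversible chains. Your second-largest eigenvalue $(m-1)(n-m-1)-1$ agrees with the paper's $\theta_1=m(n-m)-n$, so the two computations coincide.
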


\begin{proof}[Proof of Lemma~\ref{lem:ES-swap}]
    Let $J(n,m) = \bigl( \cV_{n,m}, \cE_{n,m} \bigr)$ be the Johnson graph, i.e., an undirected graph such that 
    $\cV_{n,m} = \bigl\{ \cS \subseteq [n]: |\cS| = m  \bigr\}$ and $\cE_{n,m} = \bigl\{ \{ v_1, v_2\} \in \cV_{n,m} \times \cV_{n,m}: |v_1 \cap v_2| = m-1  \bigr\}$.
    
    Consider the Markov chain on $J(n,m)$ with state space $\cV_{n,m} = \binom{[n]}{m}$ that moves from a state $\cS$ to a neighbor $\cS' \in \cV_{n,m}$ chosen uniformly at random by swapping one $J\in\cS$ with one $K\notin\cS$, i.e.,
    \[
        P( \cS,\cS' ) = 
            \begin{cases}
                \frac{1}{m(n-m)}    & \text{if }|\cS \cap \cS'| = m-1,\\
                0   & \text{otherwise}.
            \end{cases}
    \]
    
    This chain is irreducible, and it is aperiodic for all $n \geq 3$ and $m \in [n-1]$ because $J(n,m)$ contains 3-cycles. 
    Hence it has a unique stationary measure, which is the uniform measure $\mu_{\mathrm{uni}}$ on $\cV_{n,m}$. 
    In addition, the chain is reversible with respect to $\mu_{\mathrm{uni}}$ because
    \[
        \mu_{\mathrm{uni}}(\cS) \cdot P(\cS, \cS') = \mu_{\mathrm{uni}}(\cS') \cdot P(\cS', \cS), \qquad \text{for all adjacent }(\cS, \cS').
    \]

    Let $\langle f,g\rangle_{\mu_{\uni}} \coloneqq \sum_{\cS} f(\cS) \cdot g(\cS) \cdot\mu_{\uni}(\cS)$.
    Now that $P$ is a reversible transition matrix with stationary distribution $\mu_{\uni}$, the (symmetric) Dirichlet form associated with $(P, \mu_{\uni})$ is defined as follows: for any $f,g:\cV_{n,m}\to\RR$, 
    \[
        Q(f,g) \coloneqq \langle f, Lg\rangle_{\mu_{\uni}}
            = \frac{1}{2}\sum_{\cS,\cS'} \mu_{\uni}(\cS)\,P(\cS,\cS')\cdot \bigl[\,f(\cS)-f(\cS')\,\bigr]\,\bigl[\,g(\cS)-g(\cS')\,\bigr],
    \]
    where $L = I - P$ is the discrete-time generator. 
    Using the neighbor structure in $J(n,m)$, we obtain
    \begin{align*}
        Q(f,g)
            &= \frac{1}{2\,\binom{n}{m}} \sum_{\cS \in \cV_{n,m}} \sum_{j \in \cS} \sum_{k \in\cS^c}
                \frac{1}{m(n-m)}\, \bigl[\,f(\cS)-f(\cS^{(j\leftrightarrow k)})\, \bigr]\, \bigl[\,g(\cS)-g(\cS^{(j\leftrightarrow k)})\, \bigr]\\
            &= \frac{1}{2} \cdot \bbE_{\cS\sim\mu_{\uni}} \biggl[ \, \bbE_{J\sim \Unif(\cS),\,K\sim \Unif(\cS^c)}
                \Bigl[\,\bigl(\,f(\cS)-f(\cS^{(J\leftrightarrow K)} )\, \bigr)\, \bigl(\,g(\cS)-g(\cS^{(J\leftrightarrow K)})\,\bigr) \, \Bigm|\, \cS \, \Bigr] \, \biggr].
    \end{align*}

    In particular,
    \[
        Q(f,f)
        = \frac{1}{2} \bbE_{\cS\sim \mu_{\mathrm{uni}} }\, \bbE_{\cS'} \Big[ \big(f(\cS')-f(\cS)\big)^2 \Bigm|\ \cS\ \Big],
    \]
    where $\cS'$ is a one-step neighbor of $\cS$ drawn as in the chain, and under $\mu_{\uni}$, $(\cS',\cS)$ has the same distribution as $(\cS^{(J\leftrightarrow K)},\cS)$ with $J\sim\Unif(\cS)$ and $K\sim\Unif(\cS^c)$.
    The standard Poincar\'e inequality for reversible chains \citep[Remark 13.8]{levin2017markov} yields that for any $F$ with $\bbE_{\mu_{\uni}}[F]=0$,
    \begin{equation}\label{eqn:poincare}
        \Var_{\mu_{\uni}}(F) \leq \frac{1}{\lambda_{\mathrm{gap}}} Q(F,F),
    \end{equation}
    where $\lambda_{\mathrm{gap}}$ is the spectral gap, i.e., the difference between the largest eigenvalue (=1) and the second largest eigenvalue of $P$. 
    
    It remains to identify $\lambda_{\mathrm{gap}}$. 
    Recall that our Markov chain is built on the Johnson graph $J(n,m) = (\cV_{n,m}, \cE_{n,m})$, which is a regular graph of degree $\degree = m(n-m)$. 
    Note that the transition matrix $P = A / \degree$ where $A$ is the adjacency matrix of $J(n,m)$. 
    It is well known that the Johnson graph has $m+1$ distinct adjacency eigenvalues, which are given in the descending order by 
    \[
        \theta_j = E_1(j) = (m-j) (n-m-j) - j, 
        \qquad
        \text{with multiplicity } m_j = \binom{n}{j} - \binom{n}{j-1},
        \qquad j=0, 1, \dots, m,
    \]
    where the Eberlein polynomial $E_r(j) = \sum_{t=0}^r (-1)^t \binom{j}{t} \binom{m-j}{r-t} \binom{n-m-j}{r-t}$ \citep{delsarte1973algebraic}; see also \citet[Chapter 12.3.2]{brouwer2011spectra}. 
    In particular, the two largest eigenvalues of $A$ are
    \[
        \theta_0 = m (n-m),
        \qquad
        \theta_1 = m(n-m) - n,
    \]
    where $\theta_0$ has multiplicity $1$ and $\theta_1$ has multiplicity $n-1$. 
    Since $P = A / \degree$ with $\degree = m(n-m)$, the first two largest eigenvalues of $P$ are
    \[
        \lambda_1(P) = 1, 
        \qquad
        \lambda_2(P) = 1 - \frac{n}{m(n-m)}.
    \]
    Thus, $\lambda_{\mathrm{gap}} = \frac{n}{m(n-m)}$.
    
    Combining the spectral gap $\lambda_{\mathrm{gap}} = \frac{n}{m(n-m)}$ with \eqref{eqn:poincare} gives
    \[
        \Var(F) \leq \frac{m(n-m)}{n}\ Q(F,F)
        =\frac{m(n-m)}{2n}\ \bbE\Big[\,\big(F(\cS')-F(\cS)\big)^2\,\Big],
    \]
    where $(\cS,\cS')$ are as described above.
    This completes the proof.
\end{proof}

\begin{remark}
    In the proof of Theorem~\ref{thm:main-OLS} to follow in Appendix~\ref{sec:proof_main_theorem}, we apply Lemma~\ref{lem:ES-swap} to the degree–$d'$ arm–wise correction–error functional $F_{\arm,d'}(\cS_\arm)$, with $\arm\in\{0,1\}$ and $0\leq d'\leq d$. 
    We will (i) bound $\mathsf{s}_2(F_{\arm,d'})$ by an explicit, design–based row–swap analysis, and (ii) invoke \eqref{eq:ES-swap} to convert this into a variance (hence $L_2$ and $L_1$) control for the correction–estimation error at degree $d'$.
\end{remark}

\subsection{Completing the proof of Theorem \ref{thm:main-OLS}}\label{sec:proof_main_theorem_completing}

\begin{proof}[Proof of Theorem \ref{thm:main-OLS}]

    We organize this proof in four steps.

    \paragraph{Step 1. Master decomposition and Neumann convergence.}
    Recall from \eqref{eq:NPm-def-final} that 
    \[
        \htauols^{[d]} \coloneqq \hat\tau_{\mathrm{OLS}} + \sum_{d'=0}^{d} \left( \whsfR_1^{[d']}-\whsfR_0^{[d']} \right),
    \]
    and from \eqref{eq:master-decomp} that $\hat\tau_{\mathrm{OLS}}-\tau = \hresdim - \bigl(\sfR_1-\sfR_0\bigr)$, where
    \[
        \hresdim = \frac{1}{n_1}\bone_{n_1}^\top S_1 r^{(1)}-\frac{1}{n_0}\bone_{n_0}^\top S_0 r^{(0)}.
    \]
    Because we assumed Assumption~\ref{assumption:rates} holds with $\alpha_{n,p}(\delta) < 1$, on the ``good'' event $\Omega_\delta$, $\max_{\arm\in\{0,1\}}\ \|\Delta_{\arm}\| \leq \alpha_{n,p}(\delta) < 1$, and therefore, the Neumann series $(I_p - \Delta_{\arm})^{-1} = \sum_{d'=0}^\infty \Delta_{\arm}^{d'}$ converges, cf. \eqref{eqn:R-approx}, and 
    \[
        \sfR_{\arm} = \sum_{d' = 0}^{\infty} \sfR_{\arm}^{[d']}
        \qquad\text{where}\qquad
        \sfR_{\arm}^{[d']} = \xbara^\top  \, \Delta_{\arm}^{d'} \, u_{\arm}.
    \]
    Defining
    \[
        \sfT^{[d]}_{\arm} \coloneqq \sum_{d' \geq d+1} \sfR^{[d']}_{\arm},
    \]
    the Neumann tail truncated at degree $d$, we may write $\sfR_{\arm} = \sum_{d' = 0}^{d} \sfR_{\arm}^{[d']} + \sfT^{[d]}_{\arm}$.
    Then it follows that
    \begin{equation}\label{eq:masterC}
        \htauols^{[d]}-\tau
            =\hresdim 
                +\underbrace{\sum_{d'=0}^{d} \Big[(\widehat{\sfR}^{[d']}_1-\sfR^{[d']}_1)-(\widehat{\sfR}^{[d']}_0-\sfR^{[d']}_0)\Big]}_{\text{Error in correction terms}}
                \underbrace{-\Bigl(\sfT^{[d]}_1-\sfT^{[d]}_0\Bigr)}_{\substack{\eqqcolon \Delta_{n,p}^{[d]}\\\text{Neumann tail}}}.
    \end{equation}
    In the rest of this proof, we show: (i) a geometric bound for the Neumann tail $\Delta_{n,p}^{[d]}$ using Assumption \ref{assumption:rates}, and (ii) an $o_P(n^{-1/2})$ bound for the stochastic error in correction terms using Lemma~\ref{lem:ES-swap}.

    \paragraph{Step 2. Geometric bound on the Neumann tail.} 
    On the event $\Omega_\delta$, for each arm $\arm \in \{0,1\}$, $\sfT^{[d]}_\arm=\sum_{d'\ge d+1}\bar x_\arm^\top \, \Delta_\arm^{d'} \, u_\arm$, and hence,
    \begin{equation}\label{eq:tailC}
        |\sfT^{[d]}_\arm|
            \leq \|\bar x_\arm\|\frac{\|\Delta_\arm\|^{d+1}}{1-\|\Delta_\arm\|}\,\|u_\arm\|
            \leq \frac{\alpha_{n,p}(\delta)^{\,d+1}}{1-\alpha_{n,p}(\delta)} \cdot \beta_{n,p}(\delta) \cdot \gamma_{n,p}(\delta).
    \end{equation}
    Therefore, we obtain the tail envelope claimed in \eqref{eq:two-term-envelope} by triangle inequality:
    \[
        \Delta_{n,p}^{[d]} \leq |\sfT^{[d]}_1| + |\sfT^{[d]}_0| 
            \leq 2 \cdot \frac{\alpha_{n,p}(\delta)^{\,d+1}}{1-\alpha_{n,p}(\delta)} \cdot \beta_{n,p}(\delta) \cdot \gamma_{n,p}(\delta)
            = \varepsilon_{n,p}^{[d]}(\delta).
    \]

    \paragraph{Step 3: Controlling stochastic error in correction terms.} 
    We organize this step in two sub-steps. 
    In Step 3-a, we analyze one-swap sensitivity of the error in correction terms. 
    In Step 3-b, we establish a variance upper bound by applying Lemma \ref{lem:ES-swap}. 
    In Step 3-c, we apply Chebyshev's inequality to obtain $o_P(n^{-1/2})$ error bound and conclude Step 3. 
    Throughout Steps 3–4, we use the symbol ``$\lesssim$'' to hide absolute constants; specifically, we write $A\lesssim B$ to mean $A\le C\,B$ for an absolute constant $C>0$ not depending on $n,p,d'$ or the sample realization.

    \begin{itemize}
        \item 
        \emph{Step 3-a) One-swap increment bound.} 
        For each $\arm\in\{0,1\}$ and each $d'\in\{0,1,\dots,d\}$, define
        \[
            F_{\arm,d'} \coloneqq \widehat{\sfR}^{[d']}_{\arm}-\sfR^{[d']}_{\arm}.            
        \]
        Viewing $F_{\arm,d'} = F_{\arm,d'}(\Sa)$ as a function of a random subset $\Sa \subset [n]$, we bound the average one-swap increment $\mathsf{s}_2(F_{\arm, d'})^2 = \bbE\Bigl[ \bigl(\DeltaJK F(\cS)\bigr)^2 \Bigr]$ for $J\sim\Unif(\cS_\arm)$ and $K\sim\Unif(\Sa^c)$ (conditionally independent given $\Sa$), where $\Delta^{(J\leftrightarrow K)}F_{\arm,d'} = F_{\arm,d'}(\cS_\arm^{(J\leftrightarrow K)})-F_{\arm,d'}(\cS_\arm)$; see \eqref{eqn:avg_swap_sensitivity} in Definition \ref{defn:s2_functional} and \eqref{eqn:swap_operator}. 

        Recall
        \[
            \bar x_\arm = \frac{1}{n_\arm}\sum_{i\in\cS_\arm}x_i,\qquad
            \Sigma_\arm = \frac{1}{n_\arm}\sum_{i\in\cS_\arm}(x_i-\bar x_\arm)(x_i-\bar x_\arm)^\top,\qquad
            u_\arm=\frac{1}{n_\arm}\sum_{i\in\cS_\arm}(x_i-\bar x_\arm)\,r^{(\arm)}_i
        \]
        Let $\cS'_\arm=\cS_\arm^{(J\leftrightarrow K)}$ and write $\Delta^{(J\leftrightarrow K)} Z \coloneqq Z(\cS'_\arm)-Z(\cS_\arm)$. 
        A direct calculation yields the deterministic increments
        \begin{align}
            \Delta^{(J\leftrightarrow K)}\bar x_\arm
                &= \frac{x_K-x_J}{n_\arm},\label{eq:dxbar}\\
            \Delta^{(J\leftrightarrow K)}\Sigma_\arm
                &= \frac{x_Kx_K^\top-x_Jx_J^\top}{n_\arm}\ -\ \big(\bar x'_\arm\bar x_\arm^{\prime\top}-\bar x_\arm\bar x_\arm^\top\big), \nonumber\\
            \Delta^{(J\leftrightarrow K)}u_\arm
                &= \frac{(x_K-\bar x'_\arm)\,r^{(\arm)}_K-(x_J-\bar x_\arm)\,r^{(\arm)}_J}{n_\arm}\ -\ \frac{\Delta^{(J\leftrightarrow K)}\bar x_\arm}{n_\arm}\sum_{i\in \cS_\arm\cap\cS'_\arm} r^{(\arm)}_i,   \nonumber
        \end{align}
        where $\bar x'_\arm \coloneqq \bar x_\arm(\cS'_\arm)$. 
        Therefore,
        \begin{align}
            \opnorm{\Delta^{(J\leftrightarrow K)}\Sigma_\arm}
                &\leq \frac{\|x_K\|^2+\|x_J\|^2}{n_\arm}
                    + \big(\|\bar x_\arm\|+\|\bar x'_\arm\|\big)\,\big\|\Delta^{(J\leftrightarrow K)}\bar x_\arm\big\|
                    \nonumber\\
                &\leq \frac{\|x_K\|^2+\|x_J\|^2}{n_\arm}
                    + \frac{\|\bar x_\arm\|+\|\bar x'_\arm\|}{n_\arm}\,\|x_K-x_J\|,\label{eq:dSigma}\\
            \big\|\Delta^{(J\leftrightarrow K)}u_\arm\big\|
                &\leq \frac{\|x_K-\bar x'_\arm\|\,|r^{(\arm)}_K|+\|x_J-\bar x_\arm\|\,|r^{(\arm)}_J|}{n_\arm}\ +\ \frac{\|x_K-x_J\|}{n_\arm^2}\ \big\|r^{(\arm)}\big\|_1.\label{eq:du}
        \end{align}

        Next, we establish increment bounds for $\sfR^{[d']}_{\arm}$ and $\whsfR^{[d']}_{\arm}$ separately.
        
        \begin{itemize}
            \item 
            \textbf{Increment of the population Neumann term $\sfR^{[d']}_{\arm}$.}
            Recall 
            \[
                \sfR^{[d']}_{\arm}=\bar x_\arm^\top\,\Delta_\arm^{\,d'}\,u_\arm 
                \qquad\text{where}\qquad
                \Delta_\arm=\Sigma_\arm-I_p.
            \]
            By the product rule and submultiplicativity,
            \begin{align}
                \big|\Delta^{(J\leftrightarrow K)}\sfR^{[d']}_\arm\big|
                    &\leq \big\|\Delta_\arm^{\,d'}u_\arm\big\|\,\big\|\Delta^{(J\leftrightarrow K)}\bar x_\arm\big\|
                    \ +\ d'\,\big\|\Delta_\arm^{\,d'-1}\big\|\,\|\bar x_\arm\|\,\|u_\arm\|\,\opnorm{\Delta^{(J\leftrightarrow K)}\Delta_\arm } \nonumber\\
                    &\qquad +\ \big\|\Delta_\arm^{\,d'}\bar x_\arm\big\|\,\big\|\Delta^{(J\leftrightarrow K)}u_\arm\big\|.\label{eq:dR-pop}
            \end{align}
            With the normalization $X^\top X=nI_p$ and $\bone_n^\top X=0$, $\tfrac{1}{n}\sum_i\|x_i\|^2=p$ and thus, $\bbE \left[\|x_J-x_K\|^2\mid \cS_\arm\right]\le 4p$. 
            Using the identities in \eqref{eq:dxbar}–\eqref{eq:du}, we obtain the conditional averaged square increment bound
            \begin{align}
                \bbE \left[\big(\Delta^{(J\leftrightarrow K)}\sfR^{[d']}_\arm\big)^2 \,\Bigm|\, \cS_\arm\right]
                    & \lesssim\
                    \frac{p}{n_\arm^2}\Big(\|\Delta_\arm\|^{2d'}\|u_\arm\|^2
                    \ +\ d'^2\,\|\Delta_\arm\|^{2d'-2}\|\bar x_\arm\|^2\|u_\arm\|^2\Big) \nonumber\\
                    &\qquad +\ \frac{p}{n_\arm^3}\,\|\Delta_\arm\|^{2d'}\|\bar x_\arm\|^2\,\|r^{(\arm)}\|_2^2,
                        \label{eq:pop-one-swap-L2}
            \end{align}
            where we also used $\|r^{(\arm)}\|_1\le \sqrt{n}\,\|r^{(\arm)}\|_2$.

            \item 
            \textbf{Increment of the sample‑analog term $\widehat{\sfR}^{[d']}_\arm$.}
           Recall
            \[
                \widehat{\sfR}^{[d']}_\arm=\frac{1}{n_\arm}\sum_{i\in\cS_\arm}\xi_i^{[d']}(n_\arm;X)\cdot \hat \res^{(\arm)}_i,
            \]
            where the weights $\xi^{[d']}_i(n_\arm;X)$ depends only on $(n_\arm,n,X)$, not on $\Sa$.
            Let $\xi\in\RR^n$ collect these weights.
            Observe that $\hat r^{(\arm)}_{\cS_\arm}=M_\arm(\cS_\arm)\,S_\arm r^{(\arm)}$ with the arm‑wise residual‑maker $M_\arm = I_{n_{\arm}} - Z_{\arm}( Z_{\arm}^{\top} Z_{\arm})^{-1} Z_{\arm}^{\top}$ where $Z_{\arm} = \begin{bmatrix} \bone_{n_{\arm}} & X_{\arm}\end{bmatrix}$; $M_{\arm}$ is an orthogonal projection, and hence $\opnorm{ M_\arm } \le1$. 
            A swap changes both the index set and the projection:
            \begin{align}
            \Delta^{(J\leftrightarrow K)}\widehat{\sfR}^{[d']}_\arm
            &= \frac{1}{n_\arm}\Big(\xi_K\,\hat r^{(\arm)}_{K}(\cS'_\arm)-\xi_J\,\hat r^{(\arm)}_{J}(\cS_\arm)\Big)
            \ +\ \frac{1}{n_\arm}\sum_{i\in\cS_\arm\cap\cS'_\arm}\xi_i\Big(\hat r^{(\arm)}_i(\cS'_\arm)-\hat r^{(\arm)}_i(\cS_\arm)\Big).\nonumber
            \end{align}
            By Cauchy–Schwarz inequality and $\|\hat r^{(\arm)}(\cS)\|_2\le \|r^{(\arm)}\|_2$,
            \begin{equation}\label{eq:samp-one-swap-L2}
            \big|\Delta^{(J\leftrightarrow K)}\widehat{\sfR}^{[d']}_\arm\big|
            \ \leq\ \frac{2}{n_\arm}\,\|\xi\|_2\,\|r^{(\arm)}\|_2,
            \qquad\Rightarrow\qquad
            \bbE \Bigl[\big(\Delta^{(J\leftrightarrow K)}\widehat{\sfR}^{[d']}_\arm\big)^2 \,\Bigm|\, \cS_\arm\Bigr]
             \leq \frac{4}{n_\arm^2}\,\|\xi\|_2^2\,\|r^{(\arm)}\|_2^2.
            \end{equation}
                        
        \end{itemize}

        Since $F_{\arm,d'}=(\widehat{\sfR}^{[d']}_\arm-\sfR^{[d']}_\arm)$, combining \eqref{eq:pop-one-swap-L2}–\eqref{eq:samp-one-swap-L2} together with the inequality $(a+b)^2\le 2a^2+2b^2$, and taking total expectation yields
        \begin{align}
            \bbE \left[\big(\Delta^{(J\leftrightarrow K)}F_{\arm,d'}\big)^2 \right]
                &\le 2\,\bbE \left[\big(\Delta^{(J\leftrightarrow K)}\widehat{\sfR}^{[d']}_\arm\big)^2 \right]
                 +2\,\bbE \left[\big(\Delta^{(J\leftrightarrow K)}\sfR^{[d']}_\arm\big)^2 \right]\nonumber\\
                &\lesssim 
                    \frac{p}{n_\arm^2} \Big(\|\Delta_\arm\|^{2d'}\|u_\arm\|^2 + d'^2\,\|\Delta_\arm\|^{2d'-2}\|\bar x_\arm\|^2\|u_\arm\|^2\Big)
                        + \frac{p}{n_\arm^3}\,\|\Delta_\arm\|^{2d'}\|\bar x_\arm\|^2\,\|r^{(\arm)}\|_2^2   \nonumber\\
                &\qquad + \frac{1}{n_\arm^2} \|\xi\|_2^2\,\|r^{(\arm)}\|_2^2.
                \label{eq:oneswap-L2}
        \end{align}
        On the high-probability event $\Omega_\delta$ of Assumption~\ref{assumption:rates}, we can replace $\|\Delta_\arm\|,\|u_\arm\|,\|\bar x_\arm\|$ by their deterministic envelopes $\alpha_{n,p}(\delta),\beta_{n,p}(\delta),\gamma_{n,p}(\delta)$.
        We retain $\|\xi\|_2$ explicitly for now (it will be handled in Step~3-b).

        \item 
        \emph{Step 3-b) Variance bound via Poincar\'{e} inequality.} 
        Apply Lemma~\ref{lem:ES-swap} to the random subset $\cS_\arm$ (of size $n_\arm$) with $F(\cS_{\arm})\equiv F_{\arm,d'}$:
        \begin{equation}\label{eq:ES-apply-final}
            \Var(F_{\arm,d'}) \leq \frac{n_{\arm}(n-n_{\arm})}{2n} \bbE\Big[\big(\Delta^{(J\leftrightarrow K)}F_{\arm,d'}\big)^2\Big]
                \lesssim\ n\ \bbE\Big[\big(\Delta^{(J\leftrightarrow K)}F_{\arm,d'}\big)^2\Big],
        \end{equation}
        where we use $n_\arm\asymp n$ in the last step (complete randomization with $\rho_n\to\rho\in(0,1)$).
        
        Plugging \eqref{eq:oneswap-L2} into \eqref{eq:ES-apply-final}, on the event $\Omega_{\delta}$, 
        \begin{align*}
            \Var(F_{\arm,d'})
            &\lesssim
            \underbrace{\frac{1}{n}\Big(\alpha_{n,p}(\delta)^{2d'}\,\beta_{n,p}(\delta)^2
            + d'^2\,\alpha_{n,p}(\delta)^{2d'-2}\,\gamma_{n,p}(\delta)^2\,\beta_{n,p}(\delta)^2\Big)}_{=:V_{\rm pop}}
            + \underbrace{\frac{p}{n^2}\ \alpha_{n,p}(\delta)^{2d'}\,\gamma_{n,p}(\delta)^2\ \frac{\|r^{(\arm)}\|_2^2}{n}}_{=:V_{\rm mix}}\\
            &\qquad + \underbrace{\frac{\|\xi\|_2^2}{n}\ \frac{\|r^{(\arm)}\|_2^2}{n}}_{=:V_{\rm samp}}.
        \end{align*}
        \begin{enumerate}
            \item 
            The first term $V_{\rm pop}$ stems from population–side increments and is $O(1/n)$, given Assumption \ref{assumption:rates}.
            \item 
            For $V_{\rm mix}$, we use the premise $\sup_n \|r^{(\arm)}\|_2^2/n <\infty$ to conclude $V_{\rm mix}=O \big(p/n^2\big)$.
            \item 
            For $V_{\rm samp}$, we can use the crude bound $\|\xi\|_2^2\le \sum_{i=1}^n \xi_i^{[d']}(n_\arm;X)^2 < \infty$ that depends only on $(n,p,X,d')$. 
            In particular, for $d'=0$, Example~\ref{example:exp_k.0} gives 
            $\xi^{[0]}_i(n_\arm;X)$ proportional to $h_i-\frac{p}{n}$ with $h_i=\|x_i\|^2/n$, hence
            $\|\xi\|_2^2 \lesssim \sum_{i=1}^n (h_i-\tfrac{p}{n})^2 \le \sum_{i=1}^n h_i^2 = \tr(H_X^2)\le \tr(H_X)=p$.
            For fixed $d'\ge 1$ the (finite) design weight map $X\mapsto \xi^{[d']}(n_\arm;X)$ is linear in the column‑space statistics of $X$ (Section~\ref{sec:design_expectation}); thus $\|\xi\|_2^2\lesssim c_{d'}\,p$ with a constant $c_{d'}$ depending only on $d'$ and $\rho$.\footnote{This follows by expanding $\xi^{[d']}$ in the orthonormal basis induced by $H_X$ and bounding its coefficients uniformly in $n$ and $p$ for fixed $d'$. The coarse envelope $\|\xi\|_2^2\lesssim p$ suffices for our variance purpose.}
            Consequently,
            \[
                V_{\rm samp}\ \lesssim\ \frac{p}{n}\cdot \frac{\|r^{(\arm)}\|_2^2}{n}\ \to\ \frac{p}{n}\cdot v_\arm.
            \]
        \end{enumerate}

        Putting the three pieces together, for each fixed $d'$
        \begin{equation}\label{eq:VarE-final-b}
            \Var(F_{\arm,d'})
                \lesssim
                \frac{1}{n}\Big(\alpha_{n,p}(\delta)^{2d'}\,\beta_{n,p}(\delta)^2
                + d'^2\,\alpha_{n,p}(\delta)^{2d'-2}\,\gamma_{n,p}(\delta)^2\,\beta_{n,p}(\delta)^2\Big)
                + \frac{p}{n}\cdot \frac{\|r^{(\arm)}\|_2^2}{n}\ +\ \frac{p}{n^2}.
        \end{equation}
        Since $\|r^{(\arm)}\|_2^2/n\to v_\arm$, the last two terms are $O(p/n)$ and $O(p/n^2)$, respectively. In particular, under the mild dimension growth $p=o(n)$, we have $\Var(F_{\arm,d'})=o(1/n)$.

        \item 
        \emph{Step 3-c) Conclusion via Chebyshev inequality.} 
        By Chebyshev’s inequality, 
        \[
            F_{\arm,d'}=o_P(n^{-1/2})\qquad\text{for each fixed $d'$ when }p=o(n).
        \]
        Summing over a fixed number of degrees $d' = 0, 1, \dots, d$ and both arms $\arm \in \{0,1\}$,
        \[
            \sum_{d'=0}^{d}\Big[(\widehat{\sfR}^{[d']}_1-\sfR^{[d']}_1)-(\widehat{\sfR}^{[d']}_0-\sfR^{[d']}_0)\Big]
             = o_P(n^{-1/2}).
        \]
    \end{itemize}

    \paragraph{Step 4: Conclusion of the proof.}
    Recall the master decomposition \eqref{eq:masterC}:
    \[
        \htauols^{[d]}-\tau
            =\hresdim 
            +\sum_{d'=0}^{d}\Big[(\widehat{\sfR}^{[d']}_1-\sfR^{[d']}_1)-(\widehat{\sfR}^{[d']}_0-\sfR^{[d']}_0)\Big]
            -\big(\sfT^{[d]}_1-\sfT^{[d]}_0\big).
    \]
    Step~2 gives the geometric envelope for the Neumann tail:
    \[
        \big|\sfT^{[d]}_1-\sfT^{[d]}_0\big|
            \leq 2\,\frac{\alpha_{n,p}(\delta)^{\,d+1}}{1-\alpha_{n,p}(\delta)}\,\beta_{n,p}(\delta)\,\gamma_{n,p}(\delta)
            = \varepsilon_{n,p}^{[d]}(\delta),
    \]
    on the event $\Omega_\delta$ with probability at least $1-\delta$.
    Step~3 shows that the stochastic error in the correction terms is $o_P(n^{-1/2})$ provided $p=o(n)$.
    Therefore,
    \[
        \htauols^{[d]}-\tau = \hresdim + \Delta_{n,p}^{[d]} + o_P(n^{-1/2}),
    \]
    with $\Delta_{n,p}^{[d]}$ satisfying the claimed high‑probability tail bound. This proves Theorem~\ref{thm:main-OLS}.
\end{proof}

\section{Additional experiments}\label{sec:additional_experiments}

Here we present additional experiments deferred from Section~\ref{sec:experiments}. 
More extensive studies are left for future work.

\paragraph{Conventions.}
We continue to set $n=500$, $n_1=150$, and in each outer repetition we draw $N=2000$ treatment assignments; we repeat the entire procedure $R=50$ times and report the median across repetitions with the $10\%$-$90\%$ region shaded. 
Performance metrics follow Section~\ref{sec:experiments}: (i) \emph{normalized absolute bias} $\big|\frac{1}{N}\sum_{k=1}^N \hat\tau_k-\tau\big|\cdot \frac{\sqrt{n}}{\sigma_n}$ and (ii) \emph{normalized empirical variance} $\Var(\hat\tau_1,\dots,\hat\tau_N)\cdot \frac{n}{\sigma_n^2}$, where $\sigma_n^2 = n_1^{-1}\|r^{(1)}\|_2^2 + n_0^{-1}\|r^{(0)}\|_2^2 - n^{-1}\|r^{(1)}-r^{(0)}\|_2^2$ with $r^{(a)}$ the \emph{population} OLS residuals. 
We also report \emph{normalized RMSE} (nRMSE) defined by 
\[
    \mathrm{nRMSE} \coloneqq \sqrt{\big(\text{normalized bias}\big)^2 + \big(\text{normalized variance}\big)}\,,
\]
which combines the two normalized components on a common scale.

\paragraph{Residual structures: Typical vs worst}
We report results for \emph{typical} i.i.d.\ residuals per \ref{item:typical}
Under i.i.d.\ residuals, $\hat\tau_{\mathrm{OLS}}$ already has small normalized bias and Neumann corrections yield modest additional gains (Figure~\ref{figure:experiment.typical}). 
Under worst‑case residuals, corrections have larger effects: even degrees tend to reduce normalized bias (with higher variance), whereas odd degrees often tilt the trade‑off in the opposite direction (cf.\ Figure~\ref{figure:experiment.1}).

\begin{figure}[t]
    \centering
    \includegraphics[width=0.32\linewidth]{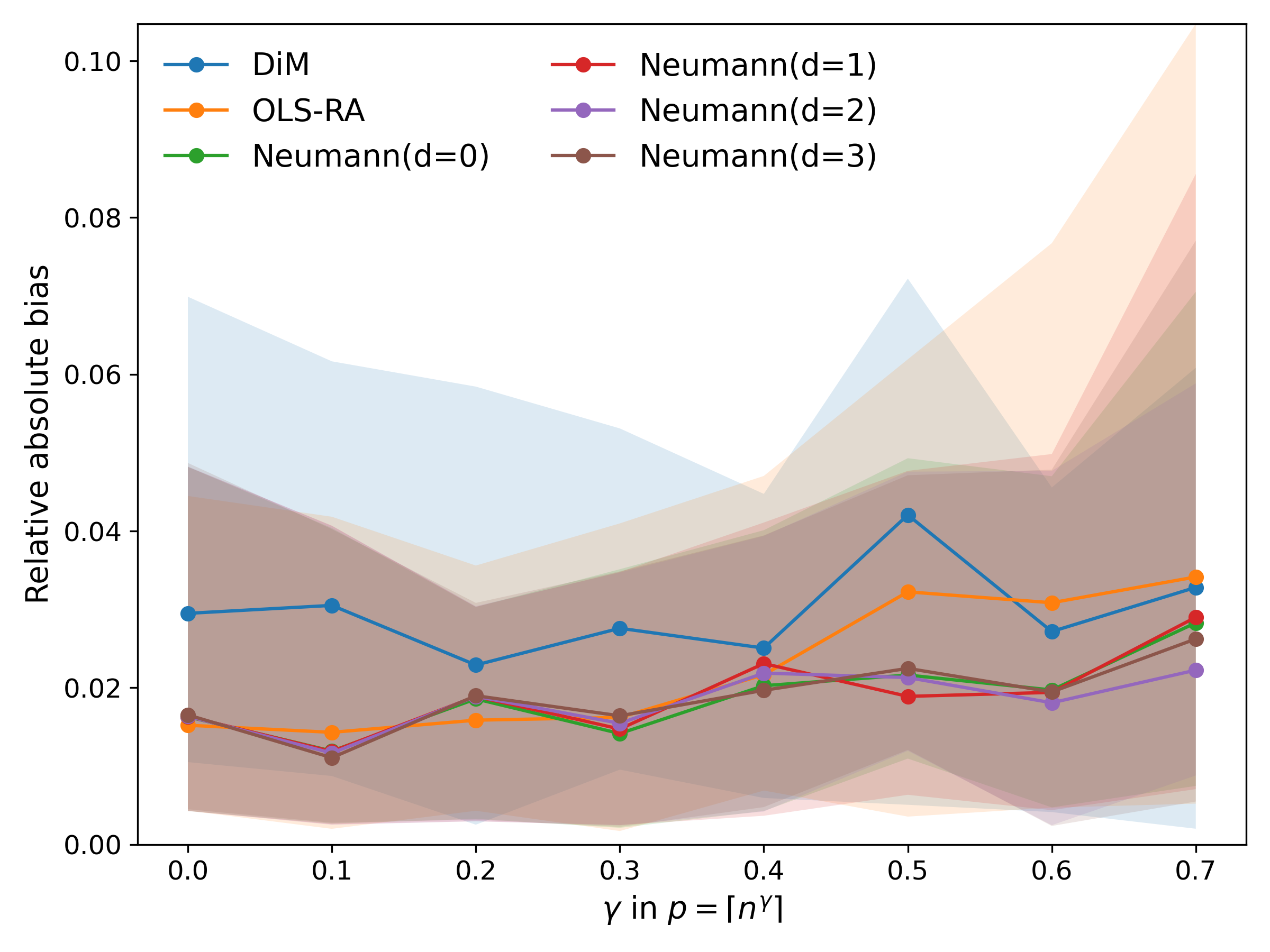}\hfill
    \includegraphics[width=0.32\linewidth]{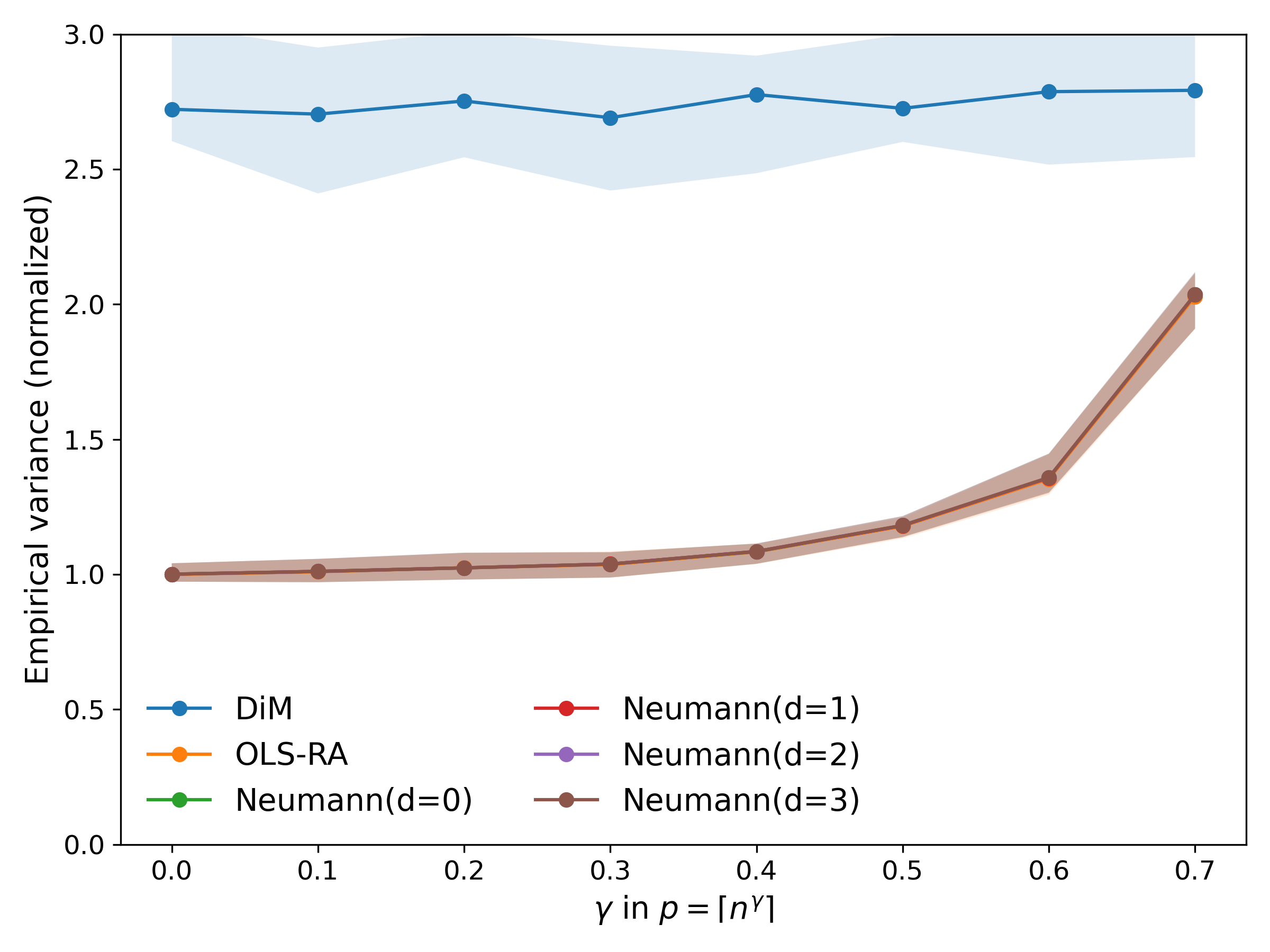}\hfill
    \includegraphics[width=0.32\linewidth]{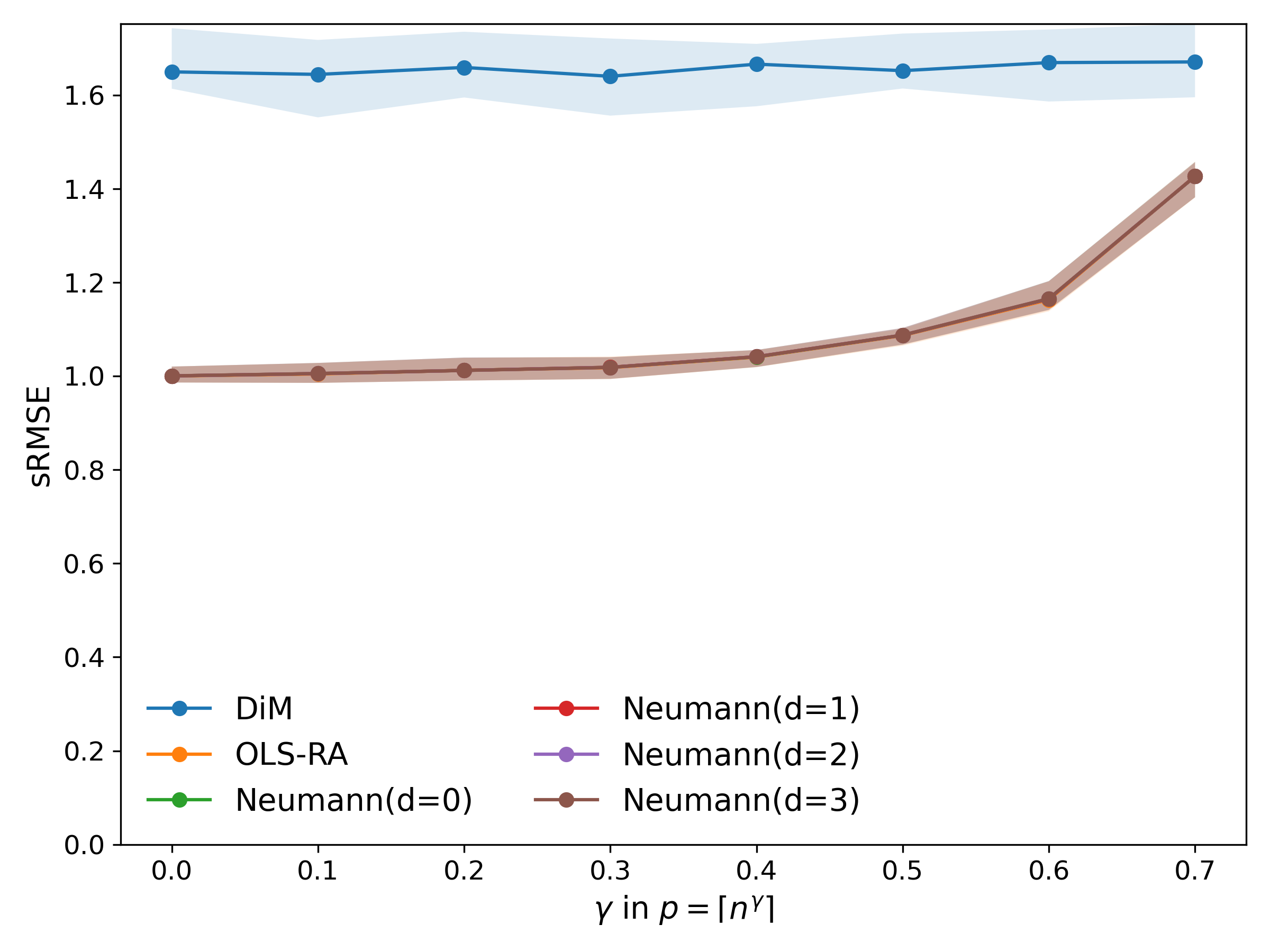}
    \caption{
        Comparison of $\hat\tau_{\mathrm{OLS}}$ versus $\hat\tau_{\mathrm{OLS}}^{[d]}$ for $d \in \{0,1,2,3\}$  under the "typical i.i.d. residuals" (cf. Section~\ref{sec:experiments}) at $n = 500$, $n_1 = 150$, $N = 250$, $R=25$ (median with $10\%$-$90\%$ region shaded).
        \textbf{(Left):} Normalized absolute bias. 
        \textbf{Middle:} Normalized empirical variance.
        \textbf{(Right):}  Normalized RMSE.
        In this setting, corrections provide only modest improvements because the baseline bias is already small.
        } 
    \label{figure:experiment.typical}
\end{figure}

\end{document}